\newcommand{\RR}{\mathbb{R}}
\newcommand{\OO}{\mathscr{O}}
\newcommand{\ZZ}{\mathbb{Z}}
\newcommand{\MM}{\mathscr{M}}
\newcommand{\Cfield}{\mathbb{C}}
\newcommand{\mm}{\mathfrak{m}}
\newcommand{\ep}{\varepsilon}
\newcommand{\Spec}{\textnormal{Spec}\,}
\newcommand{\image}{\textnormal{im}\,}
\newcommand{\kernel}{\textnormal{ker}\,}
\newcommand{\cokernel}{\textnormal{coker}\,}
\newcommand{\degree}{\textnormal{deg}\,}
\newcommand{\HHom}{\mathscr{H}om} 
\newcommand{\Hom}{\textnormal{Hom}}
\newcommand{\dimension}{\textnormal{dim}\,}
\newcommand{\rank}{\textnormal{rank}\,}
\newcommand{\Ext}{\textnormal{Ext}}
\newcommand{\Aut}{\textnormal{Aut}}
\newcommand{\al}{\alpha}
\newcommand{\cone}{\textnormal{cone}}
\newcommand{\Coh}{\textnormal{Coh}}
\newcommand{\Ap}{\mathcal{A}^p}
\newcommand{\arinj}{\ar@{^{(}->}}
\newcommand{\arsurj}{\ar@{->>}}
\newcommand{\areq}{\ar@{=}}
\newcommand{\iotam}{{\iota_m}}
\newcommand{\Lo}{\overset{L}{\otimes}}
\newcommand{\HH}{\mathcal{H}}
\newtheorem{theorem}{Theorem}[section]
\newtheorem{lemma}[theorem]{Lemma}
\newtheorem{coro}[theorem]{Corollary}
\newtheorem{pro}[theorem]{Proposition}
\newtheorem{definition}[theorem]{Definition}
\begin{document}
    \title{Moduli of PT-semistable objects II}

\author{Jason Lo}
\address{Department of Mathematics \\ Building 380\\Stanford University \\ Stanford CA 94305}
\curraddr{Department of Mathematics \\  202 Mathematical Sciences Building \\ University of Missouri \\ Columbia MO 65211}
\email{locc@missouri.edu}
\thanks{This paper and its predecessor \cite{Lo1} grew out of my doctoral thesis  at Stanford University.  I would like to thank my thesis advisor, Jun Li, for his patience and constant encouragement while I worked on this project.  I  also thank Arend Bayer for generously providing comments and suggestions as this work took shape.  For the  useful conversations, I thank Brian Conrad, Young-Hoon Kiem, Ravi Vakil and Ziyu Zhang.  For graciously answering my questions at various points in time, I thank Dan Edidin, Max Lieblich, Amnon Neeman, Alexander Polishchuk and Yukinobu Toda.  Lastly, I want to thank the referee for invaluable comments that helped improve the exposition.}
\subjclass[2010]{Primary 14F05, 14D20, 18E30, 14J60; Secondary 14J30}

\keywords{PT-stability, semistable reduction, derived category, moduli, valuative criterion}

\begin{abstract}
        We generalise the techniques of semistable reduction for flat families of sheaves to the setting of the derived category $D^b(X)$ of coherent sheaves on a smooth projective three-fold $X$.  Then we construct the moduli of PT-semistable objects in $D^b(X)$ as an Artin stack of finite type that is universally closed.  In the absence of strictly semistable objects, we construct the moduli as a proper algebraic space of finite type.
\end{abstract}

\maketitle

\section{Introduction}

In this paper, we continue our study of PT-semistable objects and the construction of their moduli spaces, using the results established in \cite{Lo1}.  In \cite{Lo1}, boundedness of the moduli of PT-semistable objects was established.  We also showed that the stack of objects in the heart used for PT-stability is universally closed, and proved a series of technical lemmas that will now be applied.

Using semistable reduction in the derived category, we now show that PT-semistability is an open property for a flat family of complexes.  This  enables us to construct Artin stacks of PT-semistable objects that are of finite type and universally closed.  When there are no strictly semistable objects, we construct proper algebraic spaces of finite type parametrising PT-stable objects.  Overall, we not only have moduli stacks of objects of any Chern classes in the derived category on three-folds, but also offer a perspective of higher-rank analogues of stable pairs studied in \cite{PT} (see also \cite[Proposition 6.1.1]{BayerPBSC}).

In precise terms, our main theorem is:

\begin{theorem}\label{theorem-main}
Let $(X,H)$ be a polarised smooth projective three-fold over $k$.
\begin{enumerate}
\item The PT-semistable objects on $X$ of any fixed Chern character form a universally closed Artin stack of finite type.  The stack is a substack of Lieblich's Artin stack of universally gluable complexes - see \cite{Lieblich}.
\item When there are no strictly semistable objects, the PT-semistable objects on $X$ of any fixed Chern character form an algebraic space of finite type that satisfies the valuative criterion for properness for an arbitrary discrete valuation ring.  The algebraic space is a subfunctor of Inaba's algebraic space of simple complexes - see \cite{Inaba}.
    \end{enumerate}
\end{theorem}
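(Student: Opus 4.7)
The plan is to realize the moduli as an open substack of Lieblich's stack, use boundedness to cut it down to finite type, and then verify the valuative criterion via a Langton-style semistable reduction argument in the derived category. The essential preliminary is openness of PT-semistability: given a flat family $E$ over $X \times S$ whose fibers lie in $\Ap$, I would show that the locus $\{s \in S : E_s \text{ is PT-semistable}\}$ is Zariski-open in $S$. This would combine the technical lemmas on destabilizing sub- and quotient objects established in \cite{Lo1} with a constructibility and specialization argument applied to the potential destabilizers in the relative setting.

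For part (1), once openness is in hand, the PT-semistable locus carves out an open substack of Lieblich's stack of universally gluable complexes; its finite type follows by intersecting with the bounded family of PT-semistable objects of fixed Chern character produced in \cite{Lo1}. Universal closedness then reduces to the existence part of the valuative criterion: given a discrete valuation ring $R$ with fraction field $K$ and a PT-semistable object $E_K$ on $X_K = X \times \Spec K$, one must (possibly after a finite extension of $R$) produce an extension $\wt{E}_R$ over $X_R$ whose central fiber is also PT-semistable. I would first extract some extension as an object in $\Ap$ using the universal closedness of the stack of $\Ap$-objects from \cite{Lo1}; if the central fiber is not yet PT-semistable, a Langton-style modification along its maximal destabilizing quotient yields a new candidate extension, and a decreasing invariant (such as a lexicographic comparison of the Hilbert polynomials of the destabilizers) will force the procedure to terminate in finitely many steps.

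For part (2), in the absence of strictly semistable objects a PT-stable object is automatically simple, so the moduli functor factors through Inaba's algebraic space of simple complexes. Existence of limits is inherited from part (1); uniqueness follows from the standard argument that a comparison map between two PT-stable central fibers of equal Chern character must be either zero or an isomorphism, giving separatedness and hence the full valuative criterion for properness over an arbitrary DVR.

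The main obstacle will be the Langton-style iteration in the derived-categorical setting. Classical Langton relies on saturation of subsheaves inside a torsion-free sheaf; in our context the analogous modifications are distinguished triangles in $D^b(X_R)$, and preserving $R$-flatness and membership in $\Ap$ along the iteration requires the octahedral axiom together with delicate homological bookkeeping. The technical lemmas in \cite{Lo1} on how $\Ap$-objects behave under short exact sequences and how their destabilizing data vary in families will supply the inductive machinery; verifying that each modification strictly decreases the chosen invariant is the linchpin of the argument.
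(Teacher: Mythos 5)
Your overall architecture matches the paper's: openness of PT-semistability via constructibility plus stability under generisation, an open substack of Lieblich's stack cut down to finite type by boundedness from \cite{Lo1}, the valuative criterion via elementary modifications in $D^b(X_R)$, and separatedness in the stable case by the zero-or-isomorphism argument. However, there is a genuine gap at what you yourself identify as the linchpin: you assert that each elementary modification strictly decreases an invariant (a lexicographic comparison of Hilbert polynomials of the destabilizers), so that the iteration terminates in finitely many steps. This is not what happens, either in Langton's original argument or in the derived-category version. The phase $\phi(B^i)$ of the maximal destabilizing subobject only \emph{weakly} decreases (Corollary \ref{Bi1leqBi}), and since the possible values do not form a well-ordered set under strict descent in any useful way, the correct statement is that the invariant eventually becomes \emph{constant}. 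The paper's Proposition \ref{infiniteemBGconstant} shows that if the process never terminates, then for $i \gg 0$ the destabilizers $B^i$ and quotients $G^i$ stabilize and the central fibres split as $B \oplus G$. The contradiction is then extracted not from descent of an invariant but from a substantial further construction: one builds an inverse system of quotients $\tilde{Q}^m$ on the infinitesimal thickenings $X_m$, proves each $\tilde{Q}^m$ is flat over $\Spec R/\pi^m$ (the Postnikov-system and double-complex computations of Lemmas \ref{Qmflatness-lemma2}--\ref{flatness-2mimplies2m-1}), algebraizes the system to an object $Q_R$ over the completion using Lieblich's lifting result, and shows the resulting map $I^0 \to Q_R$ restricts to a destabilizing quotient of the generic fibre $E_K$, contradicting its semistability. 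None of this is present in your outline, and without it the termination claim is unsupported.

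A secondary omission: the algebraization step forces one to work first over a \emph{complete} DVR, and the paper then deduces the criterion for an arbitrary DVR from the fact that the moduli is already known to be an Artin stack of finite type (via \cite{LMB}), or alternatively under a coprimality hypothesis on $ch_0, ch_1$ using base-change results for PT-semistability (Lemma \ref{PTsspullbackinheart} and Proposition \ref{pro-PTssbasechange}). Your proposal does not address the passage from complete to arbitrary $R$, and your parenthetical ``possibly after a finite extension of $R$'' is not needed in the paper's argument and does not substitute for it.
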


Although many of the arguments here are written down only for a particular polynomial stability, namely PT-stability on three-folds, the same proofs apply to Gieseker stability for sheaves if one replaces our t-structure by the standard t-structure.  The techniques in this paper should also work for a wider class of stability conditions, and on higher-dimensional varieties.

\subsection{Related Work}

Semistable reduction for sheaves is originally due to Langton \cite{Langton}, while polynomial stabilities were first defined by Bayer \cite{BayerPBSC}.  In the case of smooth projective three-folds $X$, no examples of Bridgeland stability conditions on $D^b(X)$ have been constructed for an arbitrary $X$.  As approximations of Bridgeland stability conditions on three-folds, Bayer \cite{BayerPBSC} and Toda \cite{TodaLSOp} independently came up with the notions of \textit{polynomial stability} and \textit{limit stability}, respectively.

In Bayer's paper, he introduced a class of polynomial stability conditions on normal projective varieties, which includes Toda's limit stability (in fact, Toda's stability acts as a wall in the wall-crossing in \cite{BayerPBSC}) as well as Gieseker stability for sheaves.  One of the main results in Bayer \cite[Proposition 6.1.1]{BayerPBSC} states that, for objects in the heart $\Ap = \langle \Coh_{\leq 1}(X), \Coh_{\geq 2}(X)[1]\rangle$ with $ch=(-1,0,\beta,n)$ and trivial determinant, the stable objects with respect to a particular polynomial stability function are precisely the stable pairs described in Pandharipande and Thomas's work \cite{PT}; for this reason, this particular stability function is called the \textit{PT-stability function} in \cite{BayerPBSC}.  The moduli space of such stable pairs has been constructed by Le Potier \cite{LP} using geometric invariant theory (GIT) and shown to be projective.   In general, however, it is not  clear how to apply GIT to objects in the derived category, because very different-looking complexes can be isomorphic in the derived category.

In Toda's paper \cite{TodaLSOp}, he showed that the moduli space of limit-stable objects in $\Ap$ of $ch=(-1,0,\beta,n)$ and trivial determinant on a  Calabi-Yau three-fold is a separated algebraic space of finite type \cite[Theorem 3.20]{TodaLSOp}.

There were earlier examples of moduli spaces of  objects in the derived category that satisfy the valuative criterion for properness.  For example, in Abramovich and Polishchuk's work \cite{AP}, they showed that the valuative criteria for separatedness and properness for Bridgeland-stable objects hold, under the assumption that the heart of t-structure in the stability condition is \textit{Noetherian}, which $\Ap$ is not.  (The techniques in \cite{AP} were generalised in \cite{Polishchuk}.)  On the other hand, Arcara-Bertram-Lieblich constructed projective moduli spaces of rank-zero Bridgeland-stable derived objects on surfaces with trivial canonical bundle \cite{BSMSKTS}.  We note that the idea of elementary modifications for objects in the derived category  has already appeared in \cite{BSMSKTS}.

In the case of K3 and abelian surfaces $X$, Toda showed in \cite{TodaK3} that for stabilities $\sigma$ lying in a particular connected component of the space $\text{Stab}(X)$ of Bridgeland stability conditions, the moduli of semistable objects with respect to $\sigma$ of any given numerical type and phrase is an Artin stack of finite-type.

\subsection{Notation}

The notations in this paper are the same as those in \cite{Lo1}.  For convenience, we provide a summary of the notations we use.  More details can be found in \cite{Lo1}.

Throughout this paper, $k$ will be an algebraically closed field of characteristic 0.  And $R$ will denote a discrete valuation ring (DVR), not necessarily complete, with uniformiser $\pi$ and field of fractions $K$.  Unless specified, $X$ will always denote a smooth projective three-fold over $k$.

We will write $X_R := X \otimes_k R$, and $X_K := X \otimes_R K$.  For any integer $m \geq 1$, let $X_m := X \otimes_k R/\pi^m$, and let
$$ \iota_m : X_m \hookrightarrow X_R$$
denote the closed immersion.  We will often write $\iota$ for $\iota_1$, and $X_k$ for the central fibre of $X_R$.  For integers $1 \leq m' < m$, let
$$\iota_{m,m'} :  X_{m'} \hookrightarrow X_m$$ denote the closed immersion.  We also write
$$ j : X_K \hookrightarrow X_R$$
for the open immersion.

Note that the pushforward functor $\iota_\ast : \Coh (X_k) \to \Coh (X_R)$ is exact, while the pullback $\iota^\ast : \Coh (X_R) \to \Coh (X)$ is  right-exact.  Similarly for the pushforward ${\iota_{m,m'}}_\ast$.  On the other hand, $j^\ast : \Coh (X_R) \to \Coh (X_K)$ is exact.

For a Noetherian scheme $Y$, we will always write $\mbox{Kom} (Y)$ for the category of chain complexes of coherent sheaves on $Y$, $D^b(Y)$ for the bounded derived category of coherent sheaves, and $D(Y)$ for the unbounded derived category of coherent sheaves.  If $Y$ is of dimension $n$, then for any integer $0 \leq d \leq n$, we define
\begin{align*}
  \Coh_{\leq d}(Y) &= \{ E \in \Coh (Y) : \dimension \text{Supp}\, E \leq d \} \\
  \Coh_{\geq d+1}(Y) &= \{ E \in \Coh (Y) : \Hom_{\Coh (Y)} (F,E)=0 \text{ for all } F \in \Coh_{\leq d}(Y) \}.
\end{align*}
Then $(\Coh_{\leq d}(Y),\Coh_{\geq d+1}(Y))$ is a torsion pair in the abelian category $\Coh (Y)$.  For $0 \leq d' < d$,  we  form the quotient category $\Coh_{d,d'}(Y) := \Coh_{\leq d}(Y)/\Coh_{\leq d'-1} (Y)$, which is an abelian category.  For a coherent sheaf $F$ on $Y$, we write $p(F)$ for its reduced Hilbert polynomial, and if $F \in \Coh_{\leq d}(Y)$, we write $p_{d,d'}(F)$ for its reduced Hilbert polynomial as an element of $\Coh_{d,d'}(Y)$ (see \cite[Section 1.6]{HL}).

For $m \geq 1$, tilting with respect to the torsion pair $( \Coh_{\leq 1}(X_m), \Coh_{\geq 2}(X_m))$ in $\Coh (X_m)$ gives a t-structure with heart
 \begin{align*}
 \Ap_m &:= \Ap (X_m) \\
  &:= \langle \Coh_{\leq 1}(X_m), \Coh_{\geq 2}(X_m)[1]\rangle \\
&= \{ E \in D^b(X) : H^0(E) \in \Coh_{\leq 1}(X), H^{-1}(E) \in \Coh_{\geq 2}(X), \\
&\hspace{5cm} H^i(E)=0 \text{ for all } i\neq 0, -1\}.
 \end{align*}
 on $D^b(X_m)$ (and in fact, on $D(X_m)$ as well - see \cite[Proposition 5.1]{Lo1}).  The truncation functors associated to this t-structure will be denoted by $\tau^{\leq 0}_{\Ap_m}, \tau^{\geq 0}_{\Ap_m}$, and the cohomology functors denoted by $\HH^i_{\Ap_m}$.  We will drop the subscripts when the context is clear.  On any Noetherian scheme $Y$, the cohomology functors with respect to the standard t-structure on $D(Y)$ will always be denoted by $H^i$.  On $X_K$, let $\Ap_K (X_K)$ or $\Ap_K$ denote the heart $\langle \Coh_{\leq 1}(X_K), \Coh_{\geq 2}(X_K)[1]\rangle$.

We will use $D^{\leq 0}_{\Ap_m}, D^{\geq 0}_{\Ap_m}$ to denote the full subcategories of $D(X_m)$
\begin{align*}
  D^{\leq 0}_{\Ap_m} &= \{ E \in D(X_m) : \HH^i(E)=0 \text{ for all } i >0 \} \\
  &= \{ E \in D(X_m) : H^i(E)=0 \text{ for all } i >1, H^0(E) \in \Coh_{\leq 1}(X_m) \}, \\
  D^{\geq 0}_{\Ap_m} &= \{ E \in D(X_m) : \HH^i(E)=0 \text{ for all } i<0 \} \\
  &= \{ E \in D(X_m) : H^i(E)=0 \text{ for all }i<-2, H^{-1}(E) \in \Coh_{\geq 2}(X_m) \}.
\end{align*}

In summary, we have the following maps between the various schemes:
\begin{equation*}
  \iota_{m,m'} : X_{m'} \hookrightarrow X_m, \quad  \iota_m : X_m \to X_R,\quad  j : X_K \hookrightarrow X_R
\end{equation*}
and associated pullback and pushforward functors
\begin{align*}
  \iota_{m,m'}^\ast &: D(X_m) \to D(X_{m'}), \quad {\iota_{m,m'}}_\ast : D(X_{m'}) \to D(X_m) \\
  \iota_m^\ast &: D(X_R) \to D(X_m), \quad \iotam_\ast : D(X_m) \to D(X_R) \\
  j^\ast &: D(X_R) \to D(X_K).
\end{align*}
Since ${\iota_{m,m'}}_\ast$ is exact, it takes $\Ap_{m'}$ into $\Ap_m$.

Since $\iota_m$ is a closed immersion, it is a projective morphism.  Hence we have the adjoint pair $L\iota_m^\ast \dashv \iotam_\ast$, i.e.\ $L\iota_m^\ast$ is the left adjoint, and $\iotam_\ast$ the right adjoint \cite[p.83]{FMTAG}.  Similarly, we have the adjoint pair $ L\iota_{m,m'}^\ast \dashv {\iota_{m,m'}}_\ast$ for any $1\leq m' < m$.

Consistent with the definitions introduced in \cite{AP} and \cite{BSMSKTS}, we will use the following notion of flatness for derived objects:

\begin{definition}
Let $S$ be a Noetherian scheme over $k$, and $X$ a smooth projective three-fold over $k$.  We say an object $E \in D^b(X \times S)$ is a flat family of objects in $\Ap$ over $S$ if, for all closed points $s \in S$, we have
\[
E|_s := L\iota_s^\ast E \in \Ap (X|_s) = \langle \Coh_{\leq 1}(X|_s), \Coh_{\geq 2}(X|_s)[1]\rangle
\]
where $\iota_s : X|_{s} \hookrightarrow X \times S$ is the closed immersion of the fibre over $s$.
\end{definition}

Given a complex $E^\bullet \in D^b(X)$, we say $E^\bullet$ is of dimension $d$ if the dimension of the support of $E^\bullet$, defined to be the union of the supports of the various cohomology $H^i(E^\bullet)$, is $d$.

\subsection{Stability Conditions}

Let $(X,H)$ be a smooth projective variety of dimension $n$ with polarisation $H$.  For any coherent sheaf $F$ on $X$, we  define its degree (with respect to $H$) as $\degree (F) = \int_X c_1(F)\cdot c_1(H)^{n-1}$, and its slope as $\mu (F) = \frac{\degree (F)}{\rank (F)}$.

Polynomial stability was defined on $D^b(X)$ by Bayer for any normal projective variety $X$ \cite[Theorem 3.2.2]{BayerPBSC}. The particular class of polynomial stability conditions we will concern ourselves with for the rest of the paper consists of the following data, where $X$ is a smooth projective three-fold:
\begin{enumerate}
\item the heart $\Ap = \langle \Coh_{\leq 1}(X), \Coh_{\geq 2}(X)[1]\rangle$, and
\item a group homomorphism (the central charge) $Z : K(X) \to \Cfield [m]$ of the form
$$ Z(E)(m) = \sum_{d=0}^3 \int_X  \rho_d H^d  \cdot ch(E) \cdot U \cdot m^d$$
where
\begin{enumerate}
\item the $\rho_d \in \Cfield$ are nonzero and satisfy $\rho_0, \rho_1 \in \mathbb{H}$, $\rho_2, \rho_3 \in -\mathbb{H}$, and $\phi (-\rho_2) > \phi (\rho_0) > \phi (-\rho_3) > \phi (\rho_1)$ (see Figure \ref{figure-PTstab} below),
\item $H \in \text{Amp}(X)_\RR$ is an ample class, and
\item $U =1+U_1 + U_2 + U_3\in A^\ast (X)_\RR$ where $U_i \in A^i(X)$.
\end{enumerate}
\end{enumerate}

\begin{figure*}[h]
\centering
\setlength{\unitlength}{1mm}
\begin{picture}(50,40)
\multiput(0,15)(1,0){50}{\line(1,0){0.5}}
\multiput(25,0)(0,1){40}{\line(0,1){0.5}}
\put(25,15){\vector(-4,1){15}}
\put(2.5,18.6){$-\rho_2$}
\put(25,15){\vector(-1,1){13}}
\put(7.5,28){$\rho_0$}
\put(25,15){\vector(1,2){10}}
\put(30,36){$-\rho_3$}
\put(25,15){\vector(3,1){14}}
\put(39.5,20){$\rho_1$}
\end{picture}
\caption{Configuration of the $\rho_i$ for PT-stability conditions}
\label{figure-PTstab}
\end{figure*}
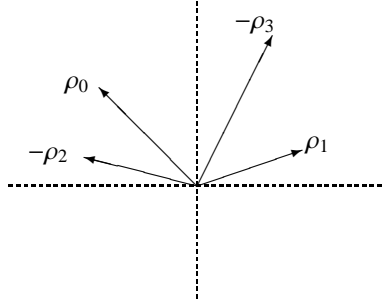

The configuration of the $\rho_i$ is compatible with the heart $\Ap$, in the sense that for every nonzero $E \in \Ap$, we have $Z(E)(m) \in \mathbb{H}$ for $m \gg 0$.  So there is a unqiuely determined function $\phi (E)(m)$ (strictly speaking, a uniquely determined function germ) such that
\[
 Z(E)(m) \in \mathbb{R}_{>0} e^{i \pi \phi (E)(m)} \text{ for all } m \gg 0.
 \]
This allows us to define the notion of semistability on objects.  We say that a nonzero object $E$ is \textit{$Z$-semistable} (resp.\ \textit{$Z$-stable}) if for any nonzero subobject $G \hookrightarrow E$ in $\Ap$, we have $\phi (G)(m) \leq \phi (E)(m)$ for $m \gg 0$ (resp.\ $\phi (G)(m) < \phi (E)(m)$ for $m \gg 0$).  We also write $\phi (G) \prec \phi (E)$ (resp.\ $\phi (G) \preceq \phi (E)$) to denote this.  Harder-Narasimhan filtrations for polynomial stability functions exist \cite[Section 7]{BayerPBSC}.

By \cite[Proposition 6.1.1]{BayerPBSC}, with respect to any polynomial stability function from the class above, the stable objects in $\Ap$ with $ch= (-1,0,\beta,n)$ and trivial determinant are exactly the stable pairs in Pandharipande and Thomas' paper \cite{PT}, which are 2-term complexes of the form $$[\OO_X \overset{s}{\to} F]$$ where $F$ is a pure 1-dimensional sheaf and $s$ has 0-dimensional cokernel.  For this reason, and in line with calling a stability function as above a \textit{PT-stability function} in \cite{BayerPBSC}, we call any polynomial stability condition satisfying the above requirements a \textit{PT-stability condition}, and any nonzero object in $\Ap$ semistable (resp.\ stable) with respect to it \textit{PT-semistable} (resp.\ \textit{PT-stable}).

\section{Semistable Reduction in the Derived Category}

Let us first recall how semistable reduction proceeds for flat family of sheaves  in \cite{Langton}.

Suppose $E \in \Coh (X_R)$ is a flat family of sheaves on a projective variety $X$ over the base $\Spec R$, where $R$ is a DVR over $k$.  Suppose that the generic fibre $E_K$ is $\mu$-semistable, while the central fibre $E_k$ is not.  Then there is a unique maximal destabilising quotient sheaf $E_k \twoheadrightarrow Q_0$ in the category $\Coh (X_k)$.  If we write $I^0 := E$, we can define $I^1$ to be the kernel of the composition $E \twoheadrightarrow E_k \twoheadrightarrow Q_0$, which is another flat family of sheaves on $X$ over $\Spec R$.  This process of going from the family $I^0$ to the family $I^1$ via the short exact sequence
\[ 0 \to I^1 \to I^0 \to Q_0 \to 0 \]
in $\Coh (X_R)$ is called an elementary modification.

We can now look at the central fibre of $I^1$ to see if it is $\mu$-semistable.  If it is not, we can perform another elementary modification to obtain a flat family $I^2$ over $\Spec R$, and so on.  It is the content of \cite[Theorem (2)]{Langton} (see also \cite[Theorem 2.B.1]{HL}) that this process will terminate after a finite number of steps, giving us a flat family of $\mu$-semistable sheaves over $\Spec R$.  Our aim here is to show that this phenomenon happens in the more general setting of the derived category.

\subsection{Elementary Modifications}

The process of elementary modification works not only for flat families of coherent sheaves, but also for flat families of complexes in the derived category when we fix a t-structure:

\begin{pro}[Elementary Modification in the Derived Category]\label{em}
Let $R$ be a DVR over $k$.  Given a flat family $I \in D^b(X_R)$ of objects in $\Ap$ over $\Spec R$, and a surjection $L\iota^\ast I \overset{\al}{\twoheadrightarrow} Q$ in $\Ap$, there exists a flat family $J$ of objects in $\Ap$ such that its generic fibre coincides with that of $I$, and we have an exact sequence in $\Ap$
\[
0 \to Q \to L\iota^\ast J \to L\iota^\ast I \to Q \to 0.
\]
\end{pro}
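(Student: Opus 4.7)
The plan is to construct $J$ as a shifted cone, using the adjunction $L\iota^\ast \dashv \iota_\ast$. Via this adjunction, the surjection $\al : L\iota^\ast I \twoheadrightarrow Q$ in $\Ap$ corresponds to a morphism $\tilde{\al} : I \to \iota_\ast Q$ in $D^b(X_R)$, and we define $J \in D^b(X_R)$ by the distinguished triangle
\[
J \to I \xrightarrow{\tilde{\al}} \iota_\ast Q \to J[1].
\]
The statement about the generic fibre is then immediate: applying $j^\ast$ kills $\iota_\ast Q$ (since $\iota$ and $j$ have disjoint images in $X_R$), which gives $j^\ast J \simeq j^\ast I$.

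For the claim on the central fibre, the next step is to compute $L\iota^\ast \iota_\ast Q$. Since $\pi$ is a regular element of $R$, the exact sequence $0 \to \OO_{X_R} \xrightarrow{\pi} \OO_{X_R} \to \iota_\ast \OO_{X_k} \to 0$ provides a length-two locally free resolution of $\iota_\ast \OO_{X_k}$, and tensoring with $\iota_\ast Q$ (on which $\pi$ acts as zero) produces $L\iota^\ast \iota_\ast Q \simeq Q \oplus Q[1]$ in $D^b(X_k)$. In particular, the only nonzero $\Ap$-cohomology sheaves of $L\iota^\ast \iota_\ast Q$ are $\HH^0 = Q$ and $\HH^{-1} = Q$.

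Applying $L\iota^\ast$ to the defining triangle for $J$ and passing to the long exact sequence in $\Ap$-cohomology, the hypothesis that $I$ is a flat family gives $\HH^i(L\iota^\ast I) = 0$ for $i \neq 0$. The crucial point is the identification of the induced map $\HH^0(L\iota^\ast I) \to \HH^0(L\iota^\ast \iota_\ast Q) = Q$ with $\al$: this follows from the triangle identity for the adjunction $L\iota^\ast \dashv \iota_\ast$, using that the counit $L\iota^\ast \iota_\ast Q \to Q$ realises the projection $Q \oplus Q[1] \to Q$ on $\HH^0$. Since $\al$ is surjective, the long exact sequence then forces $\HH^i(L\iota^\ast J)$ to vanish for all $i \neq 0$, so $L\iota^\ast J \in \Ap$ and $J$ is a flat family over $\Spec R$. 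The residual terms of the same long exact sequence assemble into precisely the 4-term exact sequence in the statement.

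The main obstacle I expect is the computation of $L\iota^\ast \iota_\ast Q$ together with the recognition of the induced map on $\HH^0$ as $\al$ itself; once these are in place, the rest is a straightforward diagram chase through the long exact sequence in $\Ap$-cohomology, and in particular the injectivity of $Q \hookrightarrow L\iota^\ast J$ falls out of the vanishing $\HH^{-1}(L\iota^\ast I) = 0$.
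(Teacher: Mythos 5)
Your construction is exactly the paper's: $J$ is the cone (shifted) of the adjoint morphism $I \to \iota_\ast Q$, the splitting $L\iota^\ast\iota_\ast Q \simeq Q \oplus Q[1]$ comes from the Koszul resolution of $\OO_{X_k}$ over $\OO_{X_R}$ (the paper cites this as \cite[Corollary 5.8]{Lo1}), and the surjectivity of the induced map on $\HH^0$ is established via the triangle identity for the adjunction $L\iota^\ast \dashv \iota_\ast$, just as in the paper (which merely unpacks your one-line counit argument into the two steps that $\HH^0(L\iota^\ast\ep)$ is an isomorphism and $\HH^0(L\iota^\ast\iota_\ast\al)$ is surjective). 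The proof is correct and essentially identical in approach.
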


\begin{proof}
Suppose we have a surjection
\[
 L\iota^\ast I \overset{\al}{\twoheadrightarrow} Q
\]
in $\Ap$.  Then $\iota_\ast \al$ is a morphism in $D^b(X_R)$, and we can define $J \in D^b(X_R)$ by the exact triangle

\[
\def\objectstyle{\scriptstyle}
\def\labelstyle{\scriptstyle}
\xymatrix@R=1.5pc{
   J \ar[rrr]& & & I \ar[dl]^\ep \\
  & & \iota_\ast L\iota^\ast I \ar[dl]^{\iota_\ast \al} & \\
  & \iota_\ast Q \ar[uul]^{[1]}
}
\]
where $\ep$ is the adjunction map.

Applying $L\iota^\ast$ to the above exact triangle, we get another exact triangle

\[
\def\objectstyle{\scriptstyle}
\def\labelstyle{\scriptstyle}
\xymatrix@R=1.5pc{
   L\iota^\ast J \ar[rrr]& & & L\iota^\ast I \ar[dl]^{L\iota^\ast\ep} \\
  & & L\iota^\ast \iota_\ast L\iota^\ast I \ar[dl]^{L\iota^\ast\iota_\ast \al} & \\
  & L\iota^\ast \iota_\ast Q \ar[uul]^{[1]}
} .
\]

Taking the long exact sequence of cohomology with respect to the heart $\Ap$ yields
\[
0 \to  \mathcal{H}^{-1}_{\Ap} (L\iota^\ast J) \to 0  \to Q \to \mathcal{H}^0_{\Ap} (L\iota^\ast J) \to  L\iota^\ast I \overset{\al'}{\to}   Q \to \mathcal{H}^1_{\Ap} (L\iota^\ast J) \to 0
\]
where $\al' := \mathcal{H}^0_{\Ap}((L\iota^\ast \iota_\ast \al) \circ (L\iota^\ast \ep))$.

Now, if we write $\eta'$ for the adjunction map $L\iota^\ast \iota_\ast (L\iota^\ast I) \to L\iota^\ast I$, then the composition
\[
L\iota^\ast I \overset{L\iota^\ast \ep}{\longrightarrow} L\iota^\ast \iota_\ast L\iota^\ast I \overset{\eta'}{\to} L\iota^\ast I
\]
is in fact the identity map on $L\iota^\ast I$ \cite[equation (1.5.8), p.29]{CS}.  So $\HH^0 (\eta' \circ L\iota^\ast \ep)$ is the identity map on $\HH^0 (L\iota^\ast I)$.  On the other hand, using  the following exact triangle from Corollary \cite[Corollary 5.8]{Lo1}
\[
L\iota^\ast I [1] \to L\iota^\ast \iota_\ast L\iota^\ast I  \overset{\eta'}{\to} L\iota^\ast I \to L\iota^\ast I[2],
\]
we see that $\HH^0 (\eta')$ is also an isomorphism.  Hence $\HH^0 (L\iota^\ast \ep)$ itself is an isomorphism.

 Also, since we have a morphism of functors $\eta : L\iota^\ast \circ \iota_\ast \to \text{id}_{D(X_m)}$ \cite[equation (1.5.5), p.28]{CS}, we have the commutative diagram
\[
\xymatrix{
L\iota^\ast \iota_\ast L\iota^\ast I \ar[r]^(.6){\eta_{L\iota^\ast I}} \ar[d]^{L\iota^\ast \iota_\ast \al} & L\iota^\ast I \ar[d]^\al \\
L\iota^\ast \iota_\ast Q \ar[r]^{\eta_Q} & Q
}
\]
which induces a morphism of exact triangles

\[
\xymatrix{
L\iota^\ast I [1] \ar[d] \ar[r] & L\iota^\ast \iota_\ast L\iota^\ast I \ar[r] \ar[d]^{L\iota^\ast \iota_\ast \al} & L\iota^\ast I \ar[d]^\al \ar[r] & L\iota^\ast I [2] \ar[d] \\
Q[1] \ar[r] & L\iota^\ast \iota_\ast Q \ar[r] & Q \ar[r] & Q[2]
}.
\]

If we apply the functor $\HH^0$ to the above diagram, the left-most and right-most columns would vanish, while the middle horizontal maps become isomorphisms by Lemma \cite[Lemma 5.6(a)]{Lo1}.  Then, since $\al  =\HH^0(\al)$ is surjective by hypothesis, we get surjectivity of $L\iota^\ast \iota_\ast \al$.  This, combined with the fact that $\HH^0(L\iota^\ast \ep)$ is an isomorphism, which we just proved, implies $\al'$ is surjective.  Consequently, $L\iota^\ast J = \mathcal{H}^0_{\Ap} (L\iota^\ast J) \in \Ap$, meaning that $J$ is again a family of objects in $\Ap$ over $\Spec R$.
This completes the proof of the proposition.
\end{proof}

Given a nonzero object $E \in \Ap$, we can consider its HN filtration with respect to PT-stability (or, indeed, any polynomial stability)
\[
  0 \neq E_0 \hookrightarrow E_1 \hookrightarrow \cdots \hookrightarrow E_n = E
\]
where each factor $E_i/E_{i-1}$ is PT-semistable, and $\phi (E_i/E_{i-1}) \succeq \phi (E_{i+1}/E_i)$ for all $i$. Recall that the left-most factor $E_0$ in the HN filtration is called the \textit{maximal destabilising subobject}, and that it satisfies the following property \cite[Lemma 3.1]{Lo1}: for any $F \hookrightarrow E$ in $\Ap$ such that $\phi (F) \succeq \phi (E_0)$, we have $\phi (F)=\phi (E_0)$ and $F\hookrightarrow E_0$.

For semistable reduction in the derived category, we start with a flat family $I^0=I$ of objects in $\Ap$ over $\Spec R$ where $R$ is any DVR over $k$.  For any $i \geq 0$, if $L\iota^\ast I^i$ is not semistable in $\Ap$, then the proposition above says we can define a new flat family $I^{i+1}$ by the exact triangle in $D^b(X_R)$
\[
 I^{i+1} \to I^i \to \iota_\ast G^i \to I^{i+1}[1] \to \cdots
\]
where $G^i$ is the cokernel of a maximal destabilising subobject $B^i \hookrightarrow L\iota^\ast I^i$.  We also know from the proof of the proposition that we have the short exact sequence in $\Ap$
\begin{equation}\label{eqn-ssrfourterm}
0 \to G^i \to L\iota^\ast I^{i+1} \to L\iota^\ast I^i \to G^i \to 0,
\end{equation}
which we can break up into the two short exact sequences in $\Ap$
\begin{align}
  0 \to B^i \to & L\iota^\ast I^i \to G^i \to 0, \label{ses1}\\
  0 \to G^i \to &L\iota^\ast I^{i+1} \to B^i \to 0. \label{ses2}
\end{align}
We will call $I^m$ the family obtained from $I^0$ after $m$ elementary modifications.

Suppose the above process never lets us arrive at a semistable central fibre, i.e.\ suppose for all $i \geq 0$, we have a maximal destabilising subobject $B^i \hookrightarrow L\iota^\ast I^i$ such that $\phi (B^i)  \succ \phi (L\iota^\ast I^i)$.  Then we prove an analogue of \cite[Lemma 1]{Langton}:

\begin{lemma}
Using the notation above, for any $0 \neq E \subseteq L\iota^\ast I^{i+1}$, we have $\phi (E) \preceq \phi (B^i)$.  If equality holds, then $E \cap G^i =0$.
\end{lemma}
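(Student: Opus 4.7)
The plan is to imitate Langton's original sheaf argument in this derived setting, exploiting the extension \eqref{ses2} of $B^i$ by $G^i$ and the seesaw property of polynomial stability.

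First I would take any nonzero subobject $E \hookrightarrow L\iota^\ast I^{i+1}$ in $\Ap$ and form the short exact sequence in $\Ap$
\[
0 \to E \cap G^i \to E \to E' \to 0,
\]
where $E'$ denotes the image of the composition $E \hookrightarrow L\iota^\ast I^{i+1} \twoheadrightarrow B^i$ obtained from \eqref{ses2}; in particular, $E' \hookrightarrow B^i$. Since $B^i$ is the first HN factor of $L\iota^\ast I^i$, it is PT-semistable, and therefore any nonzero such $E'$ satisfies $\phi(E') \preceq \phi(B^i)$.

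Next I would establish the key auxiliary claim: every nonzero subobject $F \hookrightarrow G^i$ satisfies $\phi(F) \prec \phi(B^i)$. Pulling back $F$ along the surjection in \eqref{ses1} yields a subobject $F' \hookrightarrow L\iota^\ast I^i$ containing $B^i$ strictly, sitting in $0 \to B^i \to F' \to F \to 0$. As $F' \neq B^i$, it cannot embed into $B^i$ as a subobject of $L\iota^\ast I^i$, so the maximality property of $B^i$ recorded in \cite[Lemma 3.1]{Lo1} forbids $\phi(F') \succeq \phi(B^i)$. Hence $\phi(F') \prec \phi(B^i)$, and the seesaw inequality applied to this sequence then forces $\phi(F) \prec \phi(B^i)$.

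Finally, applying the seesaw once more to $0 \to E \cap G^i \to E \to E' \to 0$ places $\phi(E)$ between $\phi(E \cap G^i)$ and $\phi(E')$: if $E \cap G^i = 0$ then $\phi(E) = \phi(E') \preceq \phi(B^i)$, while if $E \cap G^i \neq 0$ the two inequalities $\phi(E \cap G^i) \prec \phi(B^i)$ and $\phi(E') \preceq \phi(B^i)$ force $\phi(E) \prec \phi(B^i)$. Either way $\phi(E) \preceq \phi(B^i)$, and equality excludes the second alternative, giving $E \cap G^i = 0$. The only point needing care is the seesaw principle for the germ-valued phase $\phi(-)(m)$, but since $Z(-)(m) \in \mathbb{H}$ for $m \gg 0$ on every term of our sequences, the additivity of $Z$ places the argument of the middle term between those of the outer terms pointwise in $m \gg 0$, yielding the required inequality of germs.
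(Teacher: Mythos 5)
Your proof is correct and follows essentially the same route as the paper: you decompose $Z(E)$ as the sum of the charge of the image of $E$ in $B^i$ (which is exactly the paper's $D = (E\vee G^i)/G^i$) and the charge of $E\cap G^i$, bound the first using the semistability of $B^i$, and bound the second by pulling back to a subobject of $L\iota^\ast I^i$ containing $B^i$ and invoking the maximality property of $B^i$ from \cite[Lemma 3.1]{Lo1}, finishing with the seesaw. The only difference is organizational: by establishing the strict inequality $\phi(F)\prec\phi(B^i)$ for every nonzero $F\subseteq G^i$ up front, you dispatch both assertions of the lemma at once, whereas the paper first derives the non-strict bound and then reruns essentially the same argument to handle the equality case.
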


Given objects $A, B, I$ in the abelian category $\Ap$ such that $A,B \subset I$, we can construct objects $A \vee B$ and $A \cap B$ satisfying the following three properties:
\begin{itemize}
\item $A \vee B$ is a subobject of $I$.
\item We have an exact sequence
\[
 0 \to A \cap B \to A \oplus B \to A \vee B \to 0.
\]
\item We have a commutative diagram
\[
\xymatrix{
 A \arinj[r] & A \vee B \\
 A \cap B \arinj[u] \arinj[r] & B \arinj[u]
}.
\]
\end{itemize}

\begin{proof}[Proof of lemma.]
Take any $0\neq E \subset L\iota^\ast I^{i+1}$.  Let $E \vee G^i$ denote the following fibred product of subobjects of $L\iota^\ast I^{i+1}$ in $\Ap$:
\begin{equation*}
\xymatrix{
 E \arinj[r]^{j_1}  & E \vee G^i \\
E \cap G^i \ar@{^{(}->}[u] \ar@{^{(}->}[r] & G^i \arinj[u]^{j_2}
}.
\end{equation*}
We have the exact sequence
\begin{equation*}
0 \to E \cap G^i \to E \oplus G^i \to E \vee G^i \to 0 \text{\quad in $\Ap$}
\end{equation*}
and so we have
\[
  Z(E)(m) + Z(G^i)(m)  = Z ( E \vee G^i)(m) + Z(E \cap G^i)(m).
\]

From $L\iota^\ast I^{i+1}/G^i \cong B^i$, we get that $E\vee G^i$ (since it contains $G^i$) corresponds to some subobject $D \subseteq B^i$, and $E \vee G^i / G^i \cong D$.  Hence $Z(D) = Z (E \vee G^i) - Z (G^i) = Z(E) - Z(E \cap G^i)$. We have $\phi (D) \preceq \phi (B^i)$ by the semistability of $B^i$.

Since $E \cap G^i \subseteq G^i \cong L\iota^\ast I^i/B_i$, the object $E \cap G^i$ corresponds to an object $\tilde{E}$ such that $B^i \subseteq \tilde{E} \subset L\iota^\ast I^i$, and $\tilde{E}/B^i \cong E \cap G^i$.  We have the short exact sequenece
\begin{equation}\label{eqn-BitildeEE}
0 \to B^i \to \tilde{E} \to E\cap G^i \to 0
\end{equation}
where $\phi (\tilde{E}) \preceq \phi (B^i)$ since $B^i$ is defined to be the maximal destabilising subobject.  Then by the seesaw principle, $\phi (B^i) \succeq \phi (\tilde{E}) \succeq \phi  (E\cap G^i)$.  Then $\phi (E \cap G^i) \preceq \phi (B^i)$ and $\phi (D) \preceq \phi (B^i)$ together imply $\phi (E) \preceq \phi (B^i)$.

Now suppose $0 \neq E \subseteq L\iota^\ast I^{i+1}$ satisfies $\phi (E) = \phi (B^i)$.  Let $\tilde{E}$ be as above. If $E \subseteq G^i$, then $E \cap G^i = E$, and from \eqref{eqn-BitildeEE} we get $\phi (\tilde{E})= \phi (B^i)$.  Hence there is an injection $\tilde{E} \hookrightarrow B^i$ since $B^i$ is a maximal destabilising subobject.  This implies $B^i = \tilde{E}$, and so $E=E \cap G^i=0$ from \eqref{eqn-BitildeEE}, contradicting $E$ being nonzero.  Thus we must have $E \nsubseteq G^i$.

We saw above that $Z(E\cap G^i) + Z(D) = Z(E)$ and $\phi (D) \preceq \phi (B^i)$.  By assumption, we also have $\phi (E)= \phi (B^i)$.  These force $\phi (E \cap G^i) \succeq \phi (B^i)$.  On the other hand, $E\cap G^i \subseteq G^i \subseteq L\iota^\ast I^{i+1}$, and so by the first part of the proposition we have $\phi (E \cap G^i) \preceq \phi (B^i)$.  Hence $\phi (E\cap G^i) = \phi (B^i)$, which forces $E\cap G^i=0$ by the argument in the previous paragraph.
\end{proof}

\begin{coro}\label{Bi1leqBi}
$\phi (B^{i+1}) \preceq \phi (B^i)$, with equality only if $B^{i+1} \cap G^i = 0$.
\end{coro}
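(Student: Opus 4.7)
The plan is that this corollary is essentially an immediate specialisation of the preceding lemma to the distinguished subobject $E = B^{i+1}$, so the work is to verify the hypotheses and unpack the consequence, not to do new computation.

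First I would observe that $B^{i+1}$ is, by construction, the maximal destabilising subobject of $L\iota^\ast I^{i+1}$ with respect to PT-stability, and in particular is a nonzero subobject of $L\iota^\ast I^{i+1}$ in $\Ap$. Thus $E := B^{i+1}$ satisfies the standing hypothesis $0 \neq E \subseteq L\iota^\ast I^{i+1}$ of the lemma. Applying the lemma to this choice of $E$ gives directly the first assertion $\phi(B^{i+1}) \preceq \phi(B^i)$.

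Next, still assuming $\phi(B^{i+1}) = \phi(B^i)$, I would invoke the equality clause of the lemma, which states that whenever $\phi(E) = \phi(B^i)$ one has $E \cap G^i = 0$. Setting $E = B^{i+1}$ again yields $B^{i+1} \cap G^i = 0$, which is the second assertion.

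There is no real obstacle here; the only thing worth double-checking is that $B^{i+1}$ is genuinely defined as a subobject of $L\iota^\ast I^{i+1}$ in $\Ap$ (which is exactly what the notation $B^i \hookrightarrow L\iota^\ast I^i$ in the set-up preceding the lemma records), so that the fibred-product construction $B^{i+1} \vee G^i$ used in the proof of the lemma makes sense inside $L\iota^\ast I^{i+1}$. Once this is noted, the corollary follows by direct citation of the lemma.
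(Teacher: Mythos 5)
Your proposal is correct and is exactly the intended argument: the paper states the corollary without proof as an immediate consequence of the preceding lemma, obtained by taking $E = B^{i+1}$, which is a nonzero subobject of $L\iota^\ast I^{i+1}$ because the standing assumption is that each central fibre fails to be semistable and so has a nonzero maximal destabilising subobject. Your verification of the hypotheses and the unpacking of the equality clause match what the paper leaves implicit.
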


Suppose $X$ is a smooth projective three-fold over $k$.  In showing the valuative criterion for universal closedness for PT-semistable objects, we need to take a PT-semistable object $I^0_K \in \Ap_K$ with $ch(L\iota^\ast I^0_K)=(-r,-d,\beta,n)$ where $r>0$.  Then we need to extend $I^0_K$ to an $R$-flat family $I^0 \in D^b(X_R)$ of objects in $\Ap$, and then apply semistable reduction to $I^0$.  We now consider the properties the central fibre $L\iota^\ast I^0$ can have when we extend $I^0_K$ to $I^0$.

\begin{pro}\label{pro-coh31ssreduction}
Let $X$ be a smooth projective three-fold over $k$.  If $E_K$ is a torsion-free sheaf on $X_K$ that is semistable in $\Coh_{3,1}(X_K)$, then there is an $R$-flat coherent sheaf $E$ on $X_R$ such that $j^\ast E = E_K$, and $\iota^\ast E$ is a torsion-free sheaf semistable in $\Coh_{3,1}(X_k)$.
\end{pro}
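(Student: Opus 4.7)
The plan is to carry out Langton's semistable-reduction algorithm (cf.\ \cite[Theorem 2.B.1]{HL}) in the quotient abelian category $\Coh_{3,1}(X_k)$.  First, extend $E_K$ to an $R$-flat coherent sheaf $E^0$ on $X_R$: any coherent subsheaf of $j_\ast E_K$ restricting to $E_K$ on the generic fibre works, and after quotienting out $\pi$-torsion we obtain an $R$-flat coherent $E^0$ with $j^\ast E^0 \cong E_K$.

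The iteration is as follows.  If $\iota^\ast E^i$ is torsion-free and $\Coh_{3,1}(X_k)$-semistable, stop.  Otherwise, choose a surjection $\iota^\ast E^i \twoheadrightarrow Q^i$ in $\Coh(X_k)$ that simultaneously absorbs any subsheaf of $\iota^\ast E^i$ supported in dimension $\leq 2$ and realises the quotient by the maximal destabilising subobject in $\Coh_{3,1}(X_k)$, so that $B^i := \ker(\iota^\ast E^i \twoheadrightarrow Q^i)$ has the largest possible $p_{3,1}$.  Define $E^{i+1} := \ker(E^i \to \iota_\ast Q^i)$; a standard argument (as in the proof of \cite[Theorem 2.B.1]{HL}) gives $R$-flatness of $E^{i+1}$ and $j^\ast E^{i+1} \cong E_K$.

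The heart of the proof is to show that this process terminates after finitely many steps.  Following Langton's strategy, we prove the analogue of \cite[Lemma 1]{Langton} in $\Coh_{3,1}(X_k)$: the reduced Hilbert polynomials $p_{3,1}(B^i)$ form a non-increasing sequence that must eventually stabilise, at which point the $B^i$ and $Q^i$ assemble into an $R$-flat subfamily whose generic fibre is a destabilising subobject of $E_K$ in $\Coh_{3,1}(X_K)$, contradicting $\Coh_{3,1}(X_K)$-semistability of $E_K$.  Torsion-freeness of the limiting $\iota^\ast E$ is then automatic, since any torsion of dimension $\leq 2$ arising at an intermediate stage would have been absorbed into the corresponding $Q^i$ by our choice.

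The principal obstacle is the monotonicity and termination argument in the quotient category $\Coh_{3,1}$: one must check that the classical sheaf-theoretic estimates on reduced Hilbert polynomials and their limits behave well modulo $\Coh_{\leq 0}(X_k)$, and that the descending chain of subsheaves of $E^i$ does indeed produce a coherent destabilising limit over the generic fibre.  This is a technical but essentially formal adaptation of the Huybrechts--Lehn treatment, using that both $\Coh(X_k)$ and $\Coh_{3,1}(X_k)$ are Noetherian abelian categories.
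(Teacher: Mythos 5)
There is a genuine gap: your termination argument cannot remove the $0$-dimensional torsion, and that is the actual content of the proposition. Semistable reduction in $\Coh_{3,1}(X_k)$ is literally \cite[Theorem 2.B.1]{HL} applied to the quotient category, and the paper simply cites it as its first step; after that step the central fibre is semistable in $\Coh_{3,1}$, hence pure in dimensions $1$ and $2$, but may still carry a $0$-dimensional torsion subsheaf $T$, because $\Coh_{\leq 0}(X_k)$ is invisible in $\Coh_{3,1}(X_k)$. Your proposed measure of progress, the non-increasing sequence $p_{3,1}(B^i)$, is computed in the quotient category and therefore does not change at all when $B^i$ is enlarged by $0$-dimensional sheaves; once the fibre is $\Coh_{3,1}$-semistable there is no destabilising subobject left, so the contradiction you aim for (a destabilising subobject of $E_K$ in $\Coh_{3,1}(X_K)$) is unavailable for the residual torsion, and the chain of modifications aimed at it has no reason to terminate under your bookkeeping. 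There is also a directional slip in your setup: you place the torsion in the kernel $B^i$, but in Langton's modification the kernel reappears as a \emph{quotient} of the new central fibre (the sequence is $0\to Q^i\to \iota^\ast E^{i+1}\to B^i\to 0$), so ``absorbing'' torsion into $B^i$ does not delete it; your closing claim that torsion is absorbed into $Q^i$ contradicts your own construction.

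The paper handles the residue with a second, separate modification process: writing $T$ for the (necessarily $0$-dimensional) torsion of $\iota^\ast E$, one sets $E':=\ker\bigl(E\to\iota_\ast(\iota^\ast E/T)\bigr)$, i.e.\ one modifies along the torsion-free quotient rather than along a destabilising quotient. The four-term Tor sequence $0\to\iota^\ast E/T\to\iota^\ast E'\to\iota^\ast E\to\iota^\ast E/T\to 0$ shows the new torsion $T'$ meets the torsion-free subsheaf $\iota^\ast E/T$ trivially and hence injects into $T$; the torsion subsheaves thus form a decreasing chain of $0$-dimensional sheaves, which either reaches $0$ or stabilises, and in the latter case the argument of \cite[Theorem 2.B.1]{HL} produces a nonzero $0$-dimensional subsheaf of $E_K$ --- contradicting torsion-freeness of $E_K$, not its semistability. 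To repair your proof you need this second phase with its own termination invariant; monotonicity of $p_{3,1}(B^i)$ alone does not suffice.
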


\begin{proof}
Take any $R$-flat coherent sheaf $E$ on $X_R$ that restricts to $E_K$ on $X_K$.  By \cite[Theorem 2.B.1]{HL}, replacing $E$ by a subsheaf if necessary, we can assume that $\iota^\ast E$ is also semistable in $\Coh_{3,1}$.  Suppose $\iota^\ast E$ is not torsion-free.  Then it has a maximal torsion subsheaf $T \subset \iota^\ast E$, which is necessarily 0-dimensional.  And we can run semistable reduction on the family $E$; the details are as follows:

Define $E'$ as the kernel of the composition $E \twoheadrightarrow \iota_\ast \iota^\ast E \twoheadrightarrow \iota_\ast (\iota^\ast E/T)$.  Pulling back the short exact sequence $0 \to E' \to E \to \iota_\ast (\iota^\ast E/T) \to 0$ to $X_k$ and taking the long exact sequence of Tor, we get
\[
0 \to \iota^\ast E/T  \to \iota^\ast E' \to \iota^\ast E \to \iota^\ast E/T \to 0.
\]
Let $T'$ be the maximal torsion subsheaf of $\iota^\ast E'$.  Suppose $T'$ is nonzero.  Then we must have $(\iota^\ast E/T )\cap T' = 0$, or else $\iota^\ast E/T$ would have a torsion subsheaf, contradicting the maximality of $T$.  Continuing this process, we obtain a sequence of $R$-flat coherent sheaves $\cdots \to E^{(n)} \to E^{(n-1)} \to \cdots \to E' \to E$, such that if $T^{(n)}$ denotes the maximal torsion subsheaf of $\iota^\ast E^{(n)}$, then $T^{(n)} \hookrightarrow T^{(n-1)}$ for all $n$.  If we do not arrive at some $n$ with $T^{(n)}=0$, then because all the $T^{(n)}$ are 0-dimensional, we have $T^{(n)} \cong T$ for some $T$ for $n \gg 0$.  Then, by the same argument as in the proof of \cite[Theorem 2.B.1]{HL}, this produces a nonzero 0-dimensional subsheaf of $E_K$, a contradiction.  The proposition follows.
\end{proof}

The following is a strengthening of the $d=0$ case of \cite[Theorem 4.1]{Lo1}:

\begin{pro}\label{pro-Lo1theorem41improv}
Let $X$ be a smooth projective three-fold over $k$.  Given any object
\[
  E_K \in \langle \Coh_{\leq 0}(X_K),\Coh_{\geq 3}(X_K)[1]\rangle \subset D^b(X_K),
\]
there exists a 3-term complex $\widetilde{E}^\bullet$ of $R$-flat coherent sheeaves with $R$-flat cohomology on $X_R$ such that:
\begin{itemize}
\item the generic fibre $j^\ast (\widetilde{E}^\bullet) \cong E_K$ in $D^b(X_K)$;
\item for the central fibre $L\iota^\ast (\widetilde{E}^\bullet)$, the cohomology $H^0(L\iota^\ast (\widetilde{E}^\bullet))$ is 0-dimensional, and the cohomology $H^{-1}(L\iota^\ast (\widetilde{E}^\bullet))$ is semistable in $\Coh_{3,1}(X)$.
\end{itemize}
\end{pro}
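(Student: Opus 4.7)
The plan is to combine the existence statement from \cite[Theorem 4.1]{Lo1} with a Langton-style iteration on the $H^{-1}$ cohomology sheaf. First, I would apply the $d=0$ case of \cite[Theorem 4.1]{Lo1} directly to $E_K$, producing an initial 3-term complex $\widetilde{E}^\bullet_0$ of $R$-flat coherent sheaves with $R$-flat cohomology on $X_R$ whose generic fibre is $E_K$ and whose central fibre $L\iota^\ast \widetilde{E}^\bullet_0$ lies in the heart $\langle \Coh_{\leq 0}(X_k),\Coh_{\geq 3}(X_k)[1]\rangle$. This automatically makes $H^0(L\iota^\ast \widetilde{E}^\bullet_0)$ 0-dimensional and $H^{-1}(L\iota^\ast \widetilde{E}^\bullet_0)$ torsion-free; the only point still to address is semistability of the latter in $\Coh_{3,1}(X_k)$.

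I would then iterate Langton-type elementary modifications, targeted at the $R$-flat coherent sheaf $F := \HH^{-1}(\widetilde{E}^\bullet_0)$, in parallel with the proof of Proposition \ref{pro-coh31ssreduction}. Whenever $\iota^\ast F$ fails to be semistable in $\Coh_{3,1}(X_k)$, take the maximal destabilising quotient $\iota^\ast F \twoheadrightarrow Q$ (lifted to $\Coh(X_k)$) and pass to the $D^b(X_R)$-fibre of the induced morphism $\widetilde{E}^\bullet_n \to \iota_\ast Q[1]$; concretely at the chain level this realises $\HH^{-1}$ as the $R$-flat subsheaf $F^{(n+1)} := \kernel(F^{(n)} \to \iota_\ast Q)$, keeps $\HH^0$ unchanged, and preserves the generic fibre $E_K$. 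Diagram chases on long exact sequences of Tor, in the spirit of the proof of Proposition \ref{em}, verify that $R$-flatness of both terms and cohomology sheaves is maintained at each stage. After the iterations stop, one re-runs the construction of \cite[Theorem 4.1]{Lo1} on the terminal object to present it as a 3-term complex of $R$-flat coherent sheaves with $R$-flat cohomology.

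The main obstacle is proving that this iteration terminates. As in the proof of Proposition \ref{pro-coh31ssreduction}, termination reduces to the argument in \cite[Theorem 2.B.1]{HL}: were the process to continue indefinitely, the 0-dimensional residual subsheaves governing successive modifications would stabilise and descend to a nonzero 0-dimensional subsheaf of $j^\ast F = H^{-1}(E_K)$, contradicting $H^{-1}(E_K) \in \Coh_{\geq 3}(X_K)$. A secondary technical point is the realisation of each derived-category modification by an honest chain map from $\widetilde{E}^\bullet_n$ to $\iota_\ast Q[1]$: this amounts to extending the sheaf-level map $F^{(n)} \to \iota_\ast Q$ to a map out of the middle term $E^{-1}_n$, which can be arranged by working with a locally free resolution or by enlarging $E^{-1}_n$ at the outset of Phase 1 so that the extension problem becomes vacuous.
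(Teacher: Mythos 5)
Your architecture is genuinely different from the paper's, and the difference is where the trouble lies. The paper does not post-process the output of \cite[Theorem 4.1]{Lo1} by elementary modifications of the complex: it re-runs the \emph{proof} of that theorem and, at the single point where the torsion-free sheaf $\kernel (s_K)$ on $X_K$ is extended over $X_R$, substitutes the extension supplied by Proposition \ref{pro-coh31ssreduction}, so that the semistable reduction happens entirely at the sheaf level \emph{before} the complex is assembled. Your first gap is the termination argument. The maximal destabilising quotient of $\iota^\ast F^{(n)}$ in $\Coh_{3,1}(X_k)$ is a positive-rank torsion-free quotient, not a $0$-dimensional sheaf, so in the non-terminating case the Langton descent produces a positive-rank subsheaf of $j^\ast F = H^{-1}(E_K)$ with strictly larger $p_{3,1}$; this is a contradiction only if $H^{-1}(E_K)$ is already known to be semistable in $\Coh_{3,1}(X_K)$, which does not follow from $H^{-1}(E_K) \in \Coh_{\geq 3}(X_K)$ (torsion-freeness). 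The ``$0$-dimensional residual subsheaves descend to a $0$-dimensional subsheaf of $H^{-1}(E_K)$'' argument you invoke is the termination argument for the \emph{torsion-removal} phase of Proposition \ref{pro-coh31ssreduction} only. No argument from torsion-freeness alone can exist: since $H^{-1}(\widetilde{E}^\bullet)$ is $R$-flat and $p_{3,1}$-semistability is an open condition in flat families of sheaves, the central $H^{-1}$ cannot be semistable when the generic one is not. The hypothesis of Proposition \ref{pro-coh31ssreduction} (semistability of the generic fibre, satisfied by $\kernel (s_K)$ in the intended application to PT-semistable $E_K$) is doing essential work in the paper's proof, and your proof must invoke it too.

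The second gap is the modification step itself. You need a morphism $\widetilde{E}^\bullet_n \to \iota_\ast Q[1]$ in $D^b(X_R)$ restricting to the given map $F^{(n)}[1] \to \iota_\ast Q[1]$ on the subobject $F^{(n)}[1] = \tau^{\leq -1}\widetilde{E}^\bullet_n$. From the truncation triangle $F^{(n)}[1] \to \widetilde{E}^\bullet_n \to H^0(\widetilde{E}^\bullet_n) \to F^{(n)}[2]$, the obstruction to such an extension is a class in $\Ext^2_{X_R}(H^0(\widetilde{E}^\bullet_n), \iota_\ast Q)$, which need not vanish; equivalently, lifting the extension class of $\widetilde{E}^\bullet_n$ from $\Ext^2(H^0, F^{(n)})$ to $\Ext^2(H^0, F^{(n+1)})$ is obstructed by its image in $\Ext^2(H^0, \iota_\ast Q)$. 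You cannot fall back on Proposition \ref{em} here, because $Q[1]$ is a quotient in $\Ap (X_k)$ of the subobject $\iota^\ast F^{(n)}[1]$, not of $L\iota^\ast \widetilde{E}^\bullet_n$ itself, so the adjunction-unit construction of that proposition does not produce the map you need. Passing to a locally free resolution or enlarging $E^{-1}_n$ changes the presentation of the complex but not this obstruction class. Both gaps evaporate in the paper's route precisely because the reduction is performed on the sheaf $\kernel (s_K)$ before any complex is built.
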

\begin{proof}
The proof of \cite[Theorem 4.1]{Lo1} still works, except that we now extend $\kernel (s_K)$ in the proof to an $R$-flat family of torsion-free sheaves that are semistable in $\Coh_{3,1}$ using Proposition \ref{pro-coh31ssreduction}.
\end{proof}

Hence if we start with a PT-semistable object $I^0_K \in \Ap (X_K)$ of nonzero rank, we can extend it to an $R$-flat family $I^0$ of objects in $\Ap$ using Proposition \ref{pro-Lo1theorem41improv}, and apply elementary modifications starting from $I^0$.  The following proposition describes what happens when we do not arrive at a semistable central fibre after a finite number of modifications:

\begin{pro}\label{infiniteemBGconstant}
Suppose $I^0$ is an $R$-flat family of objects of nonzero rank in $\Ap$, where the generic fibre $j^\ast (I^0)$ is PT-semistable, and the central fibre $L\iota^\ast I^0$ is such that $H^0(L\iota^\ast I^0)$ is 0-dimensional, and $H^{-1}(L\iota^\ast I^0)$ is semistable in $\Coh_{3,1}(X_k)$.  Suppose that we do not arrive at a PT-semistable central fibre $L\iota^\ast I^j$ after a finite number of elementary modifications starting from $I^0$. Then the morphisms $B^{i+1} \to B^i$ and $G^{i+1} \to G^i$ induced by $L\iota^\ast I^{i+1} \to L\iota^\ast I^i$ eventually become isomorphisms.  Say they are isomorphic to $B$ and $G$.  Then we have $L\iota^\ast I^i = B \oplus G$ in $\Ap$ for all $i \gg 0$.
\end{pro}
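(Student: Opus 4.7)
The plan is to adapt Langton's semistable reduction argument to the derived-category setting in three steps: stabilise the phases $\phi(B^i)$, then stabilise the Chern characters of the $B^i$ (and hence of the $G^i$), and finally deduce that the extension presenting $L\iota^\ast I^{i+1}$ splits.

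First, since elementary modifications leave the generic fibre unchanged, $\phi(L\iota^\ast I^i)$ equals the fixed value $\phi(j^\ast I^0)$ for every $i$. By Corollary \ref{Bi1leqBi} the sequence $\phi(B^i)$ is non-increasing under $\preceq$, and by hypothesis lies strictly above $\phi(L\iota^\ast I^0)$. Using the boundedness of PT-semistable families from \cite{Lo1}, together with the fact that each $B^i$ is a PT-semistable subobject in $\Ap$ of an object with fixed Chern character and phase in the interval $(\phi(L\iota^\ast I^0),\phi(B^0)]$, only finitely many phase function germs can arise, so $\phi(B^i)$ stabilises to some $\phi_\infty$ for all $i$ sufficiently large.

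For such $i$, Corollary \ref{Bi1leqBi} gives $B^{i+1} \cap G^i = 0$. The morphism $L\iota^\ast I^{i+1} \to L\iota^\ast I^i$ extracted from the four-term sequence \eqref{eqn-ssrfourterm} has kernel $G^i$ and image $B^i$, so its restriction to $B^{i+1}$ is injective with image a subobject of $L\iota^\ast I^i$ of phase $\phi_\infty$, which must be contained in $B^i$ by the maximality property \cite[Lemma 3.1]{Lo1}. This yields a descending chain $\cdots \hookrightarrow B^{i+2} \hookrightarrow B^{i+1} \hookrightarrow B^i$ of PT-semistable subobjects of common phase, whose consecutive quotients are either zero or PT-semistable of phase $\phi_\infty$; another application of boundedness restricts the Chern characters of the $B^i$ to finitely many values, forcing the chain to stabilise. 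Write $B$ for the eventual common value; the $G^i$ then also stabilise to a common $G$ via \eqref{ses1}, and the induced maps between consecutive terms become isomorphisms.

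For $i$ in this stable regime, the composition
\[
B \;\cong\; B^{i+1} \;\hookrightarrow\; L\iota^\ast I^{i+1} \;\twoheadrightarrow\; B^i \;\cong\; B,
\]
whose middle surjection is the quotient appearing in \eqref{ses2}, is the identity on $B$ (up to the chosen identifications), providing a section of the surjection in \eqref{ses2}. Hence the short exact sequence \eqref{ses2} splits and $L\iota^\ast I^{i+1} \cong G^i \oplus B^i \cong G \oplus B$ in $\Ap$, as required. The main obstacle in this plan lies in the boundedness inputs of the first two paragraphs, which replace the elementary numerical discreteness available for Gieseker stability in Langton's original proof: one must leverage the polynomial structure of the central charge together with the boundedness results of \cite{Lo1} to control the Chern characters of PT-semistable subobjects of $L\iota^\ast I^i$ whose phase lies in the relevant bounded range.
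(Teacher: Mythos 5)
Your overall architecture mirrors the paper's — stabilise the phases $\phi(B^i)$, then the objects $B^i$ and $G^i$ themselves, then use $B^{i+1}\cap G^i=0$ to split one of the extensions \eqref{ses1}, \eqref{ses2} — and your final splitting step is sound (it is the mirror image of the paper's, which shows instead that $f_2f_1:G^i\to G^{i+1}$ is an isomorphism and splits \eqref{ses1} by $f_1(f_2f_1)^{-1}$). The difficulty is that the two stabilisation steps, which are the entire substance of the proposition, are left unproved: you reduce them to the claim that only finitely many Chern characters, hence finitely many phase germs, can occur among PT-semistable subobjects of the $L\iota^\ast I^i$ with phase above $\phi(L\iota^\ast I^0)$, and you yourself flag this as ``the main obstacle.'' The boundedness result of \cite{Lo1} you invoke is boundedness of PT-semistable objects of a \emph{fixed} Chern character, so it cannot be applied until the Chern characters of the $B^i$ are already known to lie in a finite set — which is exactly what is at stake. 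The statement you would actually need is Lemma~\ref{lemma-mdsbounded} of this paper, but that lemma requires the ambient object to satisfy properties A--C (in particular $\Coh_{3,1}$-semistability and torsion-freeness of $H^{-1}$), which is not known for $L\iota^\ast I^i$ when $i>0$; establishing enough of these properties along the sequence of modifications is itself part of the work, and the proof of that lemma is an instance of the very argument being avoided. So as written the proposal is circular at its key point.

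What the paper does instead is a multi-stage discreteness argument in the spirit of \cite{Langton}, with no appeal to boundedness. First, $\degree Z(B^i)(m)$ can only be $0$, $2$ or $3$, is eventually constant by Corollary~\ref{Bi1leqBi}, and the value $2$ is excluded because $H^{-1}(L\iota^\ast I^0)$ is torsion-free (this also forces $H^{-1}(L\iota^\ast I^i)$ to stay torsion-free for all $i$). In the $0$-dimensional case the phases are automatically constant and the chain of injections $B^{i+1}\hookrightarrow B^i$ of $0$-dimensional sheaves stabilises at once. In the $3$-dimensional case one shows successively: the slopes $-\beta^i_2/(-\beta^i_3)$ form a non-increasing sequence in $\tfrac{1}{r!}\ZZ$ bounded below by $d/r$, hence stabilise; $H^0(B^i)$ is $0$-dimensional for every $i$, so all $H^{-1}(L\iota^\ast I^i)$ share the same reduced Hilbert polynomial $p_{3,1}$, and the hypothesis that $H^{-1}(L\iota^\ast I^0)$ is semistable in $\Coh_{3,1}$, propagated through the $0$-dimensional modifications, sandwiches $p_{3,1}(H^{-1}(B^i))$ between non-increasing and bounded-below constraints and forces it to equal $p_{3,1}(H^{-1}(L\iota^\ast I^i))$ for all large $i$; finally $ch_3(B^i)/\rank(H^{-1}(B^i))$ is a non-increasing sequence with denominators bounded by $r$ and bounded below by $ch_3(L\iota^\ast I^0)/\rank(H^{-1}(L\iota^\ast I^0))$, hence stabilises, at which point $\phi(B^i)$ and then $ch(B^i)$ are constant and the injections $B^{i+1}\hookrightarrow B^i$ become isomorphisms by comparing Hilbert polynomials. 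To repair your proof you would need to supply this chain of numerical arguments (or an honest proof of the finiteness of the set of occurring Chern characters that does not presuppose properties A--C for all $L\iota^\ast I^i$); the hypothesis that $H^{-1}(L\iota^\ast I^0)$ is semistable in $\Coh_{3,1}$, which your sketch never uses, is essential to it.
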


\begin{proof}
Since $\phi (B^i)  \succ \phi (L\iota^\ast I^i)$ for all $i$, the degree of the polynomial $Z(B^i)(m)$ must be 2, 0 or 3.  Since $\phi (B^{i+1}) \preceq \phi (B^i)$ for all $i$, $\degree Z(B^i)(m)$ is constant for $i \gg 0$.  For instance, if $Z(B^i)(m)$ is a degree-2 polynomial, then $Z(B^{i+1})(m)$ could be of degree 2, 0 or 3; but if $Z(B^i)(m)$ is a degree-0 polynomial, then $Z(B^{i+1})(m)$ can only be of degree 0 or 3.

Note that, since $H^{-1}(L\iota^\ast I^0)$ is nonzero and torsion-free, if $H^{-1}(B^0)$ is nonzero then it is also torsion-free, in which case $B^0$ has 3-dimensional support.  By Corollary \ref{Bi1leqBi}, none of the subsequent $B^i$ can be 2-dimensional.  Hence all $B^i$ are 0-dimensional or 3-dimensional.  One consequence of this is that  $H^{-1}(L\iota^\ast I^i)$ is torsion-free for all $i$: for, if not, then $H^{-1}(L\iota^\ast I^i)$ has a 2-dimensional subsheaf $T$ for some $i$, and $T[1] \hookrightarrow L\iota^\ast I^i$ would be destabilising, and has a larger $\phi$ than $B^i$, a contradiction.

If $\degree Z(B^i)(m)=0$ for  $i \gg 0$, then $B^i$ is supported on dimension 0 for $i \gg 0$, and so  $\phi (B^i)$ would be constant.  By Corollary \ref{Bi1leqBi}, $0=B^{i+1} \cap G^i = B^{i+1} \cap \kernel (L\iota^\ast I^{i+1} \to B^i) = \kernel (B^{i+1} \to B^i)$.  Hence we have injections $B^{i+1} \hookrightarrow B^i$ for $i \gg 0$.  So $B^i = H^0(B^i)$ must stabilise eventually.

From now on, assume that $\degree Z(B^i)(m)=3$ for $i \gg 0$.  Let $ch_d (B^i)=\beta_d^i$ and $ch_d(G^i)=\gamma_d^i$ for $0 \leq d \leq 3$.  Then from the exact sequence $0 \to B^i \to L\iota^\ast I^i \to G^i \to 0$, and because $ch(L\iota^\ast I^i)$ is the same for all $i$, we get the equalities
\begin{align*}
  -r &= \beta_3^i + \gamma_3^i \\
  -d &= \beta_2^i + \gamma_2^i \\
  \beta &= \beta_1^i + \gamma_1^i \\
  n &= \beta_0^i + \gamma_0^i.
\end{align*}
From $B^i \hookrightarrow L\iota^\ast I^i$, we get $H^{-1}(B^i) \subset H^{-1}(L\iota^\ast I^i)$.  Hence $1 \leq ch_0(H^{-1}(B^i)) = -\beta_3^i \leq r$.  So $-r \leq \beta_3^i \leq -1$ for all $i$.

Since $\phi (B^i) \succ \phi (L\iota^\ast I^i)$, we have $\mu (H^{-1}(B^i)) \geq \mu (H^{-1}(L\iota^\ast I^i))$, and so $\frac{-\beta^i_2}{-\beta^i_3} \geq \frac{d}{r}$, which is fixed.  Hence the sequence $\left( \frac{-\beta_2^i}{-\beta_3^i}\right)$ is a decreasing sequence (by Corollary \ref{Bi1leqBi}) in $\frac{1}{r!}\ZZ$ bounded from below, and so must eventually become constant.  We show that, in fact, the phase $\phi (B^i)$ itself must eventually become constant.

Suppose $B^i$ is 0-dimensional for $0 \leq i \leq i_0$, and 3-dimensional for $i \geq i_0+1$ (we allow $i_0+1$ to be 0).  Recall the short exact sequences \eqref{ses1}, \eqref{ses2} in $\Ap$ for $i \geq 0$:
\begin{gather*}
  0 \to B^i \to L\iota^\ast I^i \to G^i \to 0 \\
  0 \to G^i \to L\iota^\ast I^{i+1} \to B^i \to 0.
  \end{gather*}
By hypothesis, $H^{-1}(L\iota^\ast I^0)$ is semistable in $\Coh_{3,1}(X_k)$.  From the exact sequence
\[ 0 \to H^{-1}(L\iota^\ast I^0) \to H^{-1}(G^0) \to H^0(B^0) \]
we get that $H^{-1}(G^0)$ is also semistable in $\Coh_{3,1}$.

For $0 \leq i \leq i_0$, we know $B^i$ is 0-dimensional, and so we have the exact sequence $0 \to H^{-1}(L\iota^\ast I^i) \to H^{-1}(G^i) \to H^0(B^i)$, and $H^{-1}(G^i) \cong H^{-1}(L\iota^\ast I^{i+1})$.  Repeating the argument in the previous paragraph, we get that $H^{-1}(L\iota^\ast I^{i_0+1})$ is semistable in $\Coh_{3,1}$.

When $0 \leq i \leq i_0$, we have $H^0(B^i)=B^i$ is 0-dimensional; when $i  \geq i_0+1$, we have that $B^i$ is 3-dimensional but PT-semistable, so  $H^0(B^i)$ is 0-dimensional by \cite[Lemma 3.3]{Lo1}.  So for every $i \geq 0$, we have that $H^0(B^i)$ is 0-dimensional.  This, along with $H^0(L\iota^\ast I^0)$ being 0-dimensional, implies that  $H^0 (L\iota^\ast I^i)$ is  0-dimensional for all $i \geq 0$.  Since all the $L\iota^\ast I^i$ have the same Hilbert polynomial, this implies that all $H^{-1} (L\iota^\ast I^i)$ have the same reduced Hilbert polynomial $p_{3,1}$.

Now, $H^{-1}(B^{i_0+1})$ is a nonzero torsion-free subsheaf of  $H^{-1}(L\iota^\ast I^{i_0+1})$, which is semistable in $\Coh_{3,1}$.  However,  $\phi (B^{i_0+1}) \succ \phi (L\iota^\ast I^{i_0+1})$.  Therefore, the two reduced Hilbert polynomials $p_{3,1} (H^{-1}(B^{i_0+1}))$ and $p_{3,1} (H^{-1}(L\iota^\ast I^{i_0+1}))$ are forced to be equal.

That $\phi (B^{i+1}) \preceq \phi (B^i)$ for all $i$ implies $p_{3,1} (H^{-1}(B^{i+1})) \preceq p_{3,1} (H^{-1}(B^i))$ for all $i \geq i_0+1$.  Also, $\phi (H^{-1}(B^i)) \succ \phi (H^{-1}(L\iota^\ast I^i))$ for all $i \geq i_0+1$, hence $p_{3,1} (H^{-1}(B^i)) \succeq p_{3,1} (H^{-1}(L\iota^\ast I^i))$ for all $i \geq i_0+1$.  These inequalities, together with the conclusions of the last two paragraphs, imply that $p_{3,1} (H^{-1}(B^i)) = p_{3,1} (H^{-1}(L\iota^\ast I^i))$ is the same for all $i \geq i_0+1$.

That $\phi (B^{i+1}) \preceq \phi (B^i)$ means that the sequence $\{ch_3(B^i)/\rank (H^{-1}(B^i))\}_{i \geq i_0+1}$ is a decreasing sequence.  On the other hand, that $\phi (B^i) \succ \phi (L\iota^\ast I^i)$ for all $i$ and that $p_{3,1} (H^{-1}(B^i)) = p_{3,1} (H^{-1}(L\iota^\ast I^i))$ for all $i \geq i_0+1$ together imply
\[
\frac{ch_3(L\iota^\ast I^0)}{\rank (H^{-1}(L\iota^\ast I^0))} \leq \frac{ch_3(B^i)}{\rank (H^{-1}(B^i))}
 \]
 for all $i \geq i_0+1$.  Since $\rank (H^{-1}(B^i))$ is a positive integer at most equal to $\rank (H^{-1}(L\iota^\ast I^0))$ for $i \geq i_0+1$, the decreasing sequence $\{ch_3(B^i)/\rank (H^{-1}(B^i))\}_{i \geq i_0+1}$ is bounded below by the constant $ch_3(L\iota^\ast I^0)/\rank (H^{-1}(L\iota^\ast I^0))$.  So $ch_3(B^i)/\rank (H^{-1}(B^i))$ must stabilise for large enough $i$, at which point $\phi (B^i)$  becomes constant.

By Corollary \ref{Bi1leqBi}, $0=B^{i+1} \cap G^i = B^{i+1} \cap \kernel (L\iota^\ast I^{i+1} \to B^i) = \kernel (B^{i+1} \to B^i)$ (where the map $L\iota^\ast I^{i+1} \to B^i$ is from the sequence \eqref{ses2}).  That is, we have injections $B^{i+1} \hookrightarrow B^i$ in $\Ap$.  Suppose this injection has cokernel $C$.  From the long exact sequence of cohomology of $0 \to B^{i+1} \to B^i \to C \to 0$, and the fact that the cohomolgy of $B^i$ becomes constant at each degree for $i \gg 0$ (compare Hilbert polynomials to see this), we get that each cohomology of $C$ must be zero for large enough $i$.  In other words, for $i \gg 0$, the map $L\iota^\ast I^{i+1} \to B^i$ takes $B^{i+1}$ isomorphically onto $B^i$.  That is, the map $L\iota^\ast I^{i+1} \to L\iota^\ast I^i$ in the sequence \eqref{eqn-ssrfourterm} takes $B^{i+1}$ isomorphically onto $B^i$.  This induces isomorphisms on the cokernels of the inclusions $B^i \hookrightarrow L\iota^\ast I^i$ and $B^{i+1} \hookrightarrow L\iota^\ast I^{i+1}$, giving us a commutative diagram for $i \gg 0$
\[
\xymatrix{
B^{i+1} \arinj[r] \ar[d]^\thicksim & L\iota^\ast I^{i+1} \arsurj[r] \ar[d] & G^{i+1} \ar[d]^\thicksim \\
B^i \arinj[r] & L\iota^\ast I^i \arsurj[r] & G^i
}.
\]
Let us fix objects $B,G$ that are isomorphic to  $B^i, G^i$ for all $i \gg 0$, respectively.

Now, if we take the injection $f_1 : G^i \hookrightarrow L\iota^\ast I^{i+1}$ from \eqref{eqn-ssrfourterm} and compose it with the surjection $f_2 : L\iota^\ast I^{i+1} \twoheadrightarrow G^{i+1}$ from \eqref{ses1}, the kernel of the composition $f_2f_1 : G^i \hookrightarrow L\iota^\ast I^{i+1} \twoheadrightarrow G^{i+1}$ would be isomorphic to $G^i \cap B^{i+1}$ (since the kernel of the surjection $L\iota^\ast I^{i+1} \twoheadrightarrow G^{i+1}$ is $B^{i+1}$), which we just showed is zero for $i \gg 0$.  Thus the composition $f_2f_1 : G^i \to G^{i+1}$ is an injection.  Since $G^i$ and $G^{i+1}$ are isomorphic for $i \gg 0$, $f_2f_1$ is an isomorphism  for $i \gg 0$.  Hence the map $f_1(f_2f_1)^{-1}$ splits the short exact sequence \eqref{ses1}, and so $L\iota^\ast I^i \cong B^i \oplus G^i \cong B \oplus G$ for  $i \gg 0$.
\end{proof}

\subsection{Constructing an Inverse System}

For the rest of this section, assume the setup of Proposition \ref{infiniteemBGconstant}.  That is, start with a PT-semistable object $I^0_K \in \Ap_K$, extend it to a flat family $I:=I^0$ of objects in $\Ap$ over $\Spec R$, assume that the central fibre of $I^0$ is not PT-semistable, and that a finite number of semistable reduction does not yield a PT-semistable central fibre.  This gives us an infinite sequence of flat families
$$ \cdots \to I^2 \to I^1 \to I^0$$
in $D^b(X_R)$.  We would now like to show that, if $R$ is a complete DVR, then this would imply that $j^\ast I^0$ is PT-unstable, a contradiction.

\subsubsection{The Cohomology of Various Objects}

Using the techniques developed in \cite{Lo1}, we now compute the cohomology of various objects.

\noindent\textit{Definition of $Q^m$.} By Proposition \ref{infiniteemBGconstant}, we can replace $I^0$ by $I^m$ for $m \gg 0$ if necessary, and assume that all the $B^i$ and $G^i$ are isomorphic, to some $B$ and $G$, respectively.  We can pull back the composition $I^m \to I^{m-1} \to \cdots \to I^0=I$ from $D(X_R)$ to $D(X_m)$, obtaining a morphism in $\Ap_m$ which we will denote by $\theta_m$:
\[
\theta_m : L\iota_m^\ast I^m \to L\iota_m^\ast I.
\]

Also, we define $Q^m \in D(X_R)$ by the exact triangle
\[
I^m \to I \to Q^m \to I^m[1].
\]
Since all the $I^i$ have bounded cohomology, we know $Q^m$ also has bounded cohomology, i.e. $Q^m  \in D^b(X_R)$.

We can also apply the octrahedral axiom to the commutative triangle in $D(X_R)$
\[
\xymatrix{
  & I^{m-1} \ar[dr] & \\
I^m \ar[ur] \ar[rr] & & I^0
}
\]
and obtain
\begin{equation}\label{octahedral1}
\xymatrix{
& & & \iota_\ast G \ar[ddd] \\
& & & \\
& I^{m-1} \ar[dr] \ar[uurr] & & \\
I^m \ar[rr] \ar[ur] & & I^0 \ar[r] \ar[dr] & Q^m  \ar[d] \\
& & & Q^{m-1}
}.
\end{equation}

In particular, we obtain the exact triangle in $D(X_R)$ for every $m\geq 1$:
\[
\iota_\ast G \to Q^m \to Q^{m-1} \to \iota_\ast G[1].
\]

Modifying the proof of \cite[Lemma 2.4.1]{AP} slightly, we get the following lemma that says $Q^m$ lives in  $D(X^m)$:

\begin{lemma}\label{XmXnXm+n}
Let $F \in D^b(X_m)$ and $G \in D^b(X_n)$.  Then for every morphism ${\iota_m}_\ast F \overset{\al}{\to} {\iota_n}_\ast G$ in $D(X_R)$, there exists a morphism ${\iota_{m+n,m}}_\ast F \overset{\beta}{\to} {\iota_{m+n,n}}_\ast G$ in $D^b(X_{m+n})$ such that $\al = {\iota_{m+n}}_\ast \beta$.
\end{lemma}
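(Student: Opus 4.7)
The plan is to factor the closed immersions $\iota_m$ and $\iota_n$ through $\iota_{m+n}$, and then invoke the full faithfulness of pushforward along the closed immersion $\iota_{m+n}$. Concretely, writing $\iota_m = \iota_{m+n}\circ \iota_{m+n,m}$ and $\iota_n = \iota_{m+n}\circ \iota_{m+n,n}$, and using that pushforward along a closed immersion is exact, I would rewrite
\[
{\iota_m}_\ast F = {\iota_{m+n}}_\ast F', \qquad {\iota_n}_\ast G = {\iota_{m+n}}_\ast G',
\]
where $F' := {\iota_{m+n,m}}_\ast F$ and $G' := {\iota_{m+n,n}}_\ast G$ both lie in $D^b(X_{m+n})$ (boundedness is preserved because ${\iota_{m+n,m}}_\ast$ and ${\iota_{m+n,n}}_\ast$ are exact). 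Then $\alpha$ becomes a morphism ${\iota_{m+n}}_\ast F' \to {\iota_{m+n}}_\ast G'$ in $D(X_R)$.

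The crucial step is then to establish that ${\iota_{m+n}}_\ast: D(X_{m+n}) \to D(X_R)$ is fully faithful. A clean way to see this is via the adjunction ${\iota_{m+n}}_\ast \dashv \iota_{m+n}^!$: for any $N \in D(X_{m+n})$ the unit $N \to \iota_{m+n}^! {\iota_{m+n}}_\ast N$ is an isomorphism (a basic property of closed immersions between Noetherian schemes), hence
\[
\Hom_{D(X_R)}({\iota_{m+n}}_\ast F', {\iota_{m+n}}_\ast G') \cong \Hom_{D(X_{m+n})}(F', \iota_{m+n}^! {\iota_{m+n}}_\ast G') \cong \Hom_{D(X_{m+n})}(F', G').
\]
(Alternatively, one can argue at the sheaf level: ${\iota_{m+n}}_\ast$ identifies $\QCoh(X_{m+n})$ with the full subcategory of $\QCoh(X_R)$ of modules annihilated by the ideal of $X_{m+n}$, preserves injectives as the right adjoint of the exact functor $\iota_{m+n}^\ast$, and hence induces a fully faithful functor on derived categories.) The unique preimage $\beta$ of $\alpha$ under this isomorphism is the desired morphism in $D^b(X_{m+n})$.

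I do not anticipate a genuine obstacle here; the argument is essentially the bounded-derived-category version of \cite[Lemma 2.4.1]{AP}, with the only modification being that $F$ and $G$ are allowed to live on different infinitesimal thickenings $X_m$ and $X_n$ and are combined on the common thickening $X_{m+n}$ (any $X_k$ with $k \geq \max(m,n)$ would actually suffice; the choice $k=m+n$ is presumably for notational convenience in the later applications in \eqref{octahedral1}). The sole nontrivial technical input is the full faithfulness of closed-immersion pushforward, which is classical.
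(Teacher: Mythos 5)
The step your argument hinges on --- full faithfulness of ${\iota_{m+n}}_\ast : D(X_{m+n}) \to D(X_R)$ --- is false, and both justifications you give for it fail. Since $X_{m+n}$ is cut out of $X_R$ by the single nonzerodivisor $\pi^{m+n}$, the right adjoint is $\iota_{m+n}^!(-) = R\HHom_{\OO_{X_R}}({\iota_{m+n}}_\ast\OO_{X_{m+n}}, -)$, and the Koszul resolution $[\OO_{X_R}\xrightarrow{\pi^{m+n}}\OO_{X_R}]$ gives $\iota_{m+n}^!\,{\iota_{m+n}}_\ast N \cong N \oplus N[-1]$ for $N \in D(X_{m+n})$ (multiplication by $\pi^{m+n}$ is zero on ${\iota_{m+n}}_\ast N$); the unit is the inclusion of the first summand, not an isomorphism. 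Your sheaf-level variant fails because $\iota_{m+n}^\ast$ is only right exact ($\OO_{X_{m+n}}$ has $\pi$-torsion, hence is not flat over $\OO_{X_R}$), so ${\iota_{m+n}}_\ast$ does not preserve injectives, and Ext groups computed in the subcategory of $\pi^{m+n}$-torsion modules genuinely differ from those in $\QCoh(X_R)$: locally, $\Ext^i_{R/\pi^{m+n}}(R/\pi, R/\pi) \neq 0$ for all $i \geq 0$ (use the periodic free resolution), whereas $\Ext^i_R(R/\pi,R/\pi)=0$ for $i\geq 2$. What the correct adjunction $L\iota_{m+n}^\ast \dashv {\iota_{m+n}}_\ast$ gives is $\Hom_{D(X_R)}({\iota_{m+n}}_\ast F', {\iota_{m+n}}_\ast G') \cong \Hom_{D(X_{m+n})}(L\iota_{m+n}^\ast{\iota_{m+n}}_\ast F', G')$, and $L\iota_{m+n}^\ast{\iota_{m+n}}_\ast F'$ sits in a triangle between $F'[1]$ and $F'$, so the cokernel of the comparison map from $\Hom_{D(X_{m+n})}(F',G')$ is controlled by $\Hom(F'[1],G')$, which has no reason to vanish for complexes. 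No formal adjunction argument yields the required surjectivity.

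Your parenthetical claim that any $X_\ell$ with $\ell \geq \max(m,n)$ would do is also wrong, and it points to what is really going on. For $m=n=1$, the class of $0 \to \OO_{X_1} \to \OO_{X_2} \to \OO_{X_1} \to 0$ in $\Hom_{D(X_R)}(\iota_\ast\OO_{X_1},\iota_\ast\OO_{X_1}[1])$ is not pushed forward from $X_1 = X_{\max(m,n)}$ (its middle term is not killed by $\pi$), but it is pushed forward from $X_2=X_{m+n}$. The exponent $m+n$ comes from the chain-level mechanism of the paper's proof: representing $\al$ by a roof $F^\bullet \xleftarrow{q} P^\bullet \xrightarrow{p} G^\bullet$ with $P^\bullet$ torsion-free over $\OO_{X_R}$ and $K^\bullet = \kernel (q)$, one has $\pi^m P^\bullet \subseteq K^\bullet$ (as $F^\bullet$ is killed by $\pi^m$) and $p(\pi^n K^\bullet)=0$ (as $G^\bullet$ is killed by $\pi^n$), so the roof descends to $\overline{P^\bullet}=P^\bullet/\pi^n K^\bullet$, which is a complex over $X_{m+n}$ precisely because $\pi^{m+n}P^\bullet \subseteq \pi^n K^\bullet$. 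This descent argument, which exploits the specific torsion orders of $F$ and $G$, is the actual content of the lemma and is entirely missing from your proposal.
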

\begin{proof}
We adapt the proof of \cite[Lemma 2.4.1]{AP} to our situation.  Let $F^\bullet, G^\bullet$ be bounded complexes of coherent sheaves on $X_m, X_n$, representing $F,G$, respectively.  We can choose a bounded complex $P^\bullet$ of torsion-free $\OO_{X_R}$-modules, and a surjective morphism of complexes of $\OO_{X_R}$-modules $q : P^\bullet \to F^\bullet$ that is a quasi-isomorphism, and a chain map $p : P^\bullet \to G^\bullet$ such that $\al = pq^{-1}$ in $D(X_R)$:
\begin{equation*}
\xymatrix{
 & P^\bullet \ar[dl]_q^{\thicksim} \ar[dr]^p & \\
F^\bullet \ar[rr]^\al & & G^\bullet
}.
\end{equation*}

Define $K^\bullet := \kernel (q)$ in the abelian category $\mbox{Kom}(X_R)$.  Then $K^\bullet$ is acyclic (i.e.\ all cohomology are 0), and multiplication by $\pi^n$ is injective on each $K^i$.  Set $\overline{P^\bullet} := P^\bullet /\pi^n K^\bullet$ (quotient in $\mbox{Kom}(X_R)$).  Then $q$ factors through $\overline{q} : \overline{P^\bullet} \to F^\bullet$ because $\pi^n K^\bullet \subseteq K^\bullet$.  Also, $p$ factors through $\overline{p} : \overline{P^\bullet} \to G^\bullet$ because $\pi^n K^\bullet \subset \pi^n P^\bullet$ and $p(\pi^n P^\bullet)=0$ (since the $G^i$ live on $X_n$).  Now $\overline{q}$ is still a quasi-isomorphism, so $\al = \overline{p}\overline{q}^{-1}$.

However, because $\pi^m P^\bullet \subset K^\bullet \subset P^\bullet$, we get that $P^\bullet/\pi^{m+n}P^\bullet \twoheadrightarrow P^\bullet/\pi^n K^\bullet = \overline{P^\bullet}$ in $\mbox{Kom} (X_R)$.  That is, $\overline{P^\bullet}$ is a complex of $\OO_{X_{m+n}}$-modules.

Hence $\overline{q}, \overline{p}$ can both be considered as chain maps over $X_{m+n}$, and so $\al$ is the pushforward of a morphism in $D^b(X_{m+n})$.
\end{proof}

\begin{coro}\label{QmfromXm}
$Q^m \cong {\iota_m}_\ast \tilde{Q}^m$ for some $\tilde{Q}^m \in D^b(X_m)$.
\end{coro}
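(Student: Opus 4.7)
The plan is to induct on $m \geq 1$. For the base case $m=1$, observe that $I^0 = I$ implies $Q^0 = 0$, so the exact triangle $\iota_\ast G \to Q^1 \to Q^0 \to \iota_\ast G[1]$ shows $Q^1 \cong \iota_\ast G = {\iota_1}_\ast G$, and one can take $\tilde{Q}^1 := G \in D^b(X_1)$.

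For the inductive step, assume $Q^{m-1} \cong {\iota_{m-1}}_\ast \tilde{Q}^{m-1}$ for some $\tilde{Q}^{m-1} \in D^b(X_{m-1})$. The connecting morphism from the triangle
\[
\iota_\ast G \to Q^m \to Q^{m-1} \to \iota_\ast G[1]
\]
is then a morphism ${\iota_{m-1}}_\ast \tilde{Q}^{m-1} \to {\iota_1}_\ast G[1]$ in $D(X_R)$. Since $(m-1)+1 = m$, Lemma \ref{XmXnXm+n} applies and produces a morphism
\[
\beta : {\iota_{m,m-1}}_\ast \tilde{Q}^{m-1} \to {\iota_{m,1}}_\ast G[1]
\]
in $D^b(X_m)$ whose pushforward via ${\iota_m}_\ast$ recovers the original connecting map.

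I now complete $\beta[-1]$ to an exact triangle in $D^b(X_m)$:
\[
{\iota_{m,1}}_\ast G \to \tilde{Q}^m \to {\iota_{m,m-1}}_\ast \tilde{Q}^{m-1} \xrightarrow{\beta} {\iota_{m,1}}_\ast G[1],
\]
which defines $\tilde{Q}^m$; note $\tilde{Q}^m$ is bounded since its two neighbours in the triangle are. Because ${\iota_m}_\ast$ is exact and hence a triangulated functor, pushing forward gives an exact triangle in $D(X_R)$ with the same connecting morphism as the defining triangle of $Q^m$. The standard triangulated-category argument (complete the identity on the outer two terms to a morphism of triangles; the middle map is forced to be an isomorphism) then yields $Q^m \cong {\iota_m}_\ast \tilde{Q}^m$, closing the induction.

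The only substantive ingredient is Lemma \ref{XmXnXm+n}, which ensures the connecting map descends to $X_m$; the rest is a formal consequence of the triangulated structure and the exactness of $\iota_{m,\ast}$. No genuine obstacle is expected.
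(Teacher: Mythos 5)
Your proof is correct and follows essentially the same route as the paper's: induct on $m$, use Lemma \ref{XmXnXm+n} to descend the connecting morphism of the triangle $\iota_\ast G \to Q^m \to Q^{m-1}$ to a morphism $\beta$ over $X_{(m-1)+1}=X_m$, define $\tilde{Q}^m$ as the cone of (a shift of) $\beta$, and push forward. The only cosmetic difference is that you rotate the triangle to work with $Q^{m-1}\to\iota_\ast G[1]$ rather than $Q^{m-1}[-1]\to\iota_\ast G$, and you spell out the base case and the cone-comparison step that the paper leaves implicit.
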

\begin{proof}
We have the exact triangle in $D(X_R)$
\[
\iota_\ast G \to Q^m \to Q^{m-1} \to \iota_\ast G[1] .
\]
So $Q^m$ is the cone of some morphism $Q^{m-1}[-1] \overset{\al}{\to} \iota_\ast G$ in $D(X_R)$.  By induction and the lemma above, $\al={\iota_m}_\ast \beta$ for some morphism $\beta$ in $D(X_m)$.  Let $\tilde{Q}^m := \cone (\beta)$ in $D(X_m)$.  Then ${\iota_m}_\ast \tilde{Q}^m$ must be quasi-isomorphic to $Q^m$ in $D(X_R)$.  That $\tilde{Q}^m$ has bounded cohomology follows from $Q^m$ itself having bounded cohomology, and the exactness of pushforward ${\iota_m}_\ast$.
\end{proof}

\begin{lemma}\label{cohomLmIrLmQr}
With respect to the t-structure with heart $\Ap_m$, and for $r \geq 0$, $m \geq 1$,
\begin{itemize}
\item[(a)] $\mathcal{H}^i L\iota_m^\ast I^r =0$ whenever $i\neq 0$.
\item[(b)] $\mathcal{H}^i L\iota_m^\ast Q^r =0$ whenever $i\neq 0, -1$.
\end{itemize}
In particular, $L\iota_m^\ast I^r \in \Ap_m$ for all $m \geq 1, r \geq 0$.
\end{lemma}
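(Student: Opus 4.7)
The plan is to prove (a) by induction on $m \geq 1$ (with $r \geq 0$ arbitrary), and then derive (b) from (a) by reading off a long exact sequence.

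For the base case $m = 1$ of (a), the statement reduces to $L\iota^\ast I^r \in \Ap$, i.e.\ that each $I^r$ is a flat family of objects in $\Ap$. I would argue this by a secondary induction on $r$: $I^0$ is constructed via Proposition \ref{pro-Lo1theorem41improv} to be such a flat family, and Proposition \ref{em} guarantees that each elementary modification $I^{r-1} \mapsto I^r$ preserves the flat family property.

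For the inductive step from $m-1$ to $m$ (with $m \geq 2$), the key device is a devissage triangle on $X_m$. Multiplication by $\pi$ on $\mathcal{O}_{X_m}$ has kernel $\pi^{m-1}\mathcal{O}_{X_m} \cong {\iota_{m,m-1}}_\ast \mathcal{O}_{X_{m-1}}$ and cokernel ${\iota_{m,1}}_\ast \mathcal{O}_{X_1}$, giving a short exact sequence of $\mathcal{O}_{X_m}$-modules
\[
0 \to {\iota_{m,m-1}}_\ast \mathcal{O}_{X_{m-1}} \xrightarrow{\cdot \pi} \mathcal{O}_{X_m} \to {\iota_{m,1}}_\ast \mathcal{O}_{X_1} \to 0.
\]
Tensoring this derivedly with $L\iota_m^\ast I^r$ over $\mathcal{O}_{X_m}$ and applying the projection formula
\[
L\iota_m^\ast I^r \Lo_{\mathcal{O}_{X_m}} {\iota_{m,n}}_\ast \mathcal{O}_{X_n} \cong {\iota_{m,n}}_\ast (L\iota_n^\ast I^r)
\]
(which follows from $L\iota_n^\ast = L\iota_{m,n}^\ast \circ L\iota_m^\ast$) yields the distinguished triangle in $D(X_m)$
\[
{\iota_{m,m-1}}_\ast L\iota_{m-1}^\ast I^r \to L\iota_m^\ast I^r \to {\iota_{m,1}}_\ast L\iota^\ast I^r \to {\iota_{m,m-1}}_\ast L\iota_{m-1}^\ast I^r[1].
\]
By the inductive hypothesis $L\iota_{m-1}^\ast I^r \in \Ap_{m-1}$, and by the base case $L\iota^\ast I^r \in \Ap$; since ${\iota_{m,n}}_\ast$ carries $\Ap_n$ into $\Ap_m$ (as noted in the excerpt), the two outer terms of the triangle lie in $\Ap_m$. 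Being the heart of a t-structure, $\Ap_m$ is closed under extensions, and hence $L\iota_m^\ast I^r \in \Ap_m$, completing the induction and in particular establishing the final assertion.

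Part (b) then follows by applying $L\iota_m^\ast$ to the defining triangle $I^r \to I \to Q^r \to I^r[1]$ (with $I = I^0$) and reading off the long exact sequence of $\Ap_m$-cohomology: by (a), both $L\iota_m^\ast I^r$ and $L\iota_m^\ast I$ lie in $\Ap_m$, so their $\HH^i_{\Ap_m}$ vanish outside degree $0$, and the long exact sequence immediately forces $\HH^i_{\Ap_m}(L\iota_m^\ast Q^r) = 0$ for all $i \neq -1, 0$. I expect the main (though modest) obstacle to be setting up the devissage triangle cleanly and invoking the projection formula in the correct form; once that is in place, the extension-closure of the heart and a straightforward long exact sequence chase finish both parts.
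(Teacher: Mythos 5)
Your argument is correct, and for part (b) it coincides with the paper's: pull the defining triangle $I^r \to I^0 \to Q^r \to I^r[1]$ back along $L\iota_m^\ast$ and read off the long exact sequence of $\HH^\bullet_{\Ap_m}$, using (a) to kill all terms outside degree $0$. Where you genuinely diverge is part (a): the paper disposes of it in one line by citing \cite[Proposition 5.10]{Lo1}, which states exactly that an $R$-flat family $E$ (i.e.\ $L\iota^\ast E \in \Ap$) satisfies $L\iota_m^\ast E \in \Ap_m$ for every $m\geq 1$; you instead re-prove this by d\'evissage, inducting on $m$ via the triangle ${\iota_{m,m-1}}_\ast L\iota_{m-1}^\ast I^r \to L\iota_m^\ast I^r \to {\iota_{m,1}}_\ast L\iota^\ast I^r \to [1]$ obtained by tensoring $0 \to {\iota_{m,m-1}}_\ast \OO_{X_{m-1}} \to \OO_{X_m} \to {\iota_{m,1}}_\ast \OO_{X_1} \to 0$ with $L\iota_m^\ast I^r$, applying the projection formula, and invoking extension-closure of the heart. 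This is a valid, self-contained substitute for the citation; the paper's route buys brevity, yours makes the lemma independent of the external reference. Your base case is also legitimate: that every $I^r$ is a flat family is the standing hypothesis of this subsection, supplied by Proposition \ref{pro-Lo1theorem41improv} for $I^0$ and Proposition \ref{em} for each modification. One small slip in the prose: the kernel of the endomorphism $\cdot\pi$ of $\OO_{X_m}$ is $\pi^{m-1}\OO_{X_m} \cong {\iota_{m,1}}_\ast\OO_{X_1}$, not ${\iota_{m,m-1}}_\ast\OO_{X_{m-1}}$; what your displayed sequence actually encodes is the factorisation through the image, $\pi\OO_{X_m} \cong {\iota_{m,m-1}}_\ast\OO_{X_{m-1}} \hookrightarrow \OO_{X_m} \twoheadrightarrow {\iota_{m,1}}_\ast\OO_{X_1}$, which is the correct sequence and is what the rest of the argument uses, so nothing downstream is affected.
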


\begin{proof}
By assumption, $L\iota^\ast I^0 \in \Ap$, so (a) follows from Proposition \cite[Proposition 5.10]{Lo1}.  As for the cohomology of $Q^r$, we start with the exact triangle
\[
I^r \to I^0 \to Q^r \to I^r[1]
\]
which can be pulled back to
\begin{equation}\label{LmIIQtriangle}
L\iota_m^\ast I^r \to L\iota_m^\ast I^0 \to L\iota_m^\ast Q^r \to L\iota_m^\ast I^r[1].
\end{equation}
Then we can take the long exact sequence of cohomology with respect to the heart $\Ap_m$, and use part (a) to conclude (b).
\end{proof}

\begin{lemma}
The object $\tilde{Q}^m \in D^b(X_m)$ lies in the heart $\Ap_m$.
\end{lemma}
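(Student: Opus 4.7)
My plan is to proceed by induction on $m$, built around the exact triangle
\[
\iota_\ast G \to Q^m \to Q^{m-1} \to \iota_\ast G[1]
\]
in $D^b(X_R)$ coming from diagram \eqref{octahedral1}, together with the identifications supplied by Corollary \ref{QmfromXm}. The base case $m=1$ is immediate: since $Q^0 = 0$ from its defining triangle (with $I^0 = I$), the triangle above for $m=1$ yields $Q^1 \cong \iota_\ast G$, whence $\tilde{Q}^1 \cong G \in \Ap = \Ap_1$.

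For the inductive step, assume $\tilde{Q}^{m-1} \in \Ap_{m-1}$. I would first rewrite the three vertices of the triangle above as $\iota_\ast G = {\iota_m}_\ast {\iota_{m,1}}_\ast G$, $Q^m = {\iota_m}_\ast \tilde{Q}^m$, and $Q^{m-1} = {\iota_m}_\ast {\iota_{m,m-1}}_\ast \tilde{Q}^{m-1}$, and then descend the whole triangle to $D^b(X_m)$, obtaining
\[
{\iota_{m,1}}_\ast G \to \tilde{Q}^m \to {\iota_{m,m-1}}_\ast \tilde{Q}^{m-1} \to {\iota_{m,1}}_\ast G[1].
\]
By the remark in the notation section that ${\iota_{m,m'}}_\ast$ is exact and takes $\Ap_{m'}$ into $\Ap_m$, the outer two terms of this triangle both lie in $\Ap_m$: the first because $G \in \Ap$, the third by the inductive hypothesis. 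Taking the long exact sequence of $\HH^\bullet_{\Ap_m}$-cohomology then forces $\HH^i_{\Ap_m}(\tilde{Q}^m) = 0$ for all $i \neq 0$ and in fact yields a short exact sequence
\[
0 \to {\iota_{m,1}}_\ast G \to \tilde{Q}^m \to {\iota_{m,m-1}}_\ast \tilde{Q}^{m-1} \to 0
\]
in $\Ap_m$, which gives the desired conclusion $\tilde{Q}^m \in \Ap_m$.

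The only step that requires a little care is the descent of the triangle from $D^b(X_R)$ to $D^b(X_m)$: one must lift the connecting morphism ${\iota_m}_\ast {\iota_{m,m-1}}_\ast \tilde{Q}^{m-1} \to {\iota_m}_\ast {\iota_{m,1}}_\ast G [1]$ from $D(X_R)$ to a morphism in $D(X_m)$. This follows either from the full faithfulness of the pushforward along the closed immersion $\iota_m$, or directly from Lemma \ref{XmXnXm+n} applied in the same spirit as in the proof of Corollary \ref{QmfromXm}; everything after the descent is mechanical bookkeeping with the long exact sequence.
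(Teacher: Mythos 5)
Your proof is correct, but it takes a genuinely different route from the paper's. You induct on $m$ and apply extension-closedness of the heart to the triangle ${\iota_{m,1}}_\ast G \to \tilde{Q}^m \to {\iota_{m,m-1}}_\ast \tilde{Q}^{m-1} \to {\iota_{m,1}}_\ast G[1]$ in $D^b(X_m)$. Note that no separate descent step is needed: Corollary \ref{QmfromXm} already \emph{defines} $\tilde{Q}^m$ as the cone of a morphism $\beta : {\iota_{m,m-1}}_\ast \tilde{Q}^{m-1}[-1] \to {\iota_{m,1}}_\ast G$ in $D(X_m)$, so your triangle is just a rotation of the defining one. (You should, however, drop the appeal to ``full faithfulness of the pushforward along $\iota_m$'' --- pushforward along a closed immersion is not fully faithful at the derived level, since $L\iota_m^\ast {\iota_m}_\ast$ is not the identity; Lemma \ref{XmXnXm+n} is precisely the substitute for this, and it is what Corollary \ref{QmfromXm} uses.) The paper instead argues directly on the object $\tilde{Q}^m$ without induction: it uses the long exact sequence of \eqref{LmIIQtriangle} to bound the cohomological amplitude of $L\iota_m^\ast {\iota_m}_\ast \tilde{Q}^m$, the splitting ${\iota_m}_\ast L\iota_m^\ast {\iota_m}_\ast \tilde{Q}^m \cong {\iota_m}_\ast \tilde{Q}^m[1] \oplus {\iota_m}_\ast \tilde{Q}^m$ from \cite[Corollary 5.8]{Lo1} to place $\tilde{Q}^m$ in $D^{\leq 0}_{\Ap_m}$, and then an analysis of the lowest nonvanishing $\HH^i(\tilde{Q}^m)$ in terms of standard cohomology sheaves to rule out $i<0$. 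Your route is shorter, is consistent with the paper's toolkit (the same extension-closedness argument is used verbatim later for the objects $\tilde{Q}^{m,r}$), and as a bonus produces the short exact sequence $0 \to {\iota_{m,1}}_\ast G \to \tilde{Q}^m \to {\iota_{m,m-1}}_\ast \tilde{Q}^{m-1} \to 0$ of Lemma \ref{Qmflatness-lemma2} essentially for free; the paper's longer derivation of that sequence via diagram \eqref{octahedral6} has the additional purpose of identifying its maps with cohomology of the pulled-back octahedron, which is what the subsequent flatness argument consumes.
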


\begin{proof}
From the long  exact sequence of cohomology of \eqref{LmIIQtriangle} with respect to $\Ap_r$ when  $r=m$, we see that $\HH^i_{\Ap_m} (L\iota_m^\ast \iotam_\ast \tilde{Q}^m)$ is nonzero only for $i= 0, -1$.  Since
\[
\iotam_\ast L\iota_m^\ast \iotam_\ast \tilde{Q}^m \cong \iotam_\ast \tilde{Q}^m[1] \oplus \iotam_\ast \tilde{Q}^m
\]
by \cite[Corollary 5.8]{Lo1}, at least we know  $\tilde{Q}^m$ lies in $D^{\leq 0}_{\Ap_m}(X_m)$.

Now, suppose $i$ is the smallest integer such that $\HH^i (\tilde{Q}^m)\neq 0$.  Suppose $H^{i-1}(\tilde{Q}^m)\neq 0$; then this cohomology lies in $\Coh_{\geq 2}(X_m)$.  Also, $H^{i-1}(\iotam_\ast \tilde{Q}^m) \neq 0$.  Using the isomorphism \[\iotam_\ast L\iota_m^\ast \iotam_\ast \tilde{Q}^m \cong \iotam_\ast \tilde{Q}^m[1] \oplus \iotam_\ast \tilde{Q}^m\] again, we see that $H^{i-2}(\iotam_\ast L\iota_m^\ast \iotam_\ast \tilde{Q}^m) \neq 0$ as well, and so $H^{i-2}(L\iotam_\ast \iotam_\ast \tilde{Q}^m)$ is nonzero and lies in $\Coh_{\geq 2}(X_m)$.  Hence $\HH^{i-1}(L\iota_m^\ast \iotam_\ast \tilde{Q}^m) \neq 0$.

On the other hand, suppose $H^{i-1}(\tilde{Q}^m)=0$.  Then  $H^i(\tilde{Q}^m)\neq 0$, and  $H^i(\tilde{Q}^m)$  lies in $\Coh_{\leq 1}(X_m)$.  As above, this means that $\HH^{i-1}(\iotam_\ast L\iota_m^\ast \iotam_\ast (\tilde{Q}^m))$ is nonzero and lies in $\Coh_{\leq 1}(X_m)$.  Therefore $\HH^{i-1}(L\iota_m^\ast \iotam_\ast \tilde{Q}^m) \neq 0$.

We can thus conclude that if $i$ is the least integer for which $\HH^i(\tilde{Q}^m)$ is nonzero, then the cohomology $\HH^{i-1}(L\iota_m^\ast \iotam_\ast \tilde{Q}^m)$ is nonzero.  However, by the previous lemma, we know $\HH^j(L\iota_m^\ast \iotam_\ast \tilde{Q}^m)$ is nonzero only for $j=0$ or $-1$.  So $i$ must be greater than or equal to 0.  Since we already know $\tilde{Q}^m \in D^{\leq 0}_{\Ap_m}(X_m)$, we can conclude that $\tilde{Q}^m$ actually lies in $\Ap_m$.
\end{proof}

\begin{lemma}\label{objectheartLiicohom}
For any $m \geq 1$, and any $F \in\Ap_m$, we have $\HH^i (L\iota_m^\ast \iotam_\ast F)=0$ for $i\neq 0, -1$.  For $i=0$ or $-1$, $\HH^i (L\iota_m^\ast \iotam_\ast F) \cong F$.
\end{lemma}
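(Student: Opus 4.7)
My plan is to apply the general form of \cite[Corollary 5.8]{Lo1}, which associates to any object $F \in D(X_m)$ an exact triangle
\[
F[1] \to L\iotam^\ast \iotam_\ast F \to F \to F[2]
\]
in $D(X_m)$, expressing the fact that $\iotam$ is a codimension-one regular closed immersion whose conormal bundle is trivial (being the restriction of the principal ideal $(\pi^m)$, whose generator vanishes on $X_m$). This is precisely the construction already used earlier in the section, e.g.\ in the triangle involving $L\iota^\ast \iota_\ast L\iota^\ast I$ that appears in the proof of Proposition \ref{em}, merely upgraded from $L\iota^\ast I$ to a general object and from $m=1$ to arbitrary $m$.

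Given the triangle, I would take the long exact sequence of $\Ap_m$-cohomology,
\[
\cdots \to \HH^{i+1}_{\Ap_m}(F) \to \HH^{i}_{\Ap_m}(L\iotam^\ast \iotam_\ast F) \to \HH^{i}_{\Ap_m}(F) \to \HH^{i+2}_{\Ap_m}(F) \to \cdots,
\]
and exploit the hypothesis $F \in \Ap_m$, which says $\HH^j_{\Ap_m}(F) = F$ for $j=0$ and is zero otherwise. For any $i \notin \{0,-1\}$ both $\HH^{i+1}_{\Ap_m}(F)$ and $\HH^i_{\Ap_m}(F)$ vanish, forcing $\HH^i_{\Ap_m}(L\iotam^\ast \iotam_\ast F) = 0$. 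At $i=0$ the sequence reduces to $0 \to \HH^0_{\Ap_m}(L\iotam^\ast \iotam_\ast F) \to F \to 0$, and at $i=-1$ to $0 \to F \to \HH^{-1}_{\Ap_m}(L\iotam^\ast \iotam_\ast F) \to 0$, yielding the two claimed isomorphisms with $F$.

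The only real concern is ensuring that \cite[Corollary 5.8]{Lo1} really does apply to arbitrary $F \in D(X_m)$ for every $m \geq 1$, rather than only to the specific objects for which it was quoted earlier. A clean workaround, avoiding any such worry, is to argue via a splitting: \cite[Corollary 5.8]{Lo1} in fact supplies the stronger statement $\iotam_\ast L\iotam^\ast \iotam_\ast F \cong \iotam_\ast F[1] \oplus \iotam_\ast F$ in $D(X_R)$ (as was invoked for $F = \tilde{Q}^m$ in the preceding lemma); since $\iotam_\ast$ is fully faithful on the derived category, being the pushforward along a closed immersion, this descends to an isomorphism $L\iotam^\ast \iotam_\ast F \cong F \oplus F[1]$ in $D(X_m)$, and taking $\Ap_m$-cohomology of the direct sum then gives the result immediately, since $F[1]$ contributes $F$ in $\Ap_m$-degree $-1$ and $F$ contributes $F$ in $\Ap_m$-degree $0$, with no other nonzero cohomology.
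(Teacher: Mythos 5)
Your main argument is exactly what the paper's one-line proof (``This follows from \cite[Corollary 5.8]{Lo1}'') is implicitly doing: take the triangle $F[1] \to L\iota_m^\ast \iotam_\ast F \to F \to F[2]$ and read off the $\Ap_m$-cohomology, so the proposal is correct and takes the same route. One caveat on your ``workaround'': the claim that $\iotam_\ast$ is fully faithful on the derived category because it is pushforward along a closed immersion is false in general (e.g.\ $\Hom_{D(X_R)}(\iota_\ast \OO_{X_k}, \iota_\ast \OO_{X_k}[1]) \neq 0$ while the corresponding group on $X_k$ vanishes), so that descent step is not justified as written. Fortunately it is also unnecessary: since $X_m \subset X_R$ is cut out by the single non-zerodivisor $\pi^m$, which acts as zero on any $\OO_{X_m}$-module, the Koszul resolution gives $L\iota_m^\ast \iotam_\ast F \cong F \oplus F[1]$ directly in $D(X_m)$, and either that splitting or your long-exact-sequence argument finishes the proof.
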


\begin{proof}
This follows from \cite[Corollary 5.8]{Lo1}.
\end{proof}

Applying $\HH^0_{\Ap} \circ L\iota^\ast$ to the  diagram (\ref{octahedral1}), we obtain the following commutative diagram, where each straight line is exact at the middle term.  Because the $I^j$ are flat families, the morphisms labelled $\gamma_m, \gamma_{m-1}$ and $\mu_m$ are  surjective in $\Ap$:
\begin{equation}\label{octahedral2}
\xymatrix{
  & & & \HH^0 L\iota^\ast \iota_\ast G \ar[ddd]  \\
  & & &  \\
  & L\iota^\ast I^{m-1} \ar[dr] \ar[uurr] & & \\
 L\iota^\ast I^m \ar[rr] \ar[ur] & & L\iota^\ast I^0 \ar[r]^{\gamma_m} \ar[dr]_{\gamma_{m-1}} & \HH^0 L\iota^\ast Q^m  \ar[d]^{\mu_m} \\
 & & & \HH^0 L\iota^\ast Q^{m-1}
}.
\end{equation}

\begin{lemma}\label{allLiotaQmisom}
For $m>1$, the morphism $\mu_m := \HH^0 L\iota^\ast (Q^m \to Q^{m-1})$ is an isomorphism, and $\HH^0L\iota^\ast Q^m\cong G$ for all $m >0$.
\end{lemma}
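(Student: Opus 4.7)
The plan is to identify $\HH^0 L\iota^\ast Q^m$ with an explicitly computable cokernel inside $\Ap$ and then read off both claims from that identification.

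First I would apply $L\iota^\ast$ to the triangle $I^m \to I^0 \to Q^m \to I^m[1]$ and take the long exact cohomology sequence with respect to the t-structure on $\Ap$. By Lemma \ref{cohomLmIrLmQr}(a) both $L\iota^\ast I^m$ and $L\iota^\ast I^0$ lie in $\Ap$, so the sequence collapses to
\[
0 \to \HH^{-1} L\iota^\ast Q^m \to L\iota^\ast I^m \to L\iota^\ast I^0 \to \HH^0 L\iota^\ast Q^m \to 0
\]
in $\Ap$. Hence $\HH^0 L\iota^\ast Q^m$ is the cokernel, computed in $\Ap$, of the structural map $L\iota^\ast I^m \to L\iota^\ast I^0$.

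Next I would compute this cokernel under the working hypothesis (justified by Proposition \ref{infiniteemBGconstant} after replacing $I^0$ by $I^{m_0}$ for $m_0 \gg 0$) that each step $L\iota^\ast I^{i+1} \to L\iota^\ast I^i$ factors as the surjection $L\iota^\ast I^{i+1} \twoheadrightarrow B^i$ from \eqref{ses1} followed by the inclusion $B^i \hookrightarrow L\iota^\ast I^i$, and that it sends the subobject $B^{i+1} \hookrightarrow L\iota^\ast I^{i+1}$ isomorphically onto $B^i \hookrightarrow L\iota^\ast I^i$. Composing these isomorphisms of the $B^i$'s, the image of $L\iota^\ast I^m \to L\iota^\ast I^0$ is exactly $B^0 = B \hookrightarrow L\iota^\ast I^0$, and therefore the cokernel is $L\iota^\ast I^0/B \cong G^0 = G$, giving $\HH^0 L\iota^\ast Q^m \cong G$ for every $m \geq 1$. (For the base case $m=1$ one may alternatively note that $Q^1 = \iota_\ast G$ by construction of the first elementary modification and apply Lemma \ref{objectheartLiicohom}.)

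Finally, to see that $\mu_m$ is an isomorphism, I would invoke diagram \eqref{octahedral2}: the surjections $\gamma_m : L\iota^\ast I^0 \twoheadrightarrow \HH^0 L\iota^\ast Q^m$ and $\gamma_{m-1} : L\iota^\ast I^0 \twoheadrightarrow \HH^0 L\iota^\ast Q^{m-1}$ satisfy $\mu_m \circ \gamma_m = \gamma_{m-1}$. Under the identifications from the previous paragraph, both $\gamma_m$ and $\gamma_{m-1}$ are the canonical quotient $L\iota^\ast I^0 \twoheadrightarrow L\iota^\ast I^0/B \cong G$; by the universal property of the cokernel the map $\mu_m$ is forced to be the identity on $G$, hence an isomorphism. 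The main obstacle I anticipate is Step 2, namely ensuring that the iterated image really is $B^0$ and not some strictly smaller subobject; this is exactly where the sharp compatibility statement between $B^{i+1} \hookrightarrow L\iota^\ast I^{i+1}$ and $B^i \hookrightarrow L\iota^\ast I^i$ established in the last paragraph of the proof of Proposition \ref{infiniteemBGconstant} is essential.
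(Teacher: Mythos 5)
Your proof is correct and follows essentially the same route as the paper: both arguments use the stabilisation from Proposition \ref{infiniteemBGconstant} to see that the composite $L\iota^\ast I^m \to L\iota^\ast I^0$ has image $B$, so that $\gamma_m$ and $\gamma_{m-1}$ are surjections with the same kernel and $\mu_m$ is forced to be an isomorphism, with the base case $Q^1 = \iota_\ast G$ handling $m=1$. The only slip is cosmetic: the surjection $L\iota^\ast I^{i+1} \twoheadrightarrow B^i$ comes from \eqref{ses2}, not \eqref{ses1}.
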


\begin{proof}
 We are assuming that the central fibres $I^i$ have stabilised, i.e.\  $L\iota^\ast I^i \cong B \oplus G$ for each $i$, and that the pullback of each elementary modification $I^m \to I^{m-1}$ to $D(X_k)$ looks like
\begin{align*}
  L\iota^\ast I^m \cong B\oplus G   &\to    B \oplus G \cong L\iota^\ast I^{m-1},
\end{align*}
which takes $B$ isomorphically onto $B$ and $G$ to 0.  Hence $\gamma_m, \gamma_{m-1}$ are surjective with the same kernel.  Thus $\mu_m$ is an isomorphism for each $m>1$.  That the central fibres have stabilised also means $Q^1 = \iota_\ast G$.  Therefore, $\HH^0 L\iota^\ast Q^1 \cong G$.  By the previous paragraph, $\HH^0 L\iota^\ast Q^m \cong G$ for all $m>1$.
\end{proof}

\subsubsection{The Inverse System}\label{sssec-invsysconst}

In order to construct a morphism $I^0 \to Q_R$ in $D^b(X_R)$ such that its pullback to $X_K$ is a destabilising quotient, we need to show that the surjective morphisms in $\Ap_m$
\[
\xymatrix{
  L\iota_m^\ast I^0 \arsurj[r]^(.4){\al_m} & \HH^0 L\iota_m^\ast Q^m
  }
\]
form an inverse system with respect to the functors $\HH^0_{\Ap_{m'}} \circ L\iota^\ast_{m,m'} : \Ap_m \to \Ap_{m'}$.  Let us take a moment to clarify what this means.  Apply the functor $\HH^0_{\Ap_m}  L\iota^\ast_m$ to diagram \eqref{octahedral1}, and the lower right corner of the diagram becomes
\begin{equation}
\xymatrix{
L\iota_{m}^\ast I^0 \ar@{>>}[r]^(.4){\al_m} \ar@{>>}[dr] & \HH^0 L\iota_{m}^\ast  Q^m  \ar@{>>}[d]  \\
& \HH^0 L\iota_{m}^\ast Q^{m-1}
}.
\end{equation}

If we further apply the functor $\HH^0 L\iota_{m,m-1}^\ast$, we obtain
\begin{equation}
\xymatrix{
L\iota_{m-1}^\ast I^0 \ar@{>>}[rr]^{\HH^0L\iota_{m,m-1}^\ast\al_m} \ar@{>>}[drr]^{\al_{m-1}} & & \HH^0 L\iota_{m-1}^\ast  Q^m  \ar@{>>}[d]^{q_{m-1}}  \\
& & \HH^0 L\iota_{m-1}^\ast Q^{m-1}
}
\end{equation}
where we define the vertical map to be $q_{m-1}$.  Here we are using the isomorphism of functors $$\HH^0 L\iota_{m'}^\ast \cong \HH^0 L\iota_{m,m'}^\ast \HH^0 L\iota^\ast_m$$ for $1 \leq m' < m$.  Repeating this construction for $\al_{m-1}$, we obtain a commutative diagram
\begin{equation}
\xymatrix{
L\iota_{m-2}^\ast I^0 \ar[rrr]^{\HH^0 L\iota_{m,m-2}^\ast \al_m} \ar[ddrrr]^(.5){\HH^0 L\iota_{m-1,m-2}^\ast \al_{m-1}} \ar[dddrrr]_{\al_{m-2}} && & \HH^0 L\iota_{m-2}^\ast Q^m \ar[dd]^{\HH^0 L\iota_{m-1,m-2}^\ast q_{m-1}} \\
& & & \\
& && \HH^0 L\iota_{m-2}^\ast Q^{m-1} \ar[d]^{q_{m-2}} \\
& && \HH^0 L\iota_{m-2}^\ast Q^{m-2}
}.
\end{equation}

Now we see that, if we can show that each $q_{m-1}$ is an isomorphism, then we would have the inverse system described above.

By Lemma \ref{allLiotaQmisom}, $q_1$ is an isomorphism (since $q_1$ is just $\mu_2$ in the notation of that lemma).  To show that $q_m$ is an isomorphism for all $m \geq 1$, we make use of the following:

\begin{lemma}
If $\tilde{Q}^{m-1}$ is a flat family of objects in $\Ap$ over $\Spec R/\pi^{m-1}$, then $q_{m-1}$ is an isomorphism.
\end{lemma}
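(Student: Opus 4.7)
\emph{Proof plan.} My plan is to apply $\HH^i_{\Ap_{m-1}} \circ L\iota_{m-1}^\ast$ to the distinguished triangle $\iota_\ast G \to Q^m \to Q^{m-1} \to \iota_\ast G[1]$ coming from the octahedral diagram \eqref{octahedral1}, extract the resulting $\Ap_{m-1}$-cohomology long exact sequence, and reduce ``$q_{m-1}$ is an isomorphism'' to the surjectivity of a connecting map $\partial$ in $\Ap_{m-1}$. I will then verify surjectivity of $\partial$ by transferring the question to $X_k$, where the terminal map of the analogous long exact sequence is $\mu_m$, known to be an isomorphism by Lemma \ref{allLiotaQmisom}.

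Writing $Q^{m-1} = (\iota_{m-1})_\ast \tilde{Q}^{m-1}$ and $\iota_\ast G = (\iota_{m-1})_\ast (\iota_{m-1,1})_\ast G$, Lemma \ref{objectheartLiicohom} (applied with $m-1$ in place of $m$) gives that the $\Ap_{m-1}$-cohomology of both $L\iota_{m-1}^\ast Q^{m-1}$ and $L\iota_{m-1}^\ast \iota_\ast G$ is concentrated in degrees $0$ and $-1$, with values $\tilde{Q}^{m-1}$ and $(\iota_{m-1,1})_\ast G$ respectively. Combined with Lemma \ref{cohomLmIrLmQr}(b), the long exact sequence collapses to
\[
0 \to (\iota_{m-1,1})_\ast G \to \HH^{-1} L\iota_{m-1}^\ast Q^m \to \tilde{Q}^{m-1} \overset{\partial}{\to} (\iota_{m-1,1})_\ast G \to \HH^0 L\iota_{m-1}^\ast Q^m \overset{q_{m-1}}{\to} \tilde{Q}^{m-1} \to 0.
\]
In particular $q_{m-1}$ is automatically surjective, and it is an isomorphism precisely when the connecting map $\partial$ is surjective.

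By the flatness hypothesis, $L\iota_{m-1,1}^\ast \tilde{Q}^{m-1} \in \Ap$. The adjunction $L\iota_{m-1,1}^\ast \dashv (\iota_{m-1,1})_\ast$ therefore identifies $\partial$ with a unique morphism $\tilde{\partial} : L\iota_{m-1,1}^\ast \tilde{Q}^{m-1} \to G$ in $\Ap$, via the factorisation
\[
\tilde{Q}^{m-1} \twoheadrightarrow (\iota_{m-1,1})_\ast L\iota_{m-1,1}^\ast \tilde{Q}^{m-1} \overset{(\iota_{m-1,1})_\ast \tilde{\partial}}{\longrightarrow} (\iota_{m-1,1})_\ast G,
\]
the first arrow being the adjunction unit, which is a surjection in $\Ap_{m-1}$ (a Nakayama-type consequence of flatness). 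By exactness of $(\iota_{m-1,1})_\ast$, the surjectivity of $\partial$ thus reduces to the surjectivity of $\tilde{\partial}$.

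Running the analogous cohomological analysis on $L\iota^\ast$ of the same triangle, the flatness of $\tilde{Q}^{m-1}$ identifies $\HH^i L\iota^\ast Q^{m-1}$ with $L\iota_{m-1,1}^\ast \tilde{Q}^{m-1}$ for $i=0,-1$, and the resulting long exact sequence on $X_k$ terminates in $L\iota_{m-1,1}^\ast \tilde{Q}^{m-1} \overset{\partial_k}{\to} G \to \HH^0 L\iota^\ast Q^m \overset{\mu_m}{\to} L\iota_{m-1,1}^\ast \tilde{Q}^{m-1} \to 0$. Since $\mu_m$ is an isomorphism by Lemma \ref{allLiotaQmisom}, $\partial_k$ is surjective; naturality of the connecting map under $L\iota_{m-1,1}^\ast$ together with the adjunction then identifies $\partial_k$ with $\tilde{\partial}$. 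The main technical point, and where I expect most of the work to lie, is in justifying the flatness-dependent identifications (the surjectivity of the adjunction unit in $\Ap_{m-1}$, and the equality $\partial_k = \tilde{\partial}$): once these are granted, $\tilde{\partial} = \partial_k$ is surjective, whence $\partial$ is surjective and $q_{m-1}$ is an isomorphism.
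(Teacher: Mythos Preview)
Your approach is different from the paper's, and considerably more roundabout. The paper exploits the fact that $q_{m-1}$ is already known to be \emph{surjective} from the commutative triangle in which it is defined (since $\alpha_{m-1}$ factors through it). Setting $K := \ker(q_{m-1})$ in $\Ap_{m-1}$, one pulls the short exact sequence $0 \to K \to \HH^0 L\iota_{m-1}^\ast Q^m \to \tilde{Q}^{m-1} \to 0$ back to $X_k$ via $L\iota_{m-1,1}^\ast$; flatness of $\tilde{Q}^{m-1}$ kills the relevant $\HH^{-1}$, so one obtains a short exact sequence $0 \to \HH^0 L\iota_{m-1,1}^\ast K \to \HH^0 L\iota^\ast Q^m \to L\iota_{m-1,1}^\ast \tilde{Q}^{m-1} \to 0$ in $\Ap$. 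Both the last two terms are isomorphic to $G$, so the surjection is an isomorphism, forcing $\HH^0 L\iota_{m-1,1}^\ast K = 0$, whence $K = 0$ by \cite[Lemma 5.5]{Lo1}. That is the entire argument; no long exact sequence, no adjunction.

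Your route sets up the long exact sequence correctly and the reduction to surjectivity of $\partial$ is fine, as is the claim that the unit $\tilde{Q}^{m-1} \to (\iota_{m-1,1})_\ast L\iota_{m-1,1}^\ast \tilde{Q}^{m-1}$ is surjective in $\Ap_{m-1}$ under flatness. The real issue is the step you yourself flag: the identification $\partial_k = \tilde{\partial}$. Unwinding definitions, $\tilde{\partial}$ is (up to the counit isomorphism) $\HH^0 L\iota_{m-1,1}^\ast(\partial)$, while $\partial_k$ is $\HH^{-1}$ of $L\iota_{m-1,1}^\ast$ applied to the full map $L\iota_{m-1}^\ast Q^{m-1} \to L\iota_{m-1}^\ast \iota_\ast G[1]$. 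To match these you need truncation to commute with $L\iota_{m-1,1}^\ast$ on both source and target. On the source side this works, by flatness of $\tilde{Q}^{m-1}$. But on the target side $L\iota_{m-1,1}^\ast\bigl((\iota_{m-1,1})_\ast G\bigr)$ is \emph{not} concentrated in a single degree, so $\tau^{\geq -1}$ does not commute with $L\iota_{m-1,1}^\ast$ there, and the identification is not automatic. It can likely be pushed through by factoring through the composite adjunction $L\iota^\ast \dashv \iota_\ast$ and chasing several counit diagrams, but this is substantially more work than the paper's five-line argument --- and the cleanest way to close the gap is to look at $K = \ker(q_{m-1}) = \operatorname{coker}(\partial)$ directly and apply Nakayama, which is precisely the paper's proof.
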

\begin{proof}
Define $K$ to be the kernel of $q_{m-1}$ in $\Ap_{m-1}$, so that we have a short exact sequence
\[0 \to K \to \HH^0 L\iota_{m-1}^\ast Q^m \overset{q_{m-1}}{\longrightarrow} \HH^0 L\iota_{m-1}^\ast Q^{m-1} \cong \tilde{Q}^{m-1} \to 0. \]
Pulling back this exact triangle via $L\iota_{m-1,1}^\ast$, and then applying the functor $\HH^0_{\Ap}$, we get the exact sequence in $\Ap$
$$ 0 \to \HH^0 L\iota_{m-1,1}^\ast K \to \HH^0 L\iota^\ast Q^m \to L\iota_{m-1,1}^\ast \tilde{Q}^{m-1} \to 0$$
(this is where we use the flatness assumption on $\tilde{Q}^{m-1}$).  Since $\HH^0 L\iota^\ast Q^m$ and $L\iota_{m-1,1}^\ast \tilde{Q}^{m-1}$ are both isomorphic to $G$, the surjection between them is an isomorphism, forcing the cohomology $\HH^0 L\iota_{m-1,1}^\ast K$ to be zero.  By \cite[Lemma 5.5]{Lo1}, $K=0$.  That is, $q_{m-1}$ is an isomorphism.
\end{proof}

This lemma reduces the problem of constructing an inverse system to the problem of showing the flatness of $\tilde{Q}^m$ over $\Spec R/\pi^m$ for all $m \geq 1$.  We can solve the latter problem in two steps: (1) show that, for $m \geq 1$, if $\tilde{Q}^m$ is flat then $\tilde{Q}^{2m}$ is also flat; (2) show that, for $m\geq 3$, if $\tilde{Q}^m$ is flat, then $\tilde{Q}^{m-1}$ is also flat.  Since $\tilde{Q}^1 \cong G$ is trivially flat over $\Spec R/\pi$, we will then have flatness for all $\tilde{Q}^m$.

\subsubsection{Flatness of $\tilde{Q}^m$ in General}

\begin{lemma}\label{Qmflatness-lemma1}
Let $m > 2$.  Suppose $A,B$ are objects in $\Ap_{m-1}$ and $\Ap_1$, respectively.  Given a morphism $\bar{\beta} : A \to {\iota_{m-1,1}}_\ast B$, if $\HH^0 L\iota_{m-1,1}^\ast \bar{\beta}=0$, then $\bar{\beta}$ itself is 0.
\end{lemma}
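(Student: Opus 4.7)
The plan is to translate the hypothesis on $\bar{\beta}$ back through the adjunction $L\iota_{m-1,1}^\ast \dashv {\iota_{m-1,1}}_\ast$ and then exploit the fact that $B$ lies in the heart $\Ap_1$ to conclude that the adjoint morphism is already zero.

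First, I would pass to the adjoint $\beta := \ep_B \circ L\iota_{m-1,1}^\ast \bar{\beta} : L\iota_{m-1,1}^\ast A \to B$ in $D(X_1)$, where $\ep_B$ is the counit of the adjunction at $B$. Since adjunction gives a bijection on $\Hom$-groups, it is equivalent to show $\beta = 0$.

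Next, because $B$ lies in the heart $\Ap_1$, I would invoke the standard observation that any morphism in $D(X_1)$ from an object $X$ to $B$ is detected on the zeroth $\Ap_1$-cohomology: applying the two truncation triangles
\[
\tau^{\leq -1}_{\Ap_1}X \to X \to \tau^{\geq 0}_{\Ap_1}X, \qquad \HH^0_{\Ap_1}X \to \tau^{\geq 0}_{\Ap_1}X \to \tau^{\geq 1}_{\Ap_1}X
\]
together with the defining vanishings $\Hom(D^{\leq -1}_{\Ap_1}, D^{\geq 0}_{\Ap_1}) = 0$ and $\Hom(D^{\geq 1}_{\Ap_1}, D^{\leq 0}_{\Ap_1}) = 0$ yields a natural bijection $\Hom_{D(X_1)}(X, B) \cong \Hom_{\Ap_1}(\HH^0_{\Ap_1}X, B)$. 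Applied to $X = L\iota_{m-1,1}^\ast A$, this reduces the problem to proving $\HH^0_{\Ap_1}(\beta) = 0$.

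Finally, by functoriality of $\HH^0_{\Ap_1}$, one has $\HH^0_{\Ap_1}(\beta) = \HH^0_{\Ap_1}(\ep_B) \circ \HH^0_{\Ap_1}(L\iota_{m-1,1}^\ast \bar{\beta})$. The hypothesis kills the right factor outright, while $\HH^0_{\Ap_1}(\ep_B)$ is an isomorphism by the $\iota_{m-1,1}$-analogue of Lemma \ref{objectheartLiicohom}: the proof of \cite[Corollary 5.8]{Lo1} is a local Tor computation relative to the closed immersion and goes through verbatim when $\iota_m : X_m \hookrightarrow X_R$ is replaced with $\iota_{m-1,1} : X_1 \hookrightarrow X_{m-1}$ (this is where the hypothesis $m > 2$, needed for $\iota_{m-1,1}$ to be defined, enters). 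Combining these two facts gives $\HH^0_{\Ap_1}(\beta) = 0$, hence $\beta = 0$, and adjunction returns $\bar{\beta} = 0$. The only nontrivial point to verify is the extension of Lemma \ref{objectheartLiicohom} from $\iota_m$ to $\iota_{m-1,1}$, which I expect to be essentially routine.
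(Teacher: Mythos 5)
Your argument is correct and follows essentially the same route as the paper: pass through the adjunction $L\iota_{m-1,1}^\ast \dashv {\iota_{m-1,1}}_\ast$ to the adjoint morphism $L\iota_{m-1,1}^\ast A \to B$, and then use the fact that a morphism into an object of the heart is detected on $\HH^0$ (the paper phrases this via the naturality square for $\mathrm{id} \to \tau^{\geq 0}$ rather than the Hom-group bijection, but the content is identical). The only remark worth making is that your appeal to $\HH^0(\ep_B)$ being an isomorphism is superfluous --- once $\HH^0(L\iota_{m-1,1}^\ast\bar{\beta})=0$, the composite $\HH^0(\beta)$ vanishes regardless --- so the ``only nontrivial point'' you flag need not be verified at all.
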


\begin{proof}
Since the truncation $\tau^{\geq 0}$ is left adjoint to the embedding functor, we have a morphism of functors $\mbox{id} \to \tau^{\geq 0}_{\Ap_{m-1}} : D(X_{m-1})  \to D^{\geq 0}_{\Ap_{m-1}}(X_{m-1})$.  This yields a commutative diagram
\begin{equation*}
\xymatrix{
  L\iota_{m-1,1}^\ast A \ar[rr]^(.4){L\iota_{m-1,1}^\ast \bar{\beta}} \ar[d] && L\iota_{m-1,1}^\ast {\iota_{m-1,1}}_\ast B \ar[rr]^(.6)\eta \ar[d] && B \ar[d]^{=} \\
  \HH^0 L\iota_{m-1,1}^\ast A \ar[rr]^(.4){\HH^0 L\iota_{m-1,1}^\ast \bar{\beta}} && \HH^0 L\iota_{m-1,1}^\ast {\iota_{m-1,1}}_\ast B \ar[rr]^(.6){\HH^0 \eta} && B
}
\end{equation*}
where $\eta$ is the adjunction map and the right-most vertical map is the identity.  Since $\HH^0 L\iota_{m-1,1}^\ast \bar{\beta}=0$ by assumption, $\eta \circ L\iota_{m-1,1}^\ast \bar{\beta}$ is the zero map since the outer edges of the diagram commute.  However, $\eta \circ L\iota_{m-1,1}^\ast \bar{\beta}$ and $\bar{\beta}$ correspond to each other via the adjunction
\[\Hom_{D(X_k)}(L\iota_{m-1,1}^\ast A, B) \overset{\thicksim}{\to} \Hom_{D(X_{m-1})} (A, {\iota_{m-1,1}}_\ast B).\]
Hence $\bar{\beta}$ itself is zero.
\end{proof}

\begin{lemma}\label{Qmflatness-lemma2}
Let $m \geq 2$.  Then we have the following short exact sequence in $\Ap_m$:
\begin{equation}\label{Qmflatseq1}
 0 \to {\iota_{m,1}}_\ast G \to \tilde{Q}^m \to {\iota_{m,m-1}}_\ast \tilde{Q}^{m-1} \to 0.
\end{equation}
\end{lemma}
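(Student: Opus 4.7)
The plan is to lift the defining exact triangle in $D(X_R)$,
\[
\iota_\ast G \to Q^m \to Q^{m-1} \to \iota_\ast G[1],
\]
to an exact triangle in $D(X_m)$ whose terms realise the sequence in the lemma, and then extract the claimed short exact sequence in $\Ap_m$ by passing to cohomology.

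First, I would note that $\iota_\ast G = {\iota_m}_\ast {\iota_{m,1}}_\ast G$ and, by Corollary \ref{QmfromXm}, $Q^{m-1} \cong {\iota_{m-1}}_\ast \tilde{Q}^{m-1} = {\iota_m}_\ast ({\iota_{m,m-1}}_\ast \tilde{Q}^{m-1})$. Thus the connecting morphism $Q^{m-1}[-1] \to \iota_\ast G$ is a morphism in $D(X_R)$ between two objects of the form ${\iota_m}_\ast(-)$, one coming from $X_{m-1}$ and the other from $X_1$. Since $(m-1)+1 = m$, Lemma \ref{XmXnXm+n} applies and produces a morphism
\[
\beta : {\iota_{m,m-1}}_\ast \tilde{Q}^{m-1}[-1] \to {\iota_{m,1}}_\ast G \quad \text{in } D(X_m)
\]
with $\iotam_\ast \beta$ equal to the connecting map above. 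This $\beta$ is precisely the morphism whose cone is taken to define $\tilde{Q}^m$ in the proof of Corollary \ref{QmfromXm}, so by construction we have an exact triangle in $D(X_m)$:
\[
{\iota_{m,1}}_\ast G \to \tilde{Q}^m \to {\iota_{m,m-1}}_\ast \tilde{Q}^{m-1} \to {\iota_{m,1}}_\ast G[1].
\]

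Next, I would take cohomology of this triangle with respect to the heart $\Ap_m$. Since pushforward along a closed immersion is exact and sends $\Ap_1$ (resp.\ $\Ap_{m-1}$) into $\Ap_m$, as recorded in the Notation section, both ${\iota_{m,1}}_\ast G$ and ${\iota_{m,m-1}}_\ast \tilde{Q}^{m-1}$ lie in $\Ap_m$; here I use the preceding lemma to know that $\tilde{Q}^{m-1} \in \Ap_{m-1}$. The preceding lemma also gives $\tilde{Q}^m \in \Ap_m$. All three terms of the triangle are therefore concentrated in $\Ap_m$-degree zero, so the associated long exact sequence of $\HH^i_{\Ap_m}$ collapses to
\[
0 \to {\iota_{m,1}}_\ast G \to \tilde{Q}^m \to {\iota_{m,m-1}}_\ast \tilde{Q}^{m-1} \to 0,
\]
which is the desired short exact sequence.

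There is no real obstacle in this step; all the hard work is already contained in the construction of $\tilde{Q}^m$ via Lemma \ref{XmXnXm+n} and Corollary \ref{QmfromXm}, and in the previous lemma identifying $\tilde{Q}^m$ as an object of $\Ap_m$. The only point that requires care is the identification of the morphism $\beta$ provided by Lemma \ref{XmXnXm+n} with the morphism actually used to define $\tilde{Q}^m$, which follows directly from the construction in Corollary \ref{QmfromXm}.
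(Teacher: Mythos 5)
Your proof is correct, but it takes a genuinely different and much more economical route than the paper's. The paper never invokes the triangle in $D(X_m)$ that defines $\tilde{Q}^m$ as a cone; instead it applies $\HH^0_{\Ap_m}\circ L\iota_m^\ast$ to the octahedron for $I^m \to I^{m-1} \to I^0$ in $D(X_R)$. Because $L\iota_m^\ast \iotam_\ast F \cong F \oplus F[1]$ for $F \in \Ap_m$, each of $L\iota_m^\ast \iota_\ast G$, $L\iota_m^\ast Q^m$, $L\iota_m^\ast Q^{m-1}$ has cohomology in two degrees, so the long exact sequence one obtains is the six-term sequence
\[
0 \to {\iota_{m,1}}_\ast G \to \tilde Q^m \to {\iota_{m,m-1}}_\ast\tilde Q^{m-1} \overset{\beta}{\to} {\iota_{m,1}}_\ast G \to \tilde Q^m \to {\iota_{m,m-1}}_\ast\tilde Q^{m-1}\to 0,
\]
and essentially the entire content of the paper's proof is showing that the connecting map $\beta$ vanishes (via a second application of the octahedral axiom, the $2$-periodicity of $\HH^i L\iota_{m,1}^\ast{\iota_{m,m-1}}_\ast\tilde Q^{m-1}$, and Lemma \ref{Qmflatness-lemma1}). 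You sidestep all of this by observing that the triangle $\iota_\ast G \to Q^m \to Q^{m-1} \to \iota_\ast G[1]$ already descends to $D(X_m)$ --- which is literally how $\tilde{Q}^m$ is constructed in Corollary \ref{QmfromXm} --- and that an exact triangle all of whose vertices lie in the heart induces a short exact sequence there. The one point worth flagging is that your argument leans on the particular realisation of $\tilde{Q}^m$ as $\cone(\beta)$ for a chosen lift $\beta$ of the connecting morphism (such lifts need not be unique), whereas the paper's argument uses only the property $\iotam_\ast\tilde Q^m \cong Q^m$ together with the identification $\HH^0 L\iota_m^\ast Q^m \cong \tilde Q^m$. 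Since the paper itself fixes $\tilde{Q}^m$ by the cone construction, this does not affect the validity of your proof, and the surjection you produce pushes forward to the map $Q^m \to Q^{m-1}$, which is what the subsequent constructions of $R^{m,m-r}$ require.
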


\begin{proof}
We begin with the commutative triangle
\begin{equation*}
\xymatrix{
 &  L\iota_m^\ast I^{m-1}\ar[dr] &  \\
L\iota_m^\ast I^m \ar[rr] \ar[ur] & & L\iota_m^\ast I^0
}.
\end{equation*}
Applying the octahedral axiom and taking cohomology with respect to $\Ap_m$, we obtain the commutative diagram
\begin{equation}\label{octahedral6}
\xymatrix{
& & & & & {\iota_{m,1}}_\ast G \ar[ddd] \\
& & {\iota_{m,m-1}}_\ast \tilde{Q}^{m-1} \ar[rrru]^\beta \ar@{^{(}->}[rd]^{\beta_1} & & & \\
& & & L\iota_m^\ast I^{m-1} \ar[dr]^\theta \ar[rruu] & & \\
& \tilde{Q}^m \ar@{^{(}->}[r] \ar[ruu] & L\iota_m^\ast I^m \ar[rr] \ar[ur] & & L\iota_m^\ast I^0 \ar@{->>}[r] \ar@{->>}[rd] & \tilde{Q}^m \ar@{->>}[d] \\
& {\iota_{m,1}}_\ast G \ar@{^{(}->}[ur] \ar@{^{(}->}[u] & & & & {\iota_{m,m-1}}_\ast \tilde{Q}^{m-1}
}
\end{equation}
where the cohomology objects are computed using the flatness of the $I^j$ and Lemma \ref{objectheartLiicohom}.  Define $\theta, \beta, \beta_1$ as above.

Now we move our attention to the commutative triangle
\begin{equation*}
\xymatrix{
  & \image \theta \ar[dr] & \\
L\iota_m^\ast I^{m-1} \ar[rr]^\theta \ar[ur] & & L\iota_m^\ast I^0
}.
\end{equation*}

Applying the octahedral axiom to it gives
\begin{equation*}
\xymatrix{
 & & & {\iota_{m,m-1}}_\ast \tilde{Q}^{m-1} [1] \ar[ddd] \\
& & & \\
& \image \theta \ar[rruu] \ar[dr] & & \\
L\iota_m^\ast I^{m-1} \ar[ur] \ar[rr]^\theta & & L\iota_m^\ast I^0 \ar[r] \ar[rd] & L\iota_m^\ast Q^{m-1} \ar[d] \\
& & & {\iota_{m,m-1}}_\ast \tilde{Q}^{m-1}
}.
\end{equation*}
Note that all the objects in this diagram lie in $D^{\leq 0}_{\Ap_m}(X_m)$.  Pulling back this entire diagram via $L\iota_{m,1}^\ast$, and taking cohomology with respect to $\Ap$, we obtain the following commutative diagram
\begin{equation}\label{octahedral5}
\def\objectstyle{\scriptstyle}
\def\labelstyle{\scriptstyle}
\xymatrix@-1.5pc@R=1pc{
& & & & & 0 \ar[ddd] \\
& & \HH^{-1} L\iota_{m,1}^\ast {\iota_{m,m-1}}_\ast \tilde{Q}^{m-1} \ar[rrru] \ar[dr] & & & \\
& & & \HH^0 L\iota_{m,1}^\ast (\image \theta) \ar[rruu] \ar[dr] & & \\
& G \cong \HH^{-1} L\iota^\ast Q^{m-1} \ar@{^{(}->}[r] \ar[ruu] & L\iota^\ast I^{m-1} \ar[rr]_{\HH^0 L\iota_{m,1}^\ast \theta} \ar[ru] & & L\iota^\ast I^0 \ar@{->>}[r] \ar@{->>}[dr] & \HH^0 L\iota^\ast Q^{m-1} \ar@{->>}[d] \\
& & & & & \HH^0 L\iota_{m,1}^\ast {\iota_{m,m-1}}_\ast \tilde{Q}^{m-1} \\
& G \cong \HH^0 L\iota_{m,1}^\ast {\iota_{m,m-1}}_\ast \tilde{Q}^{m-1} \ar[uu]^\ep \ar[uur]_{\HH^0 L\iota_{m,1}^\ast \beta_1} & & & &  \\
& \HH^{-2} L\iota_{m,1}^\ast {\iota_{m,m-1}}_\ast \tilde{Q}^{m-1} \ar@{^{(}->}[u]^{\ep'} & & & &
}
\end{equation}
where the three straight lines passing through the triangle in the center are exact sequences.  Define $\ep, \ep'$ as above.  Also, the sequence of seven terms that winds around that triangle is the long exact sequence of cohomology for the exact triangle $$L\iota_{m,1}^\ast ( {\iota_{m,m-1}}_\ast \tilde{Q}^{m-1}[1] \to L\iota_m^\ast Q^{m-1} \to {\iota_{m,m-1}}_\ast \tilde{Q}^{m-1} \to {\iota_{m,m-1}}_\ast \tilde{Q}^{m-1}[2] ).$$
Here is why $\HH^0 L\iota_{m,1}^\ast {\iota_{m,m-1}}_\ast \tilde{Q}^{m-1} \cong G$:
\begin{align*}
  \HH^0 L\iota_{m,1}^\ast ({\iota_{m,m-1}}_\ast \tilde{Q}^{m-1}) &\cong \HH^0 L\iota_{m-1,1}^\ast (\HH^0 L\iota_{m,m-1}^\ast {\iota_{m,m-1}}_\ast \tilde{Q}^{m-1}) \text{ by \cite[Corollary 5.4]{Lo1}}\\
  &\cong \HH^0 L\iota_{m-1,1}^\ast \tilde{Q}^{m-1} \text{ by \cite[Lemma 5.6]{Lo1}}\\
  &\cong \HH^0 L\iota_{m-1,1}^\ast (\HH^0 L\iota_{m-1}^\ast Q^{m-1}) \\
  &\cong \HH^0 L\iota^\ast Q^{m-1} \text{ by the right t-exactness of $L\iota_{m-1,1}^\ast$} \\
  &\cong G.
\end{align*}

To understand the other cohomology objects of $\HH^0 L\iota_{m,1}^\ast ({\iota_{m,m-1}}_\ast \tilde{Q}^{m-1})$, we look at
\begin{multline*}
{\iota_{m,1}}_\ast L\iota_{m,1}^\ast {\iota_{m,m-1}}_\ast \tilde{Q}^{m-1} \cong \\
 {\iota_{m,m-1}}_\ast \tilde{Q}^{m-1} \Lo_{\OO_{X_m}} [\cdots \to \OO_{X_m} \overset{\pi}{\to} \OO_{X_m} \overset{\pi^{m-1}}{\to} \OO_{X_m} \overset{\pi}{\to} \OO_{X_m} \to 0].
\end{multline*}
Since multiplication by $\pi^{m-1}$ induces the zero map on objects pushed forward from $D(X_{m-1})$, such as ${\iota_{m,m-1}}_\ast \tilde{Q}^{m-1}$, we have
\begin{equation*}
{\iota_{m,1}}_\ast L\iota_{m,1}^\ast {\iota_{m,m-1}}_\ast \tilde{Q}^{m-1}
   \cong   {\iota_{m,m-1}}_\ast \tilde{Q}^{m-1} \Lo_{\OO_{X_m}}  \left( \oplus_{i \geq 0} [\OO_{X_m} \overset{\pi}{\to} \OO_{X_m}][2i] \right).
\end{equation*}
Therefore
\begin{equation*}
  \HH^i L\iota_{m,1}^\ast {\iota_{m,m-1}}_\ast \tilde{Q}^{m-1} \text{ are isomorphic for all even $i\leq 0$}
\end{equation*}
and in particular are isomorphic to $G$.  As a consequence,  in figure \eqref{octahedral5}, the map $\ep'$ must be an isomorphism.  So $\ep$ must be the zero map, implying $\HH^0 L\iota_{m,1}^\ast \beta_1$ is also zero.  Thus $\HH^0 L\iota_{m,1}^\ast \beta$ itself is zero from the commutativity of diagram \eqref{octahedral6}.

Now, by \cite[Lemma 5.7]{Lo1}, $\beta = {\iota_{m,m-1}}_\ast \bar{\beta}$ for some $\bar{\beta} : \tilde{Q}^{m-1} \to {\iota_{m-1,1}}_\ast G$.  This gives us
\begin{align*}
  0 &= \HH^0 L\iota_{m,1}^\ast \beta \\
  &= \HH^0 L\iota_{m,1}^\ast {\iota_{m,m-1}}_\ast \bar{\beta} \\
  &\cong \HH^0 L\iota_{m-1,1}^\ast (\HH^0 L\iota_{m,m-1}^\ast {\iota_{m,m-1}}_\ast \bar{\beta}) \text{ by \cite[Corollary 5.4]{Lo1}}\\
  &\cong \HH^0 L\iota_{m-1,1}^\ast \bar{\beta} \text{ by \cite[Lemma 5.6]{Lo1}}.
\end{align*}
Using Lemma \ref{Qmflatness-lemma1}, we conclude $\bar{\beta}=0$ and thus $\beta =0$.  This means that, finally, the sequence of outer six terms in Figure \eqref{octahedral6} breaks up into two short exact sequences, one of them being the one we want.
\end{proof}


Before we move on, let us define the objects $R^{m,m-r}$ and $Q^{m,r}$.  These definitions will help clarify our arguments.

\noindent\textit{The objects $R^{m,m-r}$.}  Since we have an exact sequence as in Lemma \ref{Qmflatness-lemma2} for each $m \geq 2$, we have a sequence of surjections
$$ \tilde{Q}^m \twoheadrightarrow {\iota_{m,m-1}}_\ast \tilde{Q}^{m-1} \twoheadrightarrow {\iota_{m,m-2}}_\ast \tilde{Q}^{m-2} \twoheadrightarrow \cdots \twoheadrightarrow {\iota_{m,m-r}}_\ast \tilde{Q}^{m-r}$$
in $\Ap_m$ for $m-r \geq 1$.  Define $R^{m,m-r} \in \Ap_m$ to be the kernel of this composition of surjections.

On the other hand, if we apply the octahedral axiom to the commutative triangle
\begin{equation*}
  \xymatrix{
  & {\iota_{m,m-1}}_\ast \tilde{Q}^{m-1} \arsurj[dr] &\\
  \tilde{Q}^m \arsurj[ur] \arsurj[rr] & & {\iota_{m,m-2}}_\ast \tilde{Q}^{m-2}
  },
\end{equation*}
we get
\begin{equation*}
\xymatrix{
 & & & {\iota_{m,1}}_\ast G [1] \ar[ddd]\\
& & & \\
& {\iota_{m,m-1}}_\ast \tilde{Q}^{m-1} \ar[rruu] \ar[dr] & & \\
\tilde{Q}^m \arsurj[ur] \arsurj[rr] & & {\iota_{m,m-2}}_\ast \tilde{Q}^{m-2} \ar[r] \ar[dr] & R^{m,m-2}[1] \ar[d] \\
& & & {\iota_{m,1}}_\ast G[1]
}.
\end{equation*}
That is, we have a short exact sequence
\begin{equation*}
0 \to {\iota_{m,1}}_\ast G \to R^{m,m-2} \to {\iota_{m,1}}_\ast G \to 0 \text{\quad in $\Ap_m$}.
\end{equation*}
Iterating this process (by looking at the output of the octahedral axiom applied to the composition $\tilde{Q}^m \to {\iota_{m,m-(r-1)}}_\ast \tilde{Q}^{m-(r-1)} \to {\iota_{m,m-r}}_\ast \tilde{Q}^{m-r}$), we obtain short exact sequences of the form
\begin{equation}\label{GQRsequence}
0 \to R^{m,m-(r-1)} \to R^{m,m-r} \to {\iota_{m,1}}_\ast G \to 0
\end{equation}
in $\Ap_m$ for each $m > r \geq 2$.  Comparing these with the short exact sequence of Lemma \ref{Qmflatness-lemma2}, we see that $R^{m,m-r}$ and ${\iota_{m,r}}_\ast \tilde{Q}^r$ have the same Hilbert polynomial, both being $r$ times the Hilbert polynomial of ${\iota_{m,1}}_\ast G$.

\noindent\textit{The Objects $Q^{m,r}$.}  For any $0\leq r<m$, we can define the objects $Q^{m,r}$ by the exact triangle in $D^b(X_R)$
\begin{equation}\label{definition-Qmr}
  I^m \to I^r \to Q^{m,r} \to I^m[1]
\end{equation}
where the morphism $I^m \to I^r$ comes from composition of the elementary modifications.  (So $Q^{m,0}$ is simply $Q^m$.)

The octahedral axiom gives us diagrams such as
\begin{equation*}
\xymatrix{
 & & & \iota_\ast G  \ar[ddd]\\
& & & \\
& I^{m-1} \ar[rruu] \ar[dr] & & \\
I^m \ar[ur] \ar[rr] & & I^{m-2} \ar[r] \ar[dr] & Q^{m,m-2} \ar[d] \\
& & & \iota_\ast G
}
\end{equation*}
from which we see $Q^{m,m-2} = {\iota_2}_\ast \tilde{Q}^{m,m-2}$ for some $\tilde{Q}^{m,m-2} \in D^b(X_2)$ (Lemma \ref{XmXnXm+n}).

If we fix $m$ and iterate this process, we would obtain $Q^{m,r} = {\iota_{m-r}}_\ast \tilde{Q}^{m,r}$ for some $\tilde{Q}^{m,r} \in D^b(X_{m-r})$.  Since the heart of a t-structure is extension-closed, each $\tilde{Q}^{m,r}$ lies in the heart $\Ap_{m-r}$.  It is easy to see that $\tilde{Q}^{m-r}$ and $\tilde{Q}^{m,r}$ have the same Hilbert polynomial.

\begin{lemma}\label{Q2mflat-lemma1}
Let $m \geq 2$.  If $\tilde{Q}^m$ is flat over $\Spec R/\pi^m$ then $\HH^0 L\iota_{2m,m}^\ast \tilde{Q}^{2m} \cong \tilde{Q}^m$.
\end{lemma}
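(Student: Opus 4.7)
The plan is to compare $\tilde{Q}^{2m}$ with $\tilde{Q}^m$ via the short exact sequence produced by iterating Lemma \ref{Qmflatness-lemma2},
\[
0 \to R^{2m,m} \to \tilde{Q}^{2m} \to {\iota_{2m,m}}_\ast \tilde{Q}^m \to 0 \quad \text{in } \Ap_{2m}.
\]
The first step is to identify the kernel as a pushforward from $X_m$. Applying the octahedral axiom to the composition $I^{2m} \to I^m \to I^0$ in $D^b(X_R)$ produces an exact triangle $Q^{2m,m} \to Q^{2m} \to Q^m$. Since $Q^{2m,m} \cong {\iota_m}_\ast \tilde{Q}^{2m,m}$ and $Q^m \cong {\iota_m}_\ast \tilde{Q}^m$ by the definition of $\tilde Q^{m,r}$, Lemma \ref{XmXnXm+n} together with full faithfulness of ${\iota_{2m}}_\ast$ lifts this to an exact triangle in $D^b(X_{2m})$. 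All three terms of the lifted triangle lie in $\Ap_{2m}$ (because $\tilde Q^{2m} \in \Ap_{2m}$, and the pushforward ${\iota_{2m,m}}_\ast$ of $\tilde Q^{2m,m}, \tilde Q^m \in \Ap_m$ is again in $\Ap_{2m}$), so the triangle is in fact a short exact sequence in $\Ap_{2m}$, and comparison with Lemma \ref{Qmflatness-lemma2} yields $R^{2m,m} \cong {\iota_{2m,m}}_\ast \tilde{Q}^{2m,m}$.

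Next, I pull the resulting sequence back via $L\iota_{2m,m}^\ast$ and take the long exact sequence of cohomology with respect to $\Ap_m$:
\[
\HH^{-1} L\iota_{2m,m}^\ast {\iota_{2m,m}}_\ast \tilde{Q}^m \xrightarrow{\partial} \HH^0 L\iota_{2m,m}^\ast {\iota_{2m,m}}_\ast \tilde{Q}^{2m,m} \to \HH^0 L\iota_{2m,m}^\ast \tilde{Q}^{2m} \to \HH^0 L\iota_{2m,m}^\ast {\iota_{2m,m}}_\ast \tilde{Q}^m \to 0.
\]
Invoking the analog of \cite[Corollary 5.8]{Lo1} for the closed immersion $\iota_{2m,m} : X_m \hookrightarrow X_{2m}$ (the same proof that gave Lemma \ref{objectheartLiicohom} goes through for this nilpotent thickening), the flatness of $\tilde Q^m$ over $\Spec R/\pi^m$ forces
\[
\HH^0 L\iota_{2m,m}^\ast {\iota_{2m,m}}_\ast \tilde{Q}^m \;\cong\; \HH^{-1} L\iota_{2m,m}^\ast {\iota_{2m,m}}_\ast \tilde{Q}^m \;\cong\; \tilde{Q}^m.
\]
This already gives the surjection $\HH^0 L\iota_{2m,m}^\ast \tilde{Q}^{2m} \twoheadrightarrow \tilde{Q}^m$.

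The remaining and main step is to show this surjection is injective, equivalently that the connecting map $\partial$ is surjective so that the image of the second map vanishes. The hard part is controlling $\partial$: the object $\tilde Q^{2m,m}$ need not be handled by the flatness hypothesis, so one cannot directly identify $\HH^0 L\iota_{2m,m}^\ast {\iota_{2m,m}}_\ast \tilde{Q}^{2m,m}$. I expect to handle this by tracing $\partial$ through the octahedral construction that produced $R^{2m,m}$ and reducing to an argument of the same shape used in Lemma \ref{Qmflatness-lemma2} (where the isomorphism $\ep'$ in diagram \eqref{octahedral5} forced the map $\ep$ to vanish, killing the $\HH^0$-image of a pushed-forward kernel); alternatively one can try to match Hilbert polynomials, since $R^{2m,m}$ and ${\iota_{2m,m}}_\ast \tilde Q^m$ share the same Hilbert polynomial as Artinian extensions of ${\iota_{2m,1}}_\ast G$. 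Either way, surjectivity of $\partial$ forces the middle map $\HH^0 L\iota_{2m,m}^\ast {\iota_{2m,m}}_\ast \tilde{Q}^{2m,m} \to \HH^0 L\iota_{2m,m}^\ast \tilde{Q}^{2m}$ to be zero, and the claimed isomorphism $\HH^0 L\iota_{2m,m}^\ast \tilde{Q}^{2m} \cong \tilde{Q}^m$ follows.
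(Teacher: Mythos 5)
There is a genuine gap at what you yourself call ``the remaining and main step.'' Your long exact sequence correctly produces a surjection $\HH^0 L\iota_{2m,m}^\ast \tilde{Q}^{2m} \twoheadrightarrow \tilde{Q}^m$, but the injectivity of that map --- equivalently, the vanishing of the image of $\HH^0 L\iota_{2m,m}^\ast R^{2m,m} \to \HH^0 L\iota_{2m,m}^\ast \tilde{Q}^{2m}$ --- is the entire content of the lemma, and neither of your two suggestions closes it. Tracing the connecting map $\partial$ through the octahedral construction is not an argument until carried out, and the Hilbert-polynomial route fails because the Hilbert polynomial of $\HH^0 L\iota_{2m,m}^\ast \tilde{Q}^{2m}$ cannot be computed without already controlling the negative cohomologies $\HH^{i} L\iota_{2m,m}^\ast \tilde{Q}^{2m}$ for $i<0$, which is precisely what Proposition \ref{QmflatQ2mflat} is trying to establish downstream of this lemma. (Indeed, in the paper the vanishing of the analogous map $\phi_0$ is \emph{deduced from} this lemma, not used to prove it.) You have also not verified that your claimed identification $R^{2m,m} \cong {\iota_{2m,m}}_\ast \tilde{Q}^{2m,m}$ is compatible with the surjection of Lemma \ref{Qmflatness-lemma2}, though that is a minor point by comparison.

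The paper closes the gap by a different and more economical route: it applies $\HH^0 L\iota_m^\ast$ directly to the triangle $Q^{2m,m} \to Q^{2m} \to Q^m$ over $X_R$, obtaining a short exact sequence $0 \to K \to \HH^0 L\iota_m^\ast Q^{2m} \to \tilde{Q}^m \to 0$ in $\Ap_m$ with no need to identify $K$. It then pulls this back to the central fibre: by Lemma \ref{allLiotaQmisom} both $\HH^0 L\iota^\ast Q^{2m}$ and $\HH^0 L\iota^\ast Q^m$ are isomorphic to $G$, so the induced surjection between them is an isomorphism; the flatness hypothesis gives $\HH^{-1} L\iota_{m,1}^\ast \tilde{Q}^m = 0$, whence $\HH^0 L\iota_{m,1}^\ast K = 0$; and the Nakayama-type statement \cite[Lemma 5.5]{Lo1} then forces $K=0$. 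You could in principle repair your own version by the same device --- pull your surjection back via $L\iota_{m,1}^\ast$, use $\HH^0 L\iota_{m,1}^\ast \HH^0 L\iota_{2m,m}^\ast \cong \HH^0 L\iota_{2m,1}^\ast$ and Lemma \ref{allLiotaQmisom} to see both sides become $G$, and invoke \cite[Lemma 5.5]{Lo1} on the kernel --- but as written the proposal does not prove the lemma.
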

\begin{proof}
Applying the octahedral axiom to the commutative triangle
\[
\xymatrix{
& I^m\ar[dr] & \\
I^{2m} \ar[ur] \ar[rr] & & I^0
}
\]
gives us an exact triangle
\[
  Q^{2m,m} \to Q^{2m} \overset{\al}{\to} Q^m \to Q^{2m,m}[1].
\]
Applying $\HH^0 L\iota_m^\ast$, we get a short exact sequence in $\Ap_m$
\begin{equation*}
\xymatrix{
0 \ar[r] &  K \ar[r] & \HH^0 L\iota_m^\ast Q^{2m} \ar[rr]^{\HH^0 L\iota_m^\ast \al}& & \HH^0 L\iota_m^\ast Q^m \cong \tilde{Q}^m \ar[r] & 0
}
\end{equation*}
where $K$ is the kernel of $\HH^0 L\iota_m^\ast \al$.  Using the flatness of $\tilde{Q}^m$, we can pull back this sequence to an exact triangle via $L\iota_{m,1}^\ast$ and take cohomology with respect to $\Ap$.  Since $\HH^0 L\iota_{m,1}^\ast (\HH^0 L\iota_m^\ast Q^{2m})$ and $\HH^0 L\iota_{m,1}^\ast (\HH^0 L\iota_m^\ast Q^m)$ are both isomorphic to $G$, the morphism $\HH^0 L\iota_{m,1}^\ast (\HH^0 L\iota_m^\ast \al)$ is an isomorphism.  And because $\HH^{-1} L\iota_{m,1}^\ast \tilde{Q}^m = 0$, we have $\HH^0 L\iota_{m,1}^\ast K=0$.  Hence $K=0$ by \cite[Lemma 5.5]{Lo1}.  As a result,
\[
\tilde{Q}^m \cong \HH^0 L\iota_m^\ast Q^{2m} \cong \HH^0 L\iota_{2m,m}^\ast L\iota_{2m}^\ast {\iota_{2m}}_\ast \tilde{Q}^{2m} \cong \HH^0 L\iota_{2m,m}^\ast \tilde{Q}^{2m}
\]
 as wanted.
\end{proof}

\begin{pro}\label{QmflatQ2mflat}
For $m \geq 1$, if $\tilde{Q}^m$ is flat, then $\tilde{Q}^{2m}$ is also flat.
\end{pro}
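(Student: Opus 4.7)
The plan is to mimic the structure of the proof of Lemma \ref{Qmflatness-lemma2}, but with the roles of the pair $(m,m-1)$ replaced by the pair $(2m,m)$, and then combine the resulting short exact sequence with Lemma \ref{Q2mflat-lemma1}. Specifically, I would begin by applying the octahedral axiom to the commutative triangle
\[
\xymatrix{
 & I^m \ar[dr] & \\
I^{2m} \ar[ur] \ar[rr] & & I^0
}
\]
of compositions of elementary modifications. This produces an exact triangle $Q^{2m,m} \to Q^{2m} \to Q^m \to Q^{2m,m}[1]$ in $D^b(X_R)$. Rewriting this on $X_{2m}$ via the identifications $Q^{2m}=\iota_{2m\ast}\tilde Q^{2m}$, $Q^m = \iota_{2m\ast}{\iota_{2m,m}}_\ast \tilde Q^m$ and $Q^{2m,m} = \iota_{2m\ast}{\iota_{2m,m}}_\ast \tilde Q^{2m,m}$ (all three being pushforwards from $X_m\subset X_{2m}$) gives an exact triangle in $D^b(X_{2m})$
\[
{\iota_{2m,m}}_\ast \tilde Q^{2m,m} \to \tilde Q^{2m} \to {\iota_{2m,m}}_\ast \tilde Q^m \to {\iota_{2m,m}}_\ast \tilde Q^{2m,m}[1]
\]
whose three objects all lie in $\Ap_{2m}$.

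The heart of the argument is to upgrade this exact triangle to a short exact sequence in $\Ap_{2m}$, by showing that the connecting morphism vanishes. This is the analog of the vanishing of $\beta$ in the proof of Lemma \ref{Qmflatness-lemma2}. I would carry this out by applying $\HH^\bullet_{\Ap}L\iota_{2m,1}^\ast$ to the shifted connecting map, tracing through the cohomology computation exactly as in the diagram \eqref{octahedral5}. Because the inclusion $\iota_{2m,m}:X_m\hookrightarrow X_{2m}$ is still a square-zero thickening (the ideal $\pi^m\OO_{X_{2m}}$ squares to zero since $\pi^{2m}=0$), the functor $L\iota_{2m,m}^\ast \circ {\iota_{2m,m}}_\ast$ decomposes as in \cite[Corollary 5.8]{Lo1}, and the same method that proved the vanishing of $\HH^0 L\iota_{m,1}^\ast \beta$ in Lemma \ref{Qmflatness-lemma2} should give the vanishing of the analogous cohomology here. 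Then Lemma \ref{Qmflatness-lemma1}, applied with $m$ replaced by $2m$ and $\iota_{m-1,1}$ replaced by $\iota_{2m,m}\iota_{m,1}$, promotes that cohomological vanishing to vanishing of the connecting morphism itself. This yields a short exact sequence in $\Ap_{2m}$
\[
0 \to {\iota_{2m,m}}_\ast \tilde Q^{2m,m} \to \tilde Q^{2m} \to {\iota_{2m,m}}_\ast \tilde Q^m \to 0.
\]

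Granted this short exact sequence, I would apply $L\iota_{2m,m}^\ast$ and take the long exact sequence of $\Ap_m$-cohomology. Using Lemma \ref{Q2mflat-lemma1} to identify $\HH^0_{\Ap_m}L\iota_{2m,m}^\ast \tilde Q^{2m}$ with $\tilde Q^m$, the flatness of $\tilde Q^m$, and the standard description of $L\iota_{2m,m}^\ast {\iota_{2m,m}}_\ast(-)$ from \cite[Corollary 5.8]{Lo1}, I would deduce that $\HH^{-1}_{\Ap_m}L\iota_{2m,m}^\ast \tilde Q^{2m}=0$, so that $L\iota_{2m,m}^\ast \tilde Q^{2m} \cong \tilde Q^m$ in $\Ap_m$. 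Then
\[
L\iota_{2m,1}^\ast \tilde Q^{2m} \;=\; L\iota_{m,1}^\ast L\iota_{2m,m}^\ast \tilde Q^{2m} \;\cong\; L\iota_{m,1}^\ast \tilde Q^m \;\in\; \Ap,
\]
where the last step uses the hypothesis that $\tilde Q^m$ is flat. This is the required flatness of $\tilde Q^{2m}$.

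The main obstacle I expect is the vanishing of the connecting morphism in $\Ap_{2m}$, since everything downstream is essentially formal once the short exact sequence is in hand. As in Lemma \ref{Qmflatness-lemma2}, this vanishing is subtle: one has to identify the right spots in a seven-term cohomology sequence where a boundary map forces the desired map to be zero, and then pass from cohomology to the actual morphism via the adjunction argument of Lemma \ref{Qmflatness-lemma1}. Comparing Hilbert polynomials of the terms against the already-known filtration structure (via the objects $R^{2m,m}$ of \eqref{GQRsequence}) may also be needed to rule out unwanted cohomology in intermediate steps.
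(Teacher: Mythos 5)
Your overall strategy---produce a short exact sequence $0 \to (\text{pushforward from } X_m) \to \tilde{Q}^{2m} \to {\iota_{2m,m}}_\ast \tilde{Q}^m \to 0$ in $\Ap_{2m}$, restrict to $X_m$, and invoke Lemma \ref{Q2mflat-lemma1}---matches the skeleton of the paper's argument. One remark on the first half: the short exact sequence does not require a fresh connecting-morphism vanishing argument in the style of Lemma \ref{Qmflatness-lemma2}. It comes for free by composing the surjections of that lemma for $2m, 2m-1, \dots, m+1$; the kernel of the composite is the object $R^{2m,m}$ introduced before \eqref{GQRsequence}, which is already known to be a pushforward ${\iota_{2m,m}}_\ast\tilde{R}^{2m,m}$ with the same Hilbert polynomial as ${\iota_{2m,m}}_\ast\tilde{Q}^m$. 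So that portion of your plan is sound but redundant, and identifying the kernel with ${\iota_{2m,m}}_\ast\tilde{Q}^{2m,m}$ specifically is not needed.

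The genuine gap is in the second half. You assert that $L\iota_{2m,m}^\ast\circ{\iota_{2m,m}}_\ast$ ``decomposes as in \cite[Corollary 5.8]{Lo1}'' because $X_m\hookrightarrow X_{2m}$ is a square-zero thickening. That corollary concerns the inclusion $X_m\hookrightarrow X_R$, which is cut out by the \emph{non-zero-divisor} $\pi^m$, so that $\OO_{X_m}$ has a two-term Koszul resolution and $L\iota_m^\ast{\iota_m}_\ast F\cong F\oplus F[1]$. The inclusion $X_m\hookrightarrow X_{2m}$ is not of this type: $\pi^m$ is a zero-divisor on $X_{2m}$, the resolution of $\OO_{X_m}$ over $\OO_{X_{2m}}$ is the infinite $1$-periodic complex $[\cdots\overset{\pi^m}{\to}\OO_{X_{2m}}\overset{\pi^m}{\to}\OO_{X_{2m}}]$, and $\HH^i L\iota_{2m,m}^\ast{\iota_{2m,m}}_\ast F\cong F$ for \emph{every} $i\leq 0$. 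Consequently your long exact sequence is infinite, and Lemma \ref{Q2mflat-lemma1} alone does not kill $\HH^{-1}L\iota_{2m,m}^\ast\tilde{Q}^{2m}$: it shows the outgoing map $\psi_1$ to $\tilde{Q}^m$ is zero (via $\phi_0=0$ and a Hilbert-polynomial count making $\delta_1$ an isomorphism), but the incoming map $\phi_1$ from $\tilde{R}^{2m,m}$ could still be nonzero, and ruling that out sends you one step further down the sequence, and so on ad infinitum. You also need $\HH^{-i}L\iota_{2m,m}^\ast\tilde{Q}^{2m}=0$ for \emph{all} $i\geq 1$, not just $i=1$, before you may write $L\iota_{2m,1}^\ast\tilde{Q}^{2m}\cong L\iota_{m,1}^\ast\tilde{Q}^m$. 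The paper breaks this regress by realising the long exact sequence as that of a $1$-periodic double complex (justified by a Postnikov-system computation of the column cohomologies), so that the maps $\psi_i$, respectively $\phi_i$, literally coincide for all $i\geq 1$; then $\psi_0$ iso $\Rightarrow\phi_0=0\Rightarrow\delta_1$ iso $\Rightarrow\psi_1=0\Rightarrow$ all $\psi_i=0\Rightarrow\delta_2$ iso $\Rightarrow$ all $\phi_i=0$, whence every $\HH^{-i}$ vanishes. Some version of this periodicity argument is the missing engine of your proof.
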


\begin{proof}
We need to demonstrate that  the cohomology of $L\iota_{2m,1}^\ast \tilde{Q}^{2m}$ only exists at degree 0 with respect to the heart $\Ap$.

Consider the double complex in $\Ap_{2m}$
\begin{equation}\label{Q2mflat-doublecomplex}
\xymatrix{
0 \ar[r] & R^{2m,m} \ar[r]^{\tilde{\phi}} & \tilde{Q}^{2m} \ar[r]^{\tilde{\psi}} & {\iota_{2m,m}}_\ast \tilde{Q}^{m} \ar[r] & 0 \\
0 \ar[r] & R^{2m,m} \ar[r]^{\tilde{\phi}} \ar[u]^{\pi^m} & \tilde{Q}^{2m} \ar[r]^{\tilde{\psi}}  \ar[u]^{-\pi^m} & {\iota_{2m,m}}_\ast \tilde{Q}^{m} \ar[r] \ar[u]^{\pi^m} & 0 \\
0 \ar[r] & R^{2m,m} \ar[r]^{\tilde{\phi}} \ar[u]^{\pi^m} & \tilde{Q}^{2m} \ar[r]^{\tilde{\psi}}  \ar[u]^{-\pi^m} & {\iota_{2m,m}}_\ast \tilde{Q}^{m} \ar[r] \ar[u]^{\pi^m} & 0  \\
 & \vdots \ar[u]^{\pi^m} & \vdots \ar[u]^{-\pi^m} & \vdots \ar[u]^{\pi^m} &
}
\end{equation}
where the rows are  copies of the short exact sequence $0 \to R^{2m,m} \to \tilde{Q}^{2m} \to {\iota_{2m,m}}_\ast \tilde{Q}^m \to $ that comes from the definition of $R^{2m,m}$, and the vertical maps are all multiplication by $\pm \pi^m$.  The diagram \eqref{Q2mflat-doublecomplex}  has two  properties:
\begin{itemize}
\item[(a)] Each column is a chain complex in the abelian category $\Ap_{2m}$ (i.e.\ the composition of two successive differentials is zero).  In particular, the differentials in the first and third columns are all zero maps.
\item[(b)] Consider each column of \eqref{Q2mflat-doublecomplex} as a complex in $D(\Ap_{2m})$ sitting at degrees $\leq 0$.  Then the degree-$i$ cohomology of the first, second the third column can be computed by applying $\HH^i_{\Ap_{2m}} ({\iota_{2m,m}}_\ast L\iota_{2m,m}^\ast (-))$ to $R^{2m,m}, \tilde{Q}^{2m}$ and ${\iota_{2m,m}}_\ast \tilde{Q}^m$, respectively.
\end{itemize}

From the construction of $R^{2m,m}$, we know that $R^{2m,m}$ is the pushforward of an object in $\Ap_m$, i.e.\ $R^{2m,m} \cong {\iota_{2m,m}}_\ast \tilde{R}^{2m,m}$ for some $\tilde{R}^{2m,m}$ (using Lemma \ref{XmXnXm+n}).  It is then clear that property (a) holds.  To see property (b), we can use Postnikov systems (see \cite[Section 1.3]{Orlov}, for example).  Consider the following Postnikov system in a triangulated category $\mathcal{D}$
\begin{equation*}
\def\objectstyle{\scriptstyle}
\def\labelstyle{\scriptstyle}
\xymatrix@-.5pc@R=2pc{
X^{-3}  \ar@{}[drr]^{\circ} \ar[rr]^{d^{-3}} \ar[dr]_{u_{-3}=\mbox{id}} &&   X^{-2} \ar@{}[drr]^{\circ} \ar[rr]^{d^{-2}} \ar[dr]_{u_{-2}} &  & X^{-1}  \ar@{}[drr]^\circ \ar[dr]_{u_{-1}} \ar[rr]^{d^{-1}}  &  & X^{0} \ar[dr]_{u_{0}} & \\
& Y^{-3} = X^{-3} \ar@{}[urr]^\ast  \ar[ur]_{v_{-3}}  && Y^{-2}\ar@{}[urr]^\ast \ar[ur]_{v_{-2}}  \ar[ll]^{[1]} & & Y^{-1} \ar@{}[urr]^\ast \ar[ur]_{v_{-1}} \ar[ll]^{[1]} & & Y^{0} \ar[ll]^{[1]}
}
\end{equation*}
where triangles marked with $\circ$ are commutative, the triangles marked with $\ast$ are  exact triangles, and $X^\bullet = [X^{-3} \overset{d^{-3}}{\to}  X^{-2} \overset{d^{-2}}{\to} X^{-1} \overset{d^{-1}}{\to}  X^0]$ is a chain complex in the category $\mathcal{D}$ (i.e.\ $d^{i}d^{i+1}=0$ for any $i$).  Given such a Postnikov system in $\mathcal{D}$, if we also have a t-structure on $\mathcal{D}$ with heart $\mathcal{A}$, and $X^\bullet$ is a complex with all the terms in $\mathcal{A}$, then
\begin{equation}\label{cohom-postnikov}
H^i(X^\bullet) \cong \HH^i_{\mathcal{A}} (Y^0)
\end{equation}
where $H^i(X^\bullet)$ is the degree-$i$ cohomology of $X^\bullet$ considered as a chain complex in $\mathcal{A}$, and $\HH^i_{\mathcal{A}} (Y^0)$ is the degree-$i$ cohomology of $Y^0$ computed in $\mathcal{D}$ with respect to the t-structure with heart $\mathcal{A}$.  (To see this,  apply cohomology functors $\HH^i_{\mathcal{A}}$ to the Postnikov system.)

For our situation, we build a Postnikov system in $D(X_{2m})$ of the above form  as follows: let $F^\bullet = [F^{-1} \overset{d}{\to} F^0] $ be any 2-term complex of coherent sheaves representing an object of $\Ap_{2m}$.  That is,  $F^{-1}, F^0$ are coherent sheaves on $X_{2m}$, and $d$ is a morphism of coherent sheaves.  At some point, we will specify $F^\bullet$ to be $R^{2m,m}, \tilde{Q}^{2m}$ or ${\iota_{2m,m}}_\ast \tilde{Q}^m$; however, we will not do so just yet.  We can consider $F^\bullet$  as an object in the derived category $D(\Ap_{2m})$ via the canonical inclusion $\Ap_{2m} \to D(\Ap_{2m})$.  Note that $D(\Ap_{2m})=D(\Coh(X_{2m}))$ by \cite[Proposition 5.4.3]{BV}, although we will not explicitly use this fact  in this proof.

For $-3\leq i \leq 0$, let $X^i = F^\bullet$, and for $-3 \leq i \leq -1$, let  $d^i$ be multiplication by $\pi^m$.  Then $X^\bullet =[X^{-3} \overset{d^{-3}}{\to} X^{-2} \overset{d^{-2}}{\to} X^{-1} \overset{d^{-1}}{\to}  X^0]$ is a chain complex in the abelian category $\Ap_{2m}$.  Let $v_{-3}$ be multiplication by $\pi^m$.  For $-2 \leq i \leq 0$, let $Y^{i}$ be the cone of $v_{i-1}$, and let $u_{i}$ be the natural inclusion of $X^{i}$ into $Y^{i}$.  Also, for $i=-2,-1$, let $v_{i}$ be the projection of $Y^{i}$ onto $X^{i}=X^{i+1}$ followed by multiplication by $\pi^m$.  Note that, while the natural projection of $Y^{i}$ onto $X^{i}$ itself is \textit{not} a chain map in $\text{Kom}(X_{2m})$, it becomes a chain map when we post-compose it with multiplication by $\pi^m$.  This construction gives us a Postnikov system with
\begin{equation*}
  Y^0 = \cone (v_{-1}) = \begin{matrix}
\xymatrix@-1pc@R=1.5pc{ {\save
[].[dddr]*[F.]\frm{}\restore}
  F^{-1} \ar[r]^{d} & F^0 \\
  F^{-1} \ar[u]^{\pi^m} \ar[r]^{-d} & F^0 \ar[u]^{\pi^m} \\
  F^{-1} \ar[u]^{-\pi^m} \ar[r]^d & F^0 \ar[u]^{-\pi^m} \\
  F^{-1} \ar[u]^{\pi^m} \ar[r]^{-d} & F^0 \ar[u]^{\pi^m}
  }
  \end{matrix}
\end{equation*}
where the dotted box means to `take the total complex of the double complex inside,' a notation we adopt until the end of the proof of this proposition.  Also, here we use the sign convention used in \cite[Definition 2.15]{FMTAG} in the definition for the mapping cone.

Now, for the infinite complex $[ \cdots F^\bullet \overset{\pi^m}{\to} F^\bullet \overset{\pi^m}{\to} F^\bullet ]$ in $D(\Ap_{2m})$, the cohomology at all degrees $i \leq -1$ are isomorphic by periodicity.  On the other hand, for any $i \leq -1$ we have
\begin{align*}
  H^i  ( [ \cdots \overset{\pi^m}{\to} F^\bullet \overset{\pi^m}{\to} F^\bullet \overset{\pi^m}{\to} F^\bullet ])
  & \cong H^{-1} ([ F^\bullet \overset{\pi^m}{\to} F^\bullet \overset{\pi^m}{\to} F^\bullet \overset{\pi^m}{\to} F^\bullet ]) \\
  &= \HH^{-1}_{\Ap_{2m}} (Y^0) \text{ by our Postnikov system above and \eqref{cohom-postnikov}} \\
  &= \HH^{-1}_{\Ap_{2m}} \begin{pmatrix}
 \def\g#1{\save
[].[dddr]*[F.]\frm{}\restore}%
\xymatrix@-1pc@R=1.5pc{ \g1
  F^{-1} \ar[r]^{d} & F^0 \\
  F^{-1} \ar[u]^{\pi^m} \ar[r]^{-d} & F^0 \ar[u]^{\pi^m} \\
  F^{-1} \ar[u]^{-\pi^m} \ar[r]^d & F^0 \ar[u]^{-\pi^m} \\
  F^{-1} \ar[u]^{\pi^m} \ar[r]^{-d} & F^0 \ar[u]^{\pi^m}
  }
  \end{pmatrix} \\
  &\cong \HH^{-1}_{\Ap_{2m}} \begin{pmatrix} \def\g#1{\save
[].[dddr]*[F.]\frm{}\restore}%
\xymatrix@-1pc@R=1.5pc{ \g1
  F^{-1} \ar[r]^{d} & F^0 \\
  F^{-1} \ar[u]^{-\pi^m} \ar[r]^d & F^0 \ar[u]^{\pi^m} \\
  F^{-1} \ar[u]^{-\pi^m} \ar[r]^d & F^0 \ar[u]^{\pi^m} \\
  F^{-1} \ar[u]^{-\pi^m} \ar[r]^{d} & F^0 \ar[u]^{\pi^m}
  }
  \end{pmatrix}
\end{align*}
since we have a quasi-isomorphism between these two complexes, given by the chain map
\begin{equation*}
 \def\g#1{\save
[].[dddr]*[F.]\frm{}\restore}%
\xymatrix@-1pc@R=1.5pc{ \g1
  F^{-1} \ar[r]^{d} & F^0 &&& \g2 F^{-1} \ar[r]^{d} & F^0 \\
  F^{-1} \ar[u]^{\pi^m} \ar[r]^{-d} & F^0 \ar[u]^{\pi^m} &&&  F^{-1} \ar[u]^{-\pi^m} \ar[r]^d & F^0 \ar[u]^{\pi^m} \\
  F^{-1} \ar[u]^{-\pi^m} \ar[r]^d & F^0 \ar[u]^{-\pi^m} &&& F^{-1} \ar[u]^{-\pi^m} \ar[r]^d & F^0 \ar[u]^{\pi^m} \\
  F^{-1} \ar[u]^{\pi^m} \ar[r]^{-d} & F^0 \ar[u]^{\pi^m} &&&  F^{-1} \ar[u]^{-\pi^m} \ar[r]^{d} & F^0 \ar[u]^{\pi^m}
  \ar@/^1.5pc/ @{.>}^1 "1,1" ; "1,5"
  \ar@/^1.5pc/ @{.>}_(.2)1 "1,2" ; "1,6"
  \ar@/^1.5pc/ @{.>}^{-1} "2,1" ; "2,5"
  \ar@/^1.5pc/ @{.>}_(.2)1 "2,2" ; "2,6"
  \ar@/^1.5pc/ @{.>}^{-1} "3,1" ; "3,5"
  \ar@/^1.5pc/ @{.>}_(.2){-1} "3,2" ; "3,6"
  \ar@/^1.5pc/ @{.>}^1 "4,1" ; "4,5"
  \ar@/^1.5pc/ @{.>}_(.2){-1} "4,2" ; "4,6"
  }
  \end{equation*}
Then,
\begin{equation*}
\HH^{-1}_{\Ap_{2m}} \begin{pmatrix} \def\g#1{\save
[].[dddr]*[F.]\frm{}\restore}%
\xymatrix@-1pc@R=1.5pc{ \g1
  F^{-1} \ar[r]^{d} & F^0 \\
  F^{-1} \ar[u]^{-\pi^m} \ar[r]^d & F^0 \ar[u]^{\pi^m} \\
  F^{-1} \ar[u]^{-\pi^m} \ar[r]^d & F^0 \ar[u]^{\pi^m} \\
  F^{-1} \ar[u]^{-\pi^m} \ar[r]^{d} & F^0 \ar[u]^{\pi^m}
  }
  \end{pmatrix} \cong \HH^{-1}_{\Ap_{2m}} \begin{pmatrix}
  \def\g#1{\save
[].[ddddr]*[F.]\frm{}\restore}%
\xymatrix@-1pc@R=1.5pc{ \g1
  F^{-1} \ar[r]^{d} & F^0 \\
  F^{-1} \ar[u]^{-\pi^m} \ar[r]^{d} & F^0 \ar[u]^{\pi^m} \\
  F^{-1} \ar[u]^{-\pi^m} \ar[r]^d & F^0 \ar[u]^{\pi^m} \\
  F^{-1} \ar[u]^{-\pi^m} \ar[r]^{d} & F^0 \ar[u]^{\pi^m} \\
  \vdots \ar[u] & \vdots \ar[u]
  }
  \end{pmatrix}
  \cong  \HH^i_{\Ap_{2m}} ({\iota_{2m,m}}_\ast L\iota_{2m,m}^\ast F^\bullet)
\end{equation*}
where the first isomorphism follows because we only need the top four rows in the middle dotted box to compute the cohomology $\HH^{-1}(\Ap_{2m})$. And so, overall, we obtain
\[
 H^i ( [ \cdots \overset{\pi^m}{\to} F^\bullet \overset{\pi^m}{\to} F^\bullet \overset{\pi^m}{\to} F^\bullet ]) \cong \HH^i_{\Ap_{2m}} ({\iota_{2m,m}}_\ast L\iota_{2m,m}^\ast F^\bullet).
\]
Taking $F^\bullet$ to be $R^{2m,m}, \tilde{Q}^{2m}$ or ${\iota_{2m,m}}_\ast \tilde{Q}^m$, we obtain  property (b).

Now, by properties (a) and (b), the long exact sequence of the double complex \eqref{Q2mflat-doublecomplex} in $\Ap_{2m}$ looks like
\begin{equation*}
 \xymatrix{
 {\iota_{2m,m}}_\ast \tilde{R}^{2m,m} \ar[r]^(.4){\phi_0} & {\iota_{2m,m}}_\ast \HH^0 L\iota_{2m,m}^\ast \tilde{Q}^{2m} \ar[r]^(.6){\psi_0} & {\iota_{2m,m}}_\ast \tilde{Q}^m \ar[r] & 0 \\
 {\iota_{2m,m}}_\ast \tilde{R}^{2m,m} \ar[r]^(.4){\phi_1} & {\iota_{2m,m}}_\ast \HH^{-1} L\iota_{2m,m}^\ast \tilde{Q}^{2m} \ar[r]^(.6){\psi_1} & {\iota_{2m,m}}_\ast \tilde{Q}^m \ar `u[ll] `[ull]_{\delta_1} [ull]    & \\
 {\iota_{2m,m}}_\ast \tilde{R}^{2m,m} \ar[r]^(.4){\phi_2} & {\iota_{2m,m}}_\ast \HH^{-2} L\iota_{2m,m}^\ast \tilde{Q}^{2m} \ar[r]^(.6){\psi_2} & {\iota_{2m,m}}_\ast \tilde{Q}^{m} \ar `u[ll] `[ull]_{\delta_2} [ull]    &\\
& \vdots & \ar `u[ll] `[ull] [ull]    &
}.
\end{equation*}

From Lemma \ref{Q2mflat-lemma1}, we know $\psi_0$ is an isomorphism.  So $\phi_0$ is the zero map, and $\delta_1$ is an isomorphism (since ${\iota_{2m,m}}_\ast \tilde{R}^{2m,m}$ and ${\iota_{2m,m}}_\ast \tilde{Q}^m$ have the same Hilbert polynomial).  Consequently, $\psi_1$ is the zero map.  However, the double complex \eqref{Q2mflat-doublecomplex} is 1-periodic in the rows, and so all the $\psi_i$ are zero maps for $i \geq 1$.  That $\psi_2$ is zero means $\delta_2$ is an isomorphism (by comparing Hilbert polynomials), hence $\phi_1$ is the zero map.  By the same periodicity argument, we get that all $\phi_i$ are zero maps for $i \geq 1$.  As a result, we obtain $\HH^i L\iota_{2m,m}^\ast \tilde{Q}^{2m}=0$ for all $i\leq -1$, thus $L\iota_{2m,m}^\ast \tilde{Q}^{2m} \cong \tilde{Q}^m$.  Since $\tilde{Q}^m$ is already flat, we see that $\tilde{Q}^{2m}$ is also flat.
\end{proof}

\begin{lemma}\label{flatness-2mimplies2m-1}
For $m\geq 3$, if $\tilde{Q}^{m}$ is flat, then $\tilde{Q}^{m-1}$ is also flat.
\end{lemma}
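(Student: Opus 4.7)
My plan is to deduce flatness of $\tilde{Q}^{m-1}$ from that of $\tilde{Q}^m$ by establishing an isomorphism $L\iota_{m,m-1}^\ast \tilde{Q}^m \cong \tilde{Q}^{m-1}$ in $\Ap_{m-1}$. Once this is in hand, flatness follows immediately from
\[ L\iota_{m-1,1}^\ast \tilde{Q}^{m-1} \cong L\iota_{m-1,1}^\ast L\iota_{m,m-1}^\ast \tilde{Q}^m \cong L\iota_{m,1}^\ast \tilde{Q}^m, \]
which lies in $\Ap$ by the assumed flatness of $\tilde{Q}^m$.

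To begin, I would show that $L\iota_{m,m-1}^\ast \tilde{Q}^m \in \Ap_{m-1}$ by a t-structural argument. Right t-exactness of $L\iota_{m,m-1}^\ast$ and $\tilde{Q}^m \in \Ap_m$ already place the pullback in $D^{\leq 0}_{\Ap_{m-1}}$; any nontrivial $\HH^{-j}_{\Ap_{m-1}}$ with $j>0$ would propagate via $L\iota_{m,1}^\ast = L\iota_{m-1,1}^\ast \circ L\iota_{m,m-1}^\ast$ and \cite[Lemma 5.5]{Lo1} to produce nontrivial negative $\Ap$-cohomology of $L\iota_{m,1}^\ast \tilde{Q}^m$, contradicting flatness. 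Next, I would apply $L\iota_{m,m-1}^\ast$ to the Lemma \ref{Qmflatness-lemma2} sequence at level $m$ and take cohomology in $\Ap_{m-1}$. Using $\iota_{m,1} = \iota_{m,m-1} \circ \iota_{m-1,1}$, Lemma \ref{objectheartLiicohom}, and \cite[Corollary 5.4, Lemma 5.6]{Lo1}, one computes $\HH^0 L\iota_{m,m-1}^\ast ({\iota_{m,m-1}}_\ast \tilde{Q}^{m-1}) \cong \tilde{Q}^{m-1}$ and $\HH^0 L\iota_{m,m-1}^\ast ({\iota_{m,1}}_\ast G) \cong {\iota_{m-1,1}}_\ast G$. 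The long exact sequence then collapses to a short exact sequence of the form $0 \to {\iota_{m-1,1}}_\ast G \to L\iota_{m,m-1}^\ast \tilde{Q}^m \to \tilde{Q}^{m-1} \to 0$ in $\Ap_{m-1}$, which must be compared with the companion sequence of Lemma \ref{Qmflatness-lemma2} applied at level $m-1$. It is exactly at this point that the hypothesis $m \geq 3$ enters, as Lemma \ref{Qmflatness-lemma2} requires $m-1 \geq 2$.

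The main obstacle I foresee is the bookkeeping needed to match the connecting morphisms and conclude that the extension $L\iota_{m,m-1}^\ast \tilde{Q}^m$ really is $\tilde{Q}^{m-1}$ rather than a different extension of ${\iota_{m-1,m-2}}_\ast \tilde{Q}^{m-2}$ by ${\iota_{m-1,1}}_\ast G$. A cleaner alternative, which I would pursue if the direct identification becomes unwieldy, is to mimic the proof of Proposition \ref{QmflatQ2mflat}: construct a periodic double complex whose rows are the Lemma \ref{Qmflatness-lemma2} sequence at level $m$ and whose vertical differentials encode a Koszul-type resolution of $R/\pi^{m-1}$ over $R/\pi^m$; then read off the vanishing of $\HH^{<0}_\Ap L\iota_{m-1,1}^\ast \tilde{Q}^{m-1}$ from the degeneration forced by $L\iota_{m,1}^\ast \tilde{Q}^m \in \Ap$, via a Postnikov-system cohomology computation analogous to the one carried out in the proof of Proposition \ref{QmflatQ2mflat}.
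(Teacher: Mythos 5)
Your overall strategy coincides with the paper's: prove $L\iota_{m,m-1}^\ast \tilde{Q}^m \cong \tilde{Q}^{m-1}$ and deduce flatness of $\tilde{Q}^{m-1}$ from that of $\tilde{Q}^m$, starting from the sequence of Lemma \ref{Qmflatness-lemma2} at level $m$ and the fact (from \cite[Proposition 5.10]{Lo1}) that flatness of $\tilde{Q}^m$ places $L\iota_{m,m-1}^\ast \tilde{Q}^m$ in $\Ap_{m-1}$. However, there is a genuine gap at the decisive step, and what you assert in its place is false. The long exact sequence of $\Ap_{m-1}$-cohomology obtained by applying $L\iota_{m,m-1}^\ast$ to $0 \to {\iota_{m,1}}_\ast G \to \tilde{Q}^m \to {\iota_{m,m-1}}_\ast \tilde{Q}^{m-1} \to 0$ ends with
\[
\HH^{-1} L\iota_{m,m-1}^\ast {\iota_{m,m-1}}_\ast \tilde{Q}^{m-1} \overset{\delta}{\to} {\iota_{m-1,1}}_\ast G \to L\iota_{m,m-1}^\ast \tilde{Q}^m \to \tilde{Q}^{m-1} \to 0,
\]
and the entire content of the lemma is to show that $\delta$ is an isomorphism, so that the map out of ${\iota_{m-1,1}}_\ast G$ is zero and $L\iota_{m,m-1}^\ast \tilde{Q}^m \cong \tilde{Q}^{m-1}$ on the nose. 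You instead claim the sequence ``collapses'' to $0 \to {\iota_{m-1,1}}_\ast G \to L\iota_{m,m-1}^\ast \tilde{Q}^m \to \tilde{Q}^{m-1} \to 0$, i.e.\ that $\delta = 0$ -- the exact opposite. That short exact sequence is incompatible with the isomorphism you are trying to establish: it would force $L\iota_{m,m-1}^\ast \tilde{Q}^m$ to have Hilbert polynomial $m$ times that of ${\iota_{m-1,1}}_\ast G$, whereas $\tilde{Q}^{m-1}$ has $(m-1)$ times that of $G$ (compare the filtration by the $R^{m,m-r}$). Consequently the ``extension comparison'' you flag as the main obstacle is not a bookkeeping issue but a symptom of the wrong exact sequence; no choice of extension class can reconcile the Hilbert polynomials.

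The missing ingredient is the computation of $\HH^{-1} L\iota_{m,m-1}^\ast {\iota_{m,m-1}}_\ast \tilde{Q}^{m-1}$: the paper uses the periodicity statement \cite[Lemma 5.9]{Lo1} (the odd-degree cohomologies $\HH^i L\iota_{m,m-1}^\ast {\iota_{m,m-1}}_\ast \tilde{Q}^{m-1}$, $i<0$, are all isomorphic) together with the vanishing of $\HH^{i} L\iota_{m,m-1}^\ast \tilde{Q}^m$ for $i<0$ further along the same long exact sequence to identify $\HH^{-1} \cong \HH^{-3} \cong {\iota_{m-1,1}}_\ast G$, which forces $\delta$ to be an isomorphism. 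Your fallback suggestion (a periodic double complex and Postnikov systems as in Proposition \ref{QmflatQ2mflat}) is a plausible alternative in spirit, but as written it is only a sketch and does not substitute for this identification. As it stands, the proposal does not prove the lemma.
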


\begin{proof}

By Lemma \ref{Qmflatness-lemma2}, we have the following short exact sequence in $\Ap_m$:
\begin{equation*}
 0 \to {\iota_{m,1}}_\ast G \to \tilde{Q}^m \to {\iota_{m,m-1}}_\ast \tilde{Q}^{m-1} \to 0.
\end{equation*}

The flatness of $\tilde{Q}^m$ implies $L\iota^\ast \tilde{Q}^m \in \Ap (X_k)$, which in turn implies $L\iota_{m,m-1}^\ast \tilde{Q}^m \in \Ap_{m-1}$ by \cite[Proposition 5.10]{Lo1}.  Applying $L\iota_{m,m-1}^\ast$ and taking the long exact sequence of cohomology with respect to $\Ap_{m-1}$, we get
\[
\xymatrix{
 {\iota_{m-1,1}}_\ast G \ar[r] & L\iota_{m,m-1}^\ast \tilde{Q}^{m} \ar[r] & \tilde{Q}^{m-1} \ar[r] & 0 \\
{\iota_{m-1,1}}_\ast G \ar[r] & 0 \ar[r] & \HH^{-1} (L\iota_{m,m-1}^\ast {\iota_{m,m-1}}_\ast \tilde{Q}^{m-1}) \ar `u[ll] `[ull]_\delta [ull] & \\
\cdots \ar[r] & 0 \ar[r] & \HH^{-2} (L\iota_{m,m-1}^\ast {\iota_{m,m-1}}_\ast \tilde{Q}^{m-1})\ar `u[ll] `[ull] [ull]  &
}.
\]
By \cite[Lemma 5.9]{Lo1}, for all odd integers $i<0$, the  objects $\HH^i L\iota_{m,m-1}^\ast {\iota_{m,m-1}}_\ast \tilde{Q}^{m-1}$ are isomorphic.  In particular, the cohomology $\HH^{-1} L\iota_{m,m-1}^\ast {\iota_{m,m-1}}_\ast \tilde{Q}^{m-1}$ is isomorphic to the cohomology $\HH^{-3} L\iota_{m,m-1}^\ast {\iota_{m,m-1}}_\ast \tilde{Q}^{m-1}$, which is isomorphic to ${\iota_{m-1,1}}_\ast G$.  This reveals that $\delta$ is an isomorphism, and so $L\iota_{m,m-1}^\ast \tilde{Q}^m \cong \tilde{Q}^{m-1}$.  However, that $\tilde{Q}^m$ is flat means that $L\iota_{m,m-1}^\ast \tilde{Q}^m$ is also flat.  Therefore, $\tilde{Q}^{m-1}$ is also flat.
\end{proof}

This concludes the proof of

\begin{pro}
$\tilde{Q}^m$ is flat over $\Spec R/\pi^m$ for each $m\geq 1$.
\end{pro}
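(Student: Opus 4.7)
The entire engine for this final statement has already been assembled: Proposition \ref{QmflatQ2mflat} provides the doubling step and Lemma \ref{flatness-2mimplies2m-1} provides the descent step. The plan, therefore, is pure induction with bookkeeping, and the argument is short.

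First, I would observe that the base case is free: $\tilde{Q}^1 \cong G$ by Lemma \ref{allLiotaQmisom} (and the definition $Q^1 = \iota_\ast G$), and since $\Spec R/\pi = \Spec k$ is a point, flatness over it is automatic. Next, I would iterate Proposition \ref{QmflatQ2mflat} starting from $\tilde{Q}^1$ to conclude that $\tilde{Q}^{2^n}$ is flat over $\Spec R/\pi^{2^n}$ for every $n\geq 0$.

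Given an arbitrary $m \geq 1$, I would then choose $n$ large enough so that $2^n \geq m$. If $m=1$ we are done by the base case; if $m=2$ we apply Proposition \ref{QmflatQ2mflat} once to $\tilde{Q}^1$. If $m\geq 3$, I would apply Lemma \ref{flatness-2mimplies2m-1} repeatedly, descending from the already established flat object $\tilde{Q}^{2^n}$ through the chain
\[
\tilde{Q}^{2^n}, \; \tilde{Q}^{2^n-1}, \; \tilde{Q}^{2^n-2}, \; \ldots, \; \tilde{Q}^m.
\]
Each step is legal because the hypothesis $m\geq 3$ of Lemma \ref{flatness-2mimplies2m-1} is satisfied throughout this range (the last invocation has $m'= m+1 \geq 4 > 3$). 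This yields flatness of $\tilde{Q}^m$ for every $m \geq 1$.

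There is no real obstacle to overcome at this step: the only thing to be careful about is the $m\geq 3$ restriction on the descent lemma, which is why I separate out the two small cases $m=1,2$ and handle them directly from the base case and one doubling. Everything else is a finite iteration of the two results already proved in the subsection.
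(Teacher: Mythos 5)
Your proposal is correct and is essentially the paper's own argument: the author explicitly reduces the statement to the base case $\tilde{Q}^1 \cong G$, the doubling step (Proposition \ref{QmflatQ2mflat}) and the descent step (Lemma \ref{flatness-2mimplies2m-1}), and the proposition is stated as the conclusion of exactly that combination. Your extra bookkeeping about reaching an arbitrary $m$ via some $2^n \geq m$ and the $m \geq 3$ restriction on the descent lemma just makes explicit what the paper leaves to the reader.
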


\subsubsection{Lifting the Inverse System to a Family over $\Spec \hat{R}$}

Now that we have shown flatness for each $\tilde{Q}^m$, we know that the surjections $L\iota_m^\ast I^0 \overset{\al_m}{\longrightarrow} \HH^0 L\iota_m^\ast Q^m$ form an inverse system with respect to the functors $\HH^0_{\Ap_m} \circ L\iota_{m,m'}^\ast : \Ap_m \to \Ap_{m'}$, as described in section \ref{sssec-invsysconst}.

Now, fix an isomorphism $c_m : L\iota_{m+1,m}^\ast \tilde{Q}^{m+1} \cong \tilde{Q}^{m}$ for each $m\geq 1$.  Then we can use these isomorphisms and their compositions with the various $L\iota_{m,m'}^\ast c_m$ to construct compatible isomorphisms that, together with the $\{\tilde{Q}^m\}_m$, give  an inverse system that satisfy the hypotheses of:

\begin{pro}\cite[Prop 3.6.1]{Lieblich}
Let $X \to S$ be a flat morphism of finite presentation of quasi-separated algebraic spaces.  Let $(A,\mm,k)$ be a complete local Noetherian $S$-ring and $E_i \in D^b(X_{A/\mm^{i+1}})$ a system of elements with compatible isomorphisms $E_i \Lo_{A/\mm^{i+1}} A/\mm^{i} \overset{\thicksim}{\to} E_{i-1}$ in the derived category.  Then there is $E \in D^b(X_A)$ and compatible isomorphisms $E \Lo_A A/\mm^{i+1} \overset{\thicksim}{\to} E_i$.
\end{pro}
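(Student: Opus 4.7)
The plan is a two-stage strategy: first, promote the inverse system $\{E_i\}$ in the derived categories $D^b(X_{A/\mm^{i+1}})$ to a compatible inverse system of genuine chain complexes of flat sheaves; second, pass to an inverse limit that algebraizes on $\Spec A$ via a formal existence theorem. Completeness of $A$ is the feature that makes the second stage possible, since the scheme $X_A$ is built from its adic thickenings $X_{A/\mm^{i+1}}$.

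For the first stage, I would inductively choose bounded complexes $F_i^\bullet$ of flat $\OO_{X_{A/\mm^{i+1}}}$-modules quasi-isomorphic to $E_i$, together with chain-level lifts of the given derived isomorphisms, so that $F_i^\bullet \otimes_{A/\mm^{i+1}} A/\mm^i \simeq F_{i-1}^\bullet$ as actual complexes and not merely in $D^b$. Termwise flat lifts are always available over nilpotent thickenings, so the point is to lift the differentials in a way that preserves $d^2=0$ and still represents the prescribed $E_i$. The obstructions to such lifts live in groups of the form $\Ext^2_{X_{A/\mm}}(E_0, E_0 \otimes_k \mm^{i}/\mm^{i+1})$, all of which vanish by hypothesis since a lift $E_i$ is given to exist; the compatible isomorphism with $E_{i-1}$ pins down the lift up to the expected ambiguity. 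For the second stage, the tower $\{F_i^\bullet\}$ gives, in each degree, a coherent sheaf on the formal algebraic space obtained by $\mm$-adic completion of $X_A$; Grothendieck's formal existence theorem (in the algebraic-spaces generality used elsewhere in Lieblich's paper) algebraizes this to a bounded complex $F^\bullet$ on $X_A$, and boundedness of the cohomology of $F^\bullet$ is inherited from that of the $E_i$. The resulting class in $D^b(X_A)$ is the required $E$.

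The main obstacle is precisely the first stage. A tower of isomorphisms in the derived category is not the same as a tower of chain maps: derived morphisms are only determined up to homotopy, and when one composes lifts along an inverse system the strict equalities required for an actual tower of complexes need not hold, which would prevent the degreewise inverse limit from even being well-defined. The standard remedy, which I expect Lieblich to use, is to work inside a dg-enhancement of $D^b$, or equivalently to exploit a ``universally gluable''-type condition (vanishing of negative self-Ext of the $E_i$) whose role is exactly to upgrade a homotopy-coherent diagram of lifts to a strictly commutative one. Once this rigidification is in place, the remaining ingredients — flat termwise lifting and formal algebraization — are standard and can be threaded through to finish the argument.
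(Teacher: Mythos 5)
First, a caveat on the comparison itself: the paper does not prove this proposition --- it is quoted verbatim from \cite[Prop.\ 3.6.1]{Lieblich} and used as a black box --- so there is no internal proof to measure you against, only Lieblich's original argument.

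Against that benchmark, your sketch has two genuine gaps. The first is the one you half-acknowledge at the end. Your proposed repair of the strictification problem --- invoking vanishing of negative self-Exts (``universal gluability'') or a dg-enhancement --- uses a hypothesis that the proposition does not contain: the $E_i$ are arbitrary objects of $D^b(X_{A/\mm^{i+1}})$, with no condition on $\Ext^{<0}(E_i,E_i)$. Moreover, your obstruction computation in $\Ext^2_{X_{A/\mm}}(E_0, E_0\otimes_k \mm^i/\mm^{i+1})$ addresses the wrong question: lifting an object one infinitesimal step is not at issue (the lifts $E_i$ are given); the issue is making the tower of derived isomorphisms strictly commutative, and that is exactly the part your argument leaves unproved. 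The second gap is in stage~2: a \emph{bounded} complex with terms flat over $A/\mm^{i+1}$ representing $E_i$ exists only when $E_i$ has finite Tor-amplitude over $A/\mm^{i+1}$, and this fails for general coherent complexes over the non-regular Artinian rings $A/\mm^{i+1}$ (e.g.\ the pushforward of $\OO_{X_k}$ to $X_{A/\mm^2}$ admits no such representative); if you instead allow unbounded-above or non-coherent flat terms, the termwise application of Grothendieck existence breaks down, since formal GAGA is a statement about coherent sheaves.

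Lieblich's proof avoids both problems by a different route: induct on the number of nonzero standard cohomology sheaves. The top cohomology sheaves $H^{\mathrm{top}}(E_i)$ form an honest compatible adic system of coherent sheaves (no homotopy-coherence issue arises, since $H^{\mathrm{top}}$ commutes with $-\Lo_{A/\mm^{i+1}}A/\mm^{i}$), and this system algebraizes by classical formal existence; the truncations $\tau^{<\mathrm{top}}E_i$ algebraize by induction; and the connecting morphism of the triangle algebraizes because the relevant $\Ext^1$ over $X_A$ is a finite $A$-module whose $\mm$-adic completion computes the inverse limit of the $\Ext^1$'s on the thickenings --- the same comparison as the Lemma~4.1.1(2) of \cite{Lieblich} quoted just below in this paper. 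The cone of the algebraized morphism is the desired $E$. If you want to salvage your architecture, you would need either to restrict to the situation actually arising in this paper (the $\tilde{Q}^m$ lie in hearts and are flat families, so both the Ext-vanishing and the finite Tor-amplitude you need do hold there), or to invoke genuinely $\infty$-categorical descent results, which is far more machinery than the statement requires.
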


As a result, we have

\begin{pro}
If $R$ is a complete DVR over $k$, then there exists an object $Q_R
 \in D^b(X_R)$ and compatible isomorphisms $L\iota_m^\ast Q_R \cong \tilde{Q}^m$.
\end{pro}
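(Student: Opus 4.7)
The plan is to apply Lieblich's Proposition~3.6.1 stated immediately above to the inverse system $\{\tilde{Q}^m\}_{m\geq 1}$ equipped with the compatibility isomorphisms $c_m$ that the preceding paragraph has just set up. Since that paragraph has already massaged the $c_m$ (via compositions with their pullbacks $L\iota_{m,m'}^\ast c_m$) into a strict inverse system, almost nothing is left beyond checking the geometric hypotheses of Lieblich's proposition.

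First I would verify those hypotheses. Because $X$ is smooth projective over $k$, the base change $X_R \to \Spec R$ is flat, of finite presentation, and quasi-separated (in fact separated). Because $R$ is a complete DVR with residue field $k$ and uniformiser $\pi$, the triple $(R,(\pi),k)$ is a complete local Noetherian ring over $k$. Reindexing $E_i := \tilde{Q}^{i+1}$ for $i \geq 0$, so that $E_i \in D^b(X_{R/\pi^{i+1}})$, the chosen isomorphism $c_{i+1} : L\iota_{i+2,i+1}^\ast \tilde{Q}^{i+2} \overset{\thicksim}{\to} \tilde{Q}^{i+1}$ becomes precisely the required isomorphism $E_{i+1}\Lo_{R/\pi^{i+2}} R/\pi^{i+1}\overset{\thicksim}{\to} E_i$, because $L\iota_{m+1,m}^\ast (-) = (-)\Lo_{R/\pi^{m+1}} R/\pi^m$.

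Second, I would simply invoke Lieblich's proposition to extract an object $Q_R \in D^b(X_R)$ together with compatible isomorphisms $Q_R \Lo_R R/\pi^m \overset{\thicksim}{\to} \tilde{Q}^m$ for every $m\geq 1$; unwinding notation, these are the desired isomorphisms $L\iota_m^\ast Q_R \cong \tilde{Q}^m$.

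I do not anticipate any genuine obstacle in this last step --- the real content has been the preceding pages: the inductive construction of the $\tilde{Q}^m$ as pushforwards from $X_m$ (via Lemma~\ref{XmXnXm+n}), the verification that each lies in $\Ap_m$, and in particular the two-step argument (flatness at level $m$ forces flatness at levels $2m$ and $m-1$) that produced both the flatness of every $\tilde{Q}^m$ and the isomorphisms $L\iota_{m+1,m}^\ast \tilde{Q}^{m+1} \cong \tilde{Q}^m$ from which the $c_m$ are chosen. Completeness of $R$ enters only through Lieblich's proposition: without it one would at best produce a pro-object rather than an honest element of $D^b(X_R)$.
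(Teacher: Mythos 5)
Your proposal is correct and is exactly the paper's intended argument: the proposition is stated in the paper as an immediate consequence of Lieblich's Proposition 3.6.1 applied to the inverse system $\{\tilde{Q}^m\}$ with the compatibility isomorphisms built from the $c_m$, which is precisely what you do. Your identification of $L\iota_{m+1,m}^\ast(-)$ with $(-)\Lo_{R/\pi^{m+1}}R/\pi^{m}$ and your remark that completeness of $R$ enters only through Lieblich's result are both accurate.
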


By concatenating the commutative triangles $\xymatrix{ I^0 \ar[r] \ar[dr] & Q^m \ar[d] \\ & Q^{m-1}}$ extracted from diagram \eqref{octahedral1} for $m \geq 1$, and fixing an isomorphism $Q^m \overset{\thicksim}{\to} {\iota_m}_\ast \iota_m^\ast Q_R$ for each $m$, we get a commutative diagram
\begin{equation}
\xymatrix{
  I^0 \ar[r] \ar[dr] \ar[ddr] & {\iota_m}_\ast \iota_m^\ast Q_R \ar[d]\\
   & {\iota_{m-1}}_\ast \iota_{m-1}^\ast Q_R \ar[d] \\
   & {\iota_{m-2}}_\ast \iota_{m-2}^\ast Q_R \ar[d] \\
    & \vdots
}.
\end{equation}
Now, by the following lemma, we have an isomorphism
\begin{equation*}
  \Hom_{X_R} (I^0, Q_R) = \varprojlim \Hom_{X_R} (I^0, R/\pi^{i+1} \Lo Q_R).
\end{equation*}
Note that ${\iota_m}_\ast \iota_m^\ast Q_R \cong R/\pi^m \Lo Q_R$.

\begin{lemma}\cite[Lemma 4.1.1(2)]{Lieblich}
Let $X \to \Spec A_0$ be a proper flat algebraic space of finite presentation over a reduced Noetherian ring and $I$ a finite $A_0$-module.  Given $E,F \in D^b_p (X/A_0)$ and any $i$, if $\mm \subset A_0$ is a maximal ideal, then
\[
  \Ext^i_X (E, F \Lo I) \otimes_{A_0} \hat{A}_0 = \varprojlim \Ext^i_X (E,I/\mm^{i+1} I \Lo F).
\]
\end{lemma}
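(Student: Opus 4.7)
The plan is to reduce the claim to the standard completion identity $M \otimes_{A_0} \hat{A}_0 \cong \varprojlim_n M/\mm^{n+1}M$, valid for any finitely generated $A_0$-module $M$ at the maximal ideal $\mm$, and then to match the two sides of this identity with the two sides of the assertion.

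First I would establish that $N := \Ext^i_X(E, F \Lo I)$ is a finitely generated $A_0$-module. This is where the hypotheses enter in an essential way: the properness and finite presentation of $X \to \Spec A_0$, together with the pseudocoherence (perfect amplitude) packaged into $D^b_p(X/A_0)$, allow us to express $\Ext^i_X(E, F \Lo I)$ via the local-to-global Ext spectral sequence as an iterated higher direct image of a pseudocoherent complex on a proper family, and hence finite generation follows from the standard coherence theorem for $Rf_\ast$ in the proper-pseudocoherent setting. Applying commutative algebra, $N \otimes_{A_0} \hat{A}_0 \cong \varprojlim_n N/\mm^{n+1}N$, which identifies the left-hand side of the claim.

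Next, I would compare $N/\mm^{n+1} N$ with $\Ext^i_X(E, (I/\mm^{n+1}I) \Lo F)$. The short exact sequence
\[
0 \to \mm^{n+1} I \to I \to I/\mm^{n+1} I \to 0
\]
of finite $A_0$-modules yields, after applying $F\Lo(-)$, an exact triangle in $D^b(X)$, and in turn a long exact sequence of $\Ext^\bullet_X(E, -)$. The natural map $N/\mm^{n+1} N \to \Ext^i_X(E, (I/\mm^{n+1}I) \Lo F)$ factors through this long exact sequence, with obstructions measured by $\Ext^{i-1}_X(E, (I/\mm^{n+1}I) \Lo F)$ and by $\Ext^{\ast}_X(E, F \Lo \mm^{n+1}I)$.

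The main obstacle, and the step that requires the most care, is verifying that the relevant inverse systems are Mittag-Leffler so that passing to $\varprojlim_n$ annihilates the error terms. Here I would invoke Artin--Rees: since each Ext group $\Ext^j_X(E, F \Lo I)$ is finitely generated by the first step, the two inverse systems on it induced by $\{\mm^{n+1}\}_n$ and by $\{F\Lo \mm^{n+1}I\}_n$ are cofinal up to a uniform shift in $n$, which gives Mittag-Leffler and kills the potential $\varprojlim{}^{1}$ terms. This identifies $\varprojlim_n N/\mm^{n+1}N$ with $\varprojlim_n \Ext^i_X(E, (I/\mm^{n+1}I) \Lo F)$ and completes the proof.
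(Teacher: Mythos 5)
Note first that the paper does not actually prove this lemma: it is quoted verbatim from Lieblich \cite[Lemma 4.1.1(2)]{Lieblich}, and the only remark added is that the proof given there for $E=F$ works for a general pair $E,F$. So your attempt has to stand on its own. Your first two steps are sound: finite generation of $N:=\Ext^i_X(E,F\Lo I)$ over $A_0$ does follow from properness, finite presentation and relative perfection, and $N\otimes_{A_0}\hat{A}_0\cong\varprojlim_n N/\mm^{n+1}N$ is standard commutative algebra. The gap is in your final step. The error terms you must kill in the limit are, on one side, the cokernels $C_n$ of $N/\mm^{n+1}N\to\Ext^i_X(E,(I/\mm^{n+1}I)\Lo F)$, which sit inside $\Ext^{i+1}_X(E,F\Lo\mm^{n+1}I)$, and on the other side the discrepancy between $\mm^{n+1}N$ and $K_n:=\image\bigl(\Ext^i_X(E,F\Lo\mm^{n+1}I)\to N\bigr)$ (only the inclusion $\mm^{n+1}N\subseteq K_n$ is formal). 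Neither is controlled by the ordinary Artin--Rees lemma applied to the finitely generated modules $\Ext^j_X(E,F\Lo I)$: the maps $\Ext^j_X(E,F\Lo\mm^{n+1}I)\to\Ext^j_X(E,F\Lo I)$ need not be injective, so $\{C_n\}_n$ is not a system of submodules of any fixed finite module, and Artin--Rees as you invoke it says nothing about it. What is actually required is the graded finiteness statement underlying the theorem on formal functions --- that $\bigoplus_n\Ext^j_X(E,F\Lo\mm^nI)$ is a finite module over the Rees algebra $\bigoplus_n\mm^n$ --- and that needs its own proof; it is not a formal consequence of each individual $\Ext^j_X(E,F\Lo I)$ being finitely generated.

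The cleaner route, and in substance the one behind the cited lemma, is to use properness, flatness and relative perfection once at the outset to replace $R\Hom_X(E,F\Lo(-))$ by $P\Lo_{A_0}(-)$ for a single perfect complex $P$ of $A_0$-modules, represented by a bounded complex of finite projectives. Then $\Ext^i_X(E,F\Lo I)\otimes_{A_0}\hat{A}_0=H^i(P\otimes I)\otimes_{A_0}\hat{A}_0=H^i(P\otimes I\otimes_{A_0}\hat{A}_0)$ by flatness of completion, while the terms of $P\otimes(I/\mm^{n+1}I)$ are artinian, so every relevant inverse system is automatically Mittag--Leffler and $H^i$ commutes with $\varprojlim_n$. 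This reduction eliminates precisely the Artin--Rees issue that your outline leaves open; as written, your proof is incomplete at that point.
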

Even though the lemma is stated only for the case $E=F$ in \cite{Lieblich}, the proof works in general.

Therefore, the commutative diagram above lifts to an element  $\varphi \in \Hom_{X_R} (I^0,Q_R)$.  
And $\varphi$ restricts to the maps $\al_m : L\iota^\ast_m I^0 \twoheadrightarrow \tilde{Q}^m$ (see the start of section \ref{sssec-invsysconst}).

\subsection{Universal Closedness over an Arbitrary DVR}

The goal of this section is to show the  valuative criterion of universal closedness for PT-semistable objects.

So far, we have learned that if $R$ is a complete DVR, and $E_K \in \Ap_K$ is PT-semistable with $ch_0(E_K) \neq 0$, then we can produce an $R$-flat object $I^0 \in D^b(X_R)$ that restricts to $E_K$ on $X_K$.  We have also learned, that if the central fibre $L\iota^\ast I^i$ does not become PT-semistable after a finite sequence of elementary modifications, we can  produce a morphism $\varphi : I^0 \to Q_R$ where $Q_R$ is $R$-flat, and $L\iota^\ast (\varphi)$ is a destabilising quotient $L\iota^\ast I^0 \to L\iota^\ast Q_R \cong G$ in $\Ap$.  In this case, we can define $A \in D^b(X_R)$ by the exact triangle $A \to I^0 \overset{\varphi}{\to} Q_R \to A[1]$.  Pulling back this exact triangle via $j^\ast$ gives
\begin{equation}\label{jstarI0QR}
 j^\ast A \to j^\ast I^0 \overset{j^\ast\varphi}{\longrightarrow} j^\ast Q_R \to j^\ast A[1].
 \end{equation}
On the other hand, we can also pull back the same triangle using $L\iota^\ast$ to obtain
$$ L\iota^\ast A \to L\iota^\ast I^0 \overset{L\iota^\ast\varphi}{\longrightarrow} L\iota^\ast Q_R \to L\iota^\ast A[1].$$
Since $L\iota^\ast \varphi$ is a surjection in $\Ap$, $L\iota^\ast A$ is necessarily in $\Ap$.  So by openness of the heart $\Ap$ \cite[Lemma 3.14]{TodaLSOp}, $j^\ast A$ and $j^\ast Q_R$ both lie in the heart $\Ap_K$.  Since Chern character is locally constant, $L\iota^\ast I^0$ and $j^\ast I^0$ would have the same Chern character, as are $L\iota^\ast Q_R$ and $j^\ast Q_R$.  This means that  \eqref{jstarI0QR} yields a short exact sequence in $\Ap_K$ that destabilises $j^\ast I^0 \cong E_K$, which is a contradiction.  We have just proved:

\begin{theorem}[Valuative criterion for universal closedness]\label{theorem-prelimcompletenessofPTss}
Let $(R,\pi)$ be a DVR, and $K$ its field of fractions.  Given a PT-semistable object $E_K \in \Ap (X_K)$ such that $ch_0(E_K)\neq 0$, there exists $E \in D^b (X_R)$, a flat family of objects in $\Ap$ over $\Spec R$ such that $j^\ast E \cong E_K$ and $L\iota^\ast E$  is PT-semistable.
\end{theorem}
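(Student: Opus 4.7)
The plan is to reduce the statement to the complete DVR case, where all of the machinery of this section applies directly. First, invoke Proposition~\ref{pro-Lo1theorem41improv} to extend $E_K$ to an $R$-flat family $I^0 \in D^b(X_R)$ of objects in $\Ap$ such that $j^\ast I^0 \cong E_K$, $H^0(L\iota^\ast I^0)$ is $0$-dimensional, and $H^{-1}(L\iota^\ast I^0)$ is semistable in $\Coh_{3,1}(X_k)$. Then iteratively apply the elementary modification procedure of Proposition~\ref{em}, producing a sequence $\cdots \to I^2 \to I^1 \to I^0$ in $D^b(X_R)$. If for some $n$ the central fibre $L\iota^\ast I^n$ is PT-semistable, set $E := I^n$ and we are done.

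Otherwise, base-change to the completion $\hat{R}$ of $R$, setting $\hat{I}^n := I^n \otimes_R \hat{R}$. Since $R \to \hat{R}$ induces an isomorphism on residue fields, each $\hat{I}^n$ is $\hat{R}$-flat with the \emph{same} central fibre as $I^n$, and the system $\{\hat{I}^n\}$ realizes an infinite elementary-modification process over $\hat{R}$ that never reaches a PT-semistable central fibre. Applying to $\hat{I}^0$ the machinery built up in this section (Proposition~\ref{infiniteemBGconstant}, flatness of the $\tilde{Q}^m$, and the Lieblich gluing lemma) yields a morphism $\varphi : \hat{I}^0 \to Q_{\hat{R}}$ in $D^b(X_{\hat{R}})$ with $Q_{\hat{R}}$ flat over $\hat{R}$ and $L\iota^\ast \varphi$ a strictly destabilizing surjection in $\Ap$. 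Let $A$ be the fibre of $\varphi$; surjectivity of $L\iota^\ast \varphi$ puts $L\iota^\ast A$ in $\Ap$, so by openness of the heart \cite[Lemma~3.14]{TodaLSOp} the triangle pulls back under $j^\ast$ to a short exact sequence
\[
 0 \to j^\ast A \to E_K \otimes_K \hat{K} \to j^\ast Q_{\hat{R}} \to 0
\]
in $\Ap_{\hat{K}}$. Because Chern characters are locally constant on flat families, the phase of $j^\ast Q_{\hat{R}}$ matches that of $L\iota^\ast Q_{\hat{R}} \cong G$, so the sequence strictly destabilizes $E_K \otimes_K \hat{K}$ as an object of $\Ap_{\hat{K}}$.

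To finish, observe that the heart $\Ap$ and the central charge $Z$ are defined via Chern characters valued in $A^\ast(X)_\RR$, which are invariant under the faithfully flat field extension $K \to \hat{K}$; in particular, PT-semistability is preserved under base change from $K$ to $\hat{K}$. Thus the PT-semistability of $E_K$ forces $E_K \otimes_K \hat{K}$ to be PT-semistable, contradicting the destabilization just produced. This contradiction shows that the elementary-modification process over $R$ must terminate at some finite stage $n$ with PT-semistable central fibre, giving the required family $E := I^n$. The main obstacle is the construction of $\varphi$ over $\hat{R}$, which is exactly the content of the preceding subsections; the reduction from arbitrary DVR to complete DVR is essentially formal, provided one grants that PT-semistability commutes with base change of the fraction field.
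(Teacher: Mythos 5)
Your reduction to the complete case hinges on the assertion that PT-semistability of $E_K$ implies PT-semistability of $E_K \otimes_K \hat{K}$, and this is precisely where the argument has a genuine gap. The justification you give --- that the heart and the central charge are defined via Chern characters, which are invariant under the field extension $K \to \hat{K}$ --- addresses the wrong issue: the danger is not that the phase function changes, but that $E_K \otimes_K \hat{K}$ may acquire destabilising subobjects in $\Ap(X_{\hat{K}})$ that are not defined over $K$. For sheaves this is handled by Galois descent applied to the \emph{unique} maximal destabilising subsheaf (Langton's argument \cite{Langton}), but for polynomial stability in the derived category the HN filtration is unique only up to isomorphism, which is not enough to run descent; the paper makes exactly this point in the remark following Proposition \ref{pro-PTssbasechange}. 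The paper is only able to prove base-change invariance of PT-semistability (Proposition \ref{pro-PTssbasechange}) under the extra hypothesis that $ch_0(E_K)$ and $ch_1(E_K)$ are coprime, because in that case Proposition \ref{pro-PTsscharacterisation} recharacterises semistability by conditions (torsion-freeness and $\mu$-semistability of $H^{-1}$, $0$-dimensionality of $H^0$, vanishing of $\Hom(\OO_x,E)$) that are visibly preserved under field extension via the dualising functor. Without that hypothesis, your claim is unproven, so the contradiction you derive over $\hat{K}$ does not contradict anything you know.

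The paper's actual route for the general statement avoids this entirely: it uses the complete-DVR case (which, combined with openness of PT-semistability, suffices to construct $\mathscr{D}_{\Ap}^{ch,PTss}(X)$ as an Artin stack of finite type in Section \ref{section-modulidef}), and then invokes \cite[Remark 4, (7.4)]{LMB} and \cite[Theorem (7.10)]{LMB}, which say that the valuative criterion for universal closedness of such a stack may be verified using complete DVRs only. Everything else in your proposal --- the extension via Proposition \ref{pro-Lo1theorem41improv}, the elementary modifications, the observation that base change to $\hat{R}$ preserves the central fibre and hence transports the infinite modification process, and the construction of the destabilising quotient $\varphi : \hat{I}^0 \to Q_{\hat{R}}$ --- matches the paper. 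But the step you describe as ``essentially formal'' is exactly the step the paper cannot do formally; to repair your argument you must either add the coprimality hypothesis and cite Proposition \ref{pro-PTssbasechange}, or replace the base-change step by the stack-theoretic reduction.
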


\begin{proof}
Our proof above is for the case where $R$ is a complete DVR.  This is sufficient for us to define the moduli of PT-semistable objects and conclude that it is an Artin stack of finite type over $k$ (see Proposition \ref{pro-openness} and Section \ref{section-modulidef}).  Then, by \cite[Remark 4, (7.4)]{LMB} and \cite[Theorem (7.10)]{LMB}, the result for an arbitrary DVR follows.
\end{proof}


If we did not wish to use the existence of our moduli space as an Artin stack in showing the valuative criterion for an arbitrary DVR, we could prove the above theorem under an additional hypothesis, as follows:

Consider the commutative diagram where $R$ is an arbitrary DVR and $\hat{R}$ is its completion, and $K$ and $\hat{K}$ are their respective fields of fractions:
\begin{equation*}
\xymatrix{
  X_k \arinj[r]^\iota \ar[d] & X_{\hat{R}} \ar[d]^p & X_{\hat{K}} \ar@{_{(}->}[l]_j \ar[d]^p \\
  X_k \arinj[r]^\iota & X_R & X_K \ar@{_{(}->}[l]_j
}.
\end{equation*}
Suppose we are given a PT-semistable object $E_K$ in the heart $\Ap (X_K)$.  The additional hypothesis we need to add is, that $ch_0 (E_K) \neq 0$ and $ch_0(E_K), ch_1(E_K)$ are coprime.  By \cite[Theorem 4.5]{Lo1}, we can extend $E_K$ to a flat family $E \in D^b(X_R)$ of objects in $\Ap$ over $\Spec R$, so that $j^\ast E \cong E_K$ and $L\iota^\ast E \in \Ap (X_k)$.  Suppose $L\iota^\ast E$ is not PT-semistable.  Then $L\iota^\ast E$ has a maximal destabilising subobject $E_0 \hookrightarrow L\iota^\ast E$ in $\Ap (X_k)$.

On the other hand, $p^\ast E$ is also a flat family over $\hat{R}$, and $L\iota^\ast (p^\ast E) \cong L\iota^\ast E$.  Moreover, $j^\ast (p^\ast E) \in \Ap_{\hat{K}}$ by Lemma \ref{PTsspullbackinheart} below.  Since $j^\ast (p^\ast E) \cong p^\ast (j^\ast E)$, $j^\ast (p^\ast E)$ is also PT-semistable by Proposition \ref{pro-PTssbasechange} below.  Moreover, $L\iota^\ast (p^\ast E)$ also has $E_0$ as a maximal destabilising subobject in $\Ap (X_k)$.

We can now apply elementary modifications to the family $E$.  Since any flat family over $\Spec R$ has the same central fibre as the flat family obtained after the base change via $p^\ast$, by our result over a complete DVR, Theorem \ref{theorem-prelimcompletenessofPTss}, the elementary modifications applied to $E \in D^b(X_R)$ must produce a semistable central fibre after finitely many steps. This proves Theorem \ref{theorem-prelimcompletenessofPTss} again with the additional hypothesis.

We end this section with the proofs of Lemma \ref{PTsspullbackinheart} (the heart is preserved under base change) and Proposition \ref{pro-PTssbasechange} (PT-semistability is preserved under base change).  To this end, we first characterise PT-semistable objects under a coprime assumption on the Chern character:

\begin{pro}\label{pro-PTsscharacterisation}
Let $X$ be a smooth projective three-fold over $k$.  Suppose $E \in \Ap (X)$ has nonzero $ch_0(E)$, and $ch_0(E), ch_1(E)$ are coprime.  Then $E$ is PT-semistable if and only if it is PT-stable, if and only if the following conditions hold:
\begin{itemize}
\item[(1)] $H^{-1}(E)$ is torsion-free and $\mu$-semistable;
\item[(2)] $H^0(E)$ is 0-dimensional;
\item[(3)] $\Hom_{D(X)} (\OO_x, E)=0$ for any $x \in X$.  (Here, $\OO_x$ denotes the skyscraper sheaf with value $k$ supported at $x$.)
\end{itemize}
\end{pro}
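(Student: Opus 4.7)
The plan is to establish the three-way equivalence by systematic phase comparisons, exploiting the $\rho_i$-configuration $\phi(-\rho_2) > \phi(\rho_0) > \phi(-\rho_3) > \phi(\rho_1)$ together with the observation that for any $E$ with $ch_0(E) \neq 0$, the phase $\phi(E)(m)$ tends to $\phi(-\rho_3)$ as $m \to \infty$.

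First I would show PT-semistability implies (1), (2), (3). For \emph{(1)}: any torsion subsheaf $T$ of $H^{-1}(E)$ sits in $\Coh_2$ (since $H^{-1}(E) \in \Coh_{\geq 2}$), so $T[1] \hookrightarrow H^{-1}(E)[1] \hookrightarrow E$ is a subobject in $\Ap$ with leading phase $\phi(-\rho_2) > \phi(-\rho_3)$, contradicting semistability. If $F \subset H^{-1}(E)$ is $\mu$-destabilizing, then $F[1] \hookrightarrow E$ has the same leading direction $-\rho_3$, and comparing $m^2$-coefficients (using that $\textnormal{Im}(\rho_2/\rho_3) > 0$, which follows from $\phi(-\rho_2) > \phi(-\rho_3)$) turns $\mu(F) > \mu(H^{-1}(E))$ into $\phi(F[1]) \succ \phi(E)$. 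For \emph{(2)}: consider the subobject $H^{-1}(E)[1] \hookrightarrow E$, which has the same leading direction as $E$. A 1-dimensional part in $H^0(E)$ would contribute an $m^1$-term to $Z(E) = Z(H^{-1}(E)[1]) + Z(H^0(E))$, and a computation using $\rho_1/(-\rho_3) \in \mathbb{H}$ (which comes from $\phi(\rho_1) < \phi(-\rho_3)$) forces $\phi(H^{-1}(E)[1]) \succ \phi(E)$. For \emph{(3)}: since $\OO_x \in \Coh_{\leq 1} \subset \Ap$ is simple in $\Coh(X)$ and hence in $\Ap$, any nonzero morphism $\OO_x \to E$ in $D^b(X)$ is an injection in $\Ap$; its constant phase $\phi(\rho_0)$ exceeds $\phi(-\rho_3) \leftarrow \phi(E)$ asymptotically, destabilizing $E$.

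For the converse $(1) \wedge (2) \wedge (3) \Rightarrow$ PT-semistable, suppose for contradiction $F \hookrightarrow E$ is destabilizing with $\phi(F) \succ \phi(E)$. Split on $H^{-1}(F)$: if $H^{-1}(F) = 0$, then $F \in \Coh_{\leq 1}$; a 1-dimensional part has phase tending to $\phi(\rho_1) < \phi(E)$, so cannot destabilize, while a 0-dimensional subsheaf of $F$ would produce a nonzero element of $\Hom(\OO_x, E)$, violating (3). If $H^{-1}(F) \neq 0$, then $H^{-1}(F) \hookrightarrow H^{-1}(E)$ is a subsheaf of a torsion-free $\mu$-semistable sheaf, so $\mu(H^{-1}(F)) \leq \mu(H^{-1}(E))$; this, combined with $\textnormal{Im}(\rho_2/\rho_3) > 0$, yields $\phi(F) \preceq \phi(E)$ at the level of the first two leading coefficients, with equality forcing $\mu$-equality. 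The equality case requires a finer analysis of the $m^1$ and $m^0$ coefficients, using 0-dimensionality of $H^0(E)$ from (2) to compute $ch_2(E) = -ch_2(H^{-1}(E))$, and (3) to eliminate destabilizers arising from 0-dimensional subquotients.

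Finally, under the coprimality hypothesis, semistable implies stable: if $F \hookrightarrow E$ is a proper subobject with $\phi(F) = \phi(E)$ as phase germs, matching the leading and sub-leading coefficients forces $ch_0(F)/ch_0(E) \in \mathbb{Q}_{>0}$ and $ch_1(F)/ch_0(F) = ch_1(E)/ch_0(E)$; coprimality of $(ch_0(E), ch_1(E))$ then forces $(ch_0(F), ch_1(F)) = (ch_0(E), ch_1(E))$, so the quotient $E/F \in \Ap$ has zero rank and zero $c_1$, hence supported in dimension $\leq 1$. But then $\phi(E/F)$ tends to $\phi(\rho_1)$ or $\phi(\rho_0)$, neither equal to $\phi(-\rho_3) = \phi(E)$, forcing $E/F = 0$ and thus $F = E$.

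The main obstacle will be the fine-grained analysis of lower-order terms in the converse direction when $\mu(H^{-1}(F)) = \mu(H^{-1}(E))$: here one cannot rely on leading and sub-leading asymptotics alone, and the use of conditions (2) and (3) to rule out 0-dimensional or 1-dimensional contributions in the sub- and sub-quotient structure becomes essential.
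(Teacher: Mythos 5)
Your forward direction and your ``semistable $\Rightarrow$ stable'' step are fine (the paper outsources the former to \cite[Lemma 3.3]{Lo1} and runs the latter through $H^{-1}(F)\cong H^{-1}(E)$ rather than through $E/F$, but these are cosmetic differences). The genuine gap is exactly where you flag ``the main obstacle'': the converse implication $(1)\wedge(2)\wedge(3)\Rightarrow$ PT-semistable, in the case of a destabilizer $F\hookrightarrow E$ with $H^{-1}(F)\neq 0$ and $\mu(H^{-1}(F))=\mu(H^{-1}(E))$. You propose to close this by ``a finer analysis of the $m^1$ and $m^0$ coefficients'' using conditions (2) and (3), but those conditions cannot do the job: the tie at the $m^3$ and $m^2$ level is broken at order $m^1$ by $ch_2$-data of $H^{-1}(F)$ versus $H^{-1}(E)$, about which $\mu$-semistability, $0$-dimensionality of $H^0(E)$, and $\Hom(\OO_x,E)=0$ say nothing. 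Indeed, without the coprimality hypothesis the implication is false as stated with only $\mu$-semistability in (1): PT-semistability forces $H^{-1}(E)$ to be semistable in $\Coh_{3,1}(X)$ (as the paper remarks before Proposition \ref{openness-valuativeversion}), which is strictly stronger than $\mu$-semistability when rank and degree share a common factor. So no analysis of (1)--(3) alone can succeed; the coprimality of $ch_0(E)$ and $ch_1(E)$ must enter precisely here, not only in the stability upgrade at the end.

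The paper's resolution is short and you should adopt it: slope equality $\mu(H^{-1}(F))=\mu(H^{-1}(E))$ together with coprimality of rank and degree forces $\rank H^{-1}(F)=\rank H^{-1}(E)$ and $\deg H^{-1}(F)=\deg H^{-1}(E)$. The cokernel of $H^{-1}(F)\hookrightarrow H^{-1}(E)$ injects into $H^{-1}(E/F)\in\Coh_{\geq 2}(X)$ and has rank and degree zero; a nonzero sheaf in $\Coh_{\geq 2}$ of rank zero is pure $2$-dimensional and hence has positive degree, so the cokernel vanishes and $H^{-1}(F)\cong H^{-1}(E)$. Then $H^0(F)\hookrightarrow H^0(E)$ is a subsheaf of a $0$-dimensional sheaf, and $Z(F)=Z(E)-\rho_0\cdot\bigl(\ell(H^0(E))-\ell(H^0(F))\bigr)$ with $\phi(\rho_0)>\phi(-\rho_3)$ gives $\phi(F)\preceq\phi(E)$, contradicting $\phi(F)\succ\phi(E)$. (One further small point: in your forward direction, $\phi(\rho_1)<\phi(-\rho_3)$ gives $\rho_1/(-\rho_3)\in -\mathbb{H}$, not $\mathbb{H}$; the conclusion you draw is still correct.)
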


\begin{proof}
Suppose $E$ is PT-semistable.  Then properties (1) to (3) follow from \cite[Lemma 3.3]{Lo1} (which partially characterises PT-semistable objects) and the fact that $\phi (\rho_0) > \phi (-\rho_3)$ in the definition of the central charge for PT-stability.

Suppose $E$ satisfies conditions (1) to (3).  Let $E_0 \hookrightarrow E$ be a maximal destabilising subobject.  Then $\phi (E_0) \succ \phi (E)$, and so $E_0$ cannot be 1-dimensional.  By condition (3), $E_0$ is either 2-dimensional or 3-dimensional.  By condition (1), $E_0$ must be 3-dimensional, and $H^{-1}(E_0)$ must be torsion-free with nonzero rank.  Since $\phi (E_0) \succ \phi (E)$, we must have $\mu (H^{-1}(E_0)) \geq \mu (H^{-1} (E))$, forcing $\mu (H^{-1}(E_0)) = \mu (H^{-1}(E))$.  But the degree and rank of $H^{-1}(E)$ are coprime, and so the degrees and ranks of $H^{-1}(E_0)$ and $H^{-1}(E)$ must agree.  So the inclusion $H^{-1}(E_0) \hookrightarrow H^{-1}(E)$ must be an isomorphism (since its cokernel is supposed to be in $\Coh_{\geq 2}(X)$), implying $H^0(E_0) \hookrightarrow H^0(E)$, contradicting $\phi (E_0) \succ \phi (E)$ (since $H^0(E)$ is 0-dimensional by condition (3)).  Therefore, $E$ must have been PT-semistable to start with.

It remains to show that, if $E$ is PT-semistable, then it is PT-stable.  Suppose $F \hookrightarrow E$ is a subobject such that $\phi (F) = \phi (E)$.  Then we must have $\mu (H^{-1}(F))=\mu (H^{-1}(E))$.  As above, the coprime assumption forces $H^{-1}(F) \hookrightarrow H^{-1}(E)$ to be an isomorphism, and so the induced map $H^0(F) \to H^0(E)$ is injective.  By $\phi (F) = \phi (E)$, the inclusion $H^0(F) \hookrightarrow H^0(E)$ must be an isomorphism.  As a result, the inclusion $F \hookrightarrow E$ is a quasi-isomorphism, i.e.\ $F$ and $E$ are isomorphic objects in $D^b(X)$.  This means that there are no strictly PT-semistable objects in $\Ap (X)$, so $E$ is PT-stable.
\end{proof}

We have the easy corollary

\begin{coro}\label{ch0ch1coprimeE00dim}
With the same hypotheses as above, if $H^{-1}(E)$ is torsion-free and $\mu$-semistable (or, equivalently in this case, $\mu$-stable), $H^0(E)$ is 0-dimensional, and $E$ itself is PT-unstable, then any maximal destabilising subobject of $E$ is 0-dimensional.
\end{coro}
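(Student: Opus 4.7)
The plan is to follow the structure of the proof of Proposition~\ref{pro-PTsscharacterisation}, starting from a maximal destabilising subobject $E_0\hookrightarrow E$ with $\phi(E_0)\succ \phi(E)$, and to rule out every dimension of $E_0$ except zero using the PT-configuration of the $\rho_i$.

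First I would constrain the possible dimensions of $E_0$.  Since $H^{-1}(E_0)\hookrightarrow H^{-1}(E)$ and $H^{-1}(E)$ is torsion-free of positive rank, $H^{-1}(E_0)$ is either zero or of full three-dimensional support.  Combined with $H^0(E_0)\in \Coh_{\leq 1}(X)$, this forces $E_0$ to be of dimension $0$, $1$ (with $H^{-1}(E_0)=0$ and $H^0(E_0)$ purely one-dimensional), or $3$, and it suffices to rule out the last two cases.

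For the three-dimensional case I would replay the coprimality argument from Proposition~\ref{pro-PTsscharacterisation}: the inequality $\phi(E_0)\succ\phi(E)$ yields $\mu(H^{-1}(E_0))\geq \mu(H^{-1}(E))$, and by the coprimality of $ch_0$ and $ch_1$ together with $\mu$-stability of $H^{-1}(E)$, the inclusion $H^{-1}(E_0)\hookrightarrow H^{-1}(E)$ must be an isomorphism.  Hence $E/E_0\cong H^0(E)/H^0(E_0)$ lies in $\Coh_{\leq 0}(X)$ and has constant phase $\phi(\rho_0)$, whereas the leading term of $Z(E)(m)$ is a positive multiple of $-\rho_3 \cdot m^3$, so $\phi(E)(m)\to \phi(-\rho_3)$.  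The PT-configuration gives $\phi(\rho_0)>\phi(-\rho_3)$, so $\phi(E/E_0)\succ \phi(E)$, and the seesaw principle forces $\phi(E_0)\prec\phi(E)$, a contradiction.

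For the purely one-dimensional case, $E_0=H^0(E_0)$ has $ch_0=ch_1=0$ and $H\cdot ch_2(E_0)>0$, so the leading term of $Z(E_0)(m)$ is $\rho_1\cdot (H\cdot ch_2(E_0))\cdot m$, giving $\phi(E_0)(m)\to \phi(\rho_1)$.  But $\phi(E)(m)\to \phi(-\rho_3)$ and the PT-configuration forces $\phi(-\rho_3)>\phi(\rho_1)$, so $\phi(E_0)\prec\phi(E)$, again contradicting the destabilising hypothesis.  Hence $E_0$ must be zero-dimensional.  I do not anticipate any real obstacle here: the only mild subtlety is that in the three-dimensional case one must work with $E/E_0$ and the seesaw, since $E_0$ and $E$ share the same leading Chern character and a direct phase comparison at leading order is not decisive.
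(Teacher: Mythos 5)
Your proof is correct and follows essentially the same route as the paper: the corollary is stated there as an immediate consequence of the proof of Proposition \ref{pro-PTsscharacterisation}, whose argument (torsion-freeness of $H^{-1}(E)$ excluding the 2-dimensional case, the phase comparison $\phi(-\rho_3)>\phi(\rho_1)$ excluding the 1-dimensional case, and the coprimality/seesaw argument excluding the 3-dimensional case) is exactly what you replay, minus the condition $\Hom(\OO_x,E)=0$ that would have excluded the 0-dimensional case. The minor points you gloss over (e.g.\ that $E/E_0\neq 0$ because $\phi(E_0)\succ\phi(E)$ is strict) are harmless.
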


\begin{lemma}[Base change preserves the heart]\label{PTsspullbackinheart}
Let $L/K$ be a field extension.  Let $p : X_L \to X_K$ be the induced morphism. If $E \in \Ap (X_K)$, then $p^\ast E \in \Ap (X_L)$.
\end{lemma}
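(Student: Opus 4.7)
The plan is to exploit the flatness of $p$ to reduce the claim to statements about the cohomology sheaves of $E$, and then verify separately that each of the two conditions defining the heart $\Ap$ is stable under base change along $L/K$.

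Since $L$ is faithfully flat over $K$, the morphism $p$ is faithfully flat, so $p^\ast$ is exact on $D(X_K)$ and $H^i(p^\ast E) \cong p^\ast H^i(E)$ for every $i$. Hence $p^\ast E$ is concentrated in degrees $-1$ and $0$, and the claim reduces to showing that $p^\ast F \in \Coh_{\leq 1}(X_L)$ for every $F \in \Coh_{\leq 1}(X_K)$, and $p^\ast G \in \Coh_{\geq 2}(X_L)$ for every $G \in \Coh_{\geq 2}(X_K)$.

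The first inclusion follows because $\Supp (p^\ast F) = p^{-1}(\Supp F)$, and base change by a field extension preserves the Krull dimension of any subscheme of finite type over $K$ (locally, $\dim (A \otimes_K L) = \dim A$ for a finitely generated $K$-algebra $A$). For the second, I plan to use the associated-point characterisation: $G \in \Coh_{\geq 2}(X_K)$ if and only if the closure of every associated point of $G$ in $X_K$ has dimension $\geq 2$. By Matsumura's theorem on associated primes under flat base change (\emph{Commutative Ring Theory}, Theorem 23.2), any associated prime of $p^\ast G$ lies over some $\pp \in \mathrm{Ass}(G)$ and comes from an associated prime of the fibre $(A/\pp)\otimes_K L$ (where $A$ is a local ring of $X_K$). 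In characteristic $0$, the extension $\mathrm{Frac}(A/\pp)/K$ is separable, so $(A/\pp)\otimes_K L$ is reduced, and $\Spec((A/\pp)\otimes_K L)$ is equidimensional of dimension $\dim(A/\pp)$. Since $\dim (A/\pp) \geq 2$, every associated point of $p^\ast G$ has closure of dimension $\geq 2$ in $X_L$, giving $p^\ast G \in \Coh_{\geq 2}(X_L)$.

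The main obstacle is the second inclusion; the first is essentially formal once one has the invariance of Krull dimension under base change by a field. The second leans on two standard ingredients, namely the behaviour of associated primes under flat base change and the reducedness and equidimensionality of $(A/\pp)\otimes_K L$ that result from separability in characteristic $0$; once these are in hand the verification is immediate.
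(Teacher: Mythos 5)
Your proof is correct, but it proves the key inclusion $p^\ast(\Coh_{\geq 2}(X_K)) \subseteq \Coh_{\geq 2}(X_L)$ by a genuinely different route from the paper. The paper argues geometrically: it takes the torsion filtration $G_0 \subseteq G_1 \subseteq G_2 \subseteq G_3 = p^\ast F$ of the pullback, observes that each $G_i$ is carried into itself by every $\sigma \in \Aut(L/K)$ (and hence equals $\sigma^\ast G_i$, by comparing Hilbert polynomials), invokes descent theory to conclude that each $G_i$ is the pullback of a subsheaf $\tilde{G}_i \subseteq F$, and derives a contradiction from $\tilde{G}_1 \neq 0$. Your argument instead works at the level of associated points: you translate $G \in \Coh_{\geq 2}$ into the condition that every associated point of $G$ has closure of dimension $\geq 2$, and then control $\mathrm{Ass}(p^\ast G)$ via flat base change (Matsumura 23.2) together with geometric reducedness and equidimensionality of $(A/\pp)\otimes_K L$ in characteristic $0$. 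Your route buys something real: it avoids descent theory altogether, which is the delicate point of the paper's proof --- for a field extension that is neither Galois nor even algebraic, deducing descent from $\Aut(L/K)$-invariance requires extra work (reduction to finitely generated extensions, separate treatment of the purely transcendental and finite separable cases), which the paper elides. The cost is that your argument leans explicitly on characteristic $0$ to get reducedness of $(A/\pp)\otimes_K L$ (without which embedded primes of small dimension could appear); this is harmless here since the paper fixes $\mathrm{char}\, k = 0$ throughout, and the paper's own argument is no more robust in positive characteristic. The remaining steps --- exactness of $p^\ast$ by flatness, commutation with cohomology, and preservation of $\Coh_{\leq 1}$ via invariance of the dimension of supports --- coincide with the paper's.
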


The following proof was suggested to me by Ziyu Zhang.

\begin{proof}
Take any $F \in \Coh_{\geq 2}(X_K)$.  Consider the torsion-filtration of $p^\ast F$,
$$ G_0 \subseteq G_1 \subseteq G_2 \subseteq G_3= p^\ast F$$
where $G_i$ is the  maximal subsheaf of $p^\ast F$ of dimension $\leq i$; this filtration is unique \cite[p.3]{HL}.  Take any $\sigma \in \Aut (L/K)$.  Then for any $i$, we find that $\sigma^\ast G_i$ is again a subsheaf of $p^\ast F$ of dimension at most $i$.  By the maximality of $G_i$, we have $\sigma^\ast G_i \subseteq G_i$.  However, $\sigma^\ast G_i$ and $G_i$ have the same Hilbert polynomial, and so they must be equal.  Thus each $G_i$ is invariant under $\sigma^\ast$ for all $\sigma \in \Aut (L/K)$, and so by descent theory for sheaves, there are subsheaves $\tilde{G}_i$ of $F$ in $\Coh (X_K)$ such that $G_i = \tilde{G}_i \otimes_K L$ for each $i$.  If $G_1 \neq 0$, then $\tilde{G}_1\neq 0$ also, contradicting $F \in \Coh_{\geq 2}(X_K)$.  Hence $p^\ast F \in \Coh_{\geq 2}(X_L)$.

That $p^\ast (\Coh_{\leq 1}(X_K)) \subseteq \Coh_{\leq 1}(X_L)$ is clear.  Thus, since $p^\ast$ is exact, for any $E \in \Ap (X_K)$ we do have $p^\ast E \in \Ap (X_L)$.
\end{proof}

\begin{pro}[Base change preserves PT-semistability]\label{pro-PTssbasechange}
Let $k \subset K \subset L$ be field extensions, and $p : X_L \to X_K$ the induced map, where $X$ is a smooth projective  three-fold over $k$.  Suppose $E \in \Ap (X_K)$, with $ch_0(E)\neq 0$ and $ch_0(E), ch_1(E)$ coprime.  If $E$ is PT-semistable, then $p^\ast E$ is also PT-semistable.
\end{pro}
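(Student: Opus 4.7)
The plan is to verify, for the pulled-back object $p^\ast E$, the three characterising conditions of Proposition \ref{pro-PTsscharacterisation}, and then invoke that proposition to conclude. By Lemma \ref{PTsspullbackinheart}, $p^\ast E$ lies in $\Ap(X_L)$; since $p$ is flat we have $H^i(p^\ast E) \cong p^\ast H^i(E)$, and ranks and degrees (with respect to the pulled-back polarisation) are preserved under $p^\ast$, so $ch_0(p^\ast E),\, ch_1(p^\ast E)$ remain nonzero and coprime, and Proposition \ref{pro-PTsscharacterisation} does apply. Condition (2) of that proposition holds for $p^\ast E$ because the pullback of a $0$-dimensional sheaf is $0$-dimensional, and for condition (1) torsion-freeness of $p^\ast H^{-1}(E)$ follows from faithful flatness of $p$, while $\mu$-semistability of a torsion-free sheaf is preserved under field extensions in characteristic zero (a classical fact; see for instance \cite{HL}).

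The main step is to verify condition (3), i.e.\ $\Hom_{D(X_L)}(\OO_x, p^\ast E) = 0$ for every closed point $x \in X_L$. Fix such an $x$, set $y := p(x) \in X_K$, and note that since $X_K$ is smooth, $\OO_y$ admits a finite locally free resolution; consequently the standard flat base change for $R\HHom$ yields
\[
\Hom_{D(X_L)}(p^\ast \OO_y,\, p^\ast E) \,\cong\, \Hom_{D(X_K)}(\OO_y, E) \otimes_K L \,=\, 0,
\]
where the last equality uses condition (3) for $E$. Because $p^\ast \OO_y$ is supported on the finite set $p^{-1}(y)$, it splits as a direct sum of skyscrapers at its points, and the summand at $x$ surjects onto $\OO_x$; this gives a short exact sequence $0 \to K \to p^\ast \OO_y \to \OO_x \to 0$ of coherent sheaves on $X_L$ with $K$ zero-dimensional. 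Applying $\Hom(-, p^\ast E)$ to the associated exact triangle produces
\[
\Ext^{-1}(K, p^\ast E) \to \Hom_{D(X_L)}(\OO_x, p^\ast E) \to \Hom_{D(X_L)}(p^\ast \OO_y, p^\ast E).
\]
The spectral sequence $E_2^{p,q} = \Ext^p(K, H^q(p^\ast E)) \Rightarrow \Ext^{p+q}(K, p^\ast E)$ gives $\Ext^{-1}(K, p^\ast E) \cong \Hom(K, H^{-1}(p^\ast E)) = 0$, because $K$ is zero-dimensional while $H^{-1}(p^\ast E)$ is torsion-free (by condition (1) just verified). Hence $\Hom_{D(X_L)}(\OO_x, p^\ast E) = 0$, giving (3); Proposition \ref{pro-PTsscharacterisation} then shows that $p^\ast E$ is PT-stable, in particular PT-semistable.

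The main technical input is the flat base change formula $\Hom_{D(X_L)}(p^\ast F, p^\ast G) \cong \Hom_{D(X_K)}(F, G) \otimes_K L$, which requires $\OO_y$ to be a perfect complex on $X_K$; this is guaranteed by the smoothness of $X_K$. Apart from this, the argument is essentially formal, relying on the characterisation of Proposition \ref{pro-PTsscharacterisation}, the heart-preservation of Lemma \ref{PTsspullbackinheart}, and the preservation of torsion-freeness and $\mu$-semistability under field extension in characteristic zero.
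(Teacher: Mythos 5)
Your strategy---checking the three conditions of Proposition \ref{pro-PTsscharacterisation} for $p^\ast E$ directly, with condition (3) obtained from flat base change of $R\Hom$ out of the perfect complex $\OO_{p(x)}$---is genuinely different from the paper's proof, which establishes $\Hom(\OO_x,p^\ast E)=0$ by dualising: $E\in\Ap_{1/2}$ implies $\mathbb{D}(E)\in\Ap_{1/2}\subset\Ap$ (Toda), hence $\mathbb{D}(p^\ast E)\cong p^\ast\mathbb{D}(E)$ lies in $D^{[-1,0]}$ by Lemma \ref{PTsspullbackinheart}, and the vanishing follows by duality. Your handling of conditions (1) and (2) is correct (torsion-freeness and $\mu$-semistability are indeed preserved under base field extension in characteristic zero), and your argument for (3) is sound \emph{for those closed points $x\in X_L$ lying over closed points of $X_K$}.

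The gap is in the very first step of your verification of (3): you set $y:=p(x)$ and treat $\OO_y$ as a coherent sheaf with finite fibre $p^{-1}(y)$. This presupposes that the image of a closed point of $X_L$ is a closed point of $X_K$, which fails whenever $L/K$ is transcendental---and that is exactly the case the paper needs, since the proposition is invoked for the extension $\hat{K}/K$ of fraction fields of a DVR and its completion. (For instance, with $K=k$ and $L=k(t)$, a closed point of $X_L$ whose closure in $X\times\mathbb{A}^1$ dominates both factors maps to the generic point of a curve in $X$.) For such $x$ the object $\OO_{p(x)}$ is not coherent, $p^{-1}(p(x))$ is positive-dimensional, and neither the splitting $p^\ast\OO_y\cong\bigoplus_i\OO_{x_i}$ nor your base-change isomorphism applies. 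The omission is repairable: if $y=p(x)$ is not closed, then $\Hom(\OO_x,H^0(p^\ast E))=0$ because $H^0(p^\ast E)=p^\ast H^0(E)$ is supported over closed points of $X_K$, and $\Ext^1_{X_L}(\OO_x,H^{-1}(p^\ast E))=0$ because $H^{-1}(E)$ has depth $\geq 1$ at $y$ (torsion-freeness), so its pullback has depth $\geq 1+\dimension\overline{\{y\}}\geq 2$ at $x$ by flatness of the fibre; the spectral sequence $\Ext^s(\OO_x,H^t(p^\ast E))\Rightarrow\Ext^{s+t}(\OO_x,p^\ast E)$ then kills $\Hom_{D(X_L)}(\OO_x,p^\ast E)$. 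But as written your proof only treats the case of closed images, and this case distinction (or the paper's duality argument, which avoids it entirely) must be supplied. A cosmetic point: your kernel sheaf is also called $K$, clashing with the field.
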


\paragraph{Notation.}  Following \cite{TodaLSOp}, we define, for a three-dimensional Noetherian scheme $Y$, the following subcategories of $\Ap (Y)$
\begin{align*}
  \Ap_1 (Y) &:= \langle F[1], \OO_x : F \text{ is a pure 2-dimensional sheaf}, x \in X \rangle, \\
 \Ap_{1/2} (Y) &:= \{ E \in \Ap (Y) : \Hom (F,E)=0 \text{ for all }F \in \Ap_1\}.
\end{align*}
As mentioned in \cite{TodaLSOp}, an object $E \in \Ap (Y)$ is in $\Ap_1 (Y)$ iff $H^{-1}(E)$ is 2-dimensional and $H^0(E)$ is 0-dimensional, while it is in $\Ap_{1/2}(Y)$ iff $H^{-1}(E)$ is torsion-free and $\Hom (\OO_x, E)=0$ for any skyscraper sheaf $\OO_x$.  For any smooth projective three-fold $Y$, we also define the dualising functor
\[ \mathbb{D} : D^b(Y) \to D^b(Y)^{op} : E \mapsto R\HHom (E,\OO_Y[2]).\]

\begin{proof}
Since $p^\ast$ preserves the Chern character, using Corollary \ref{ch0ch1coprimeE00dim}, we just need to show that $\Hom (\OO_x, p^\ast E)=0$ for all closed point $x \in X_L$, where $\OO_x$ denotes the skyscraper sheaf with value $L$ supported at $x$.

Since $E$ is PT-semistable, $E \in \Ap_{1/2}$.  Then $\mathbb{D}(E) \in \Ap_{1/2} \subset \Ap$ by \cite[Lemma 2.17]{TodaLSOp}.  And so $p^\ast \mathbb{D}(E) \in \Ap$ by Lemma \ref{PTsspullbackinheart}.  However, $\mathbb{D}(p^\ast E) \cong p^\ast \mathbb{D}(E)$, so $\mathbb{D}(p^\ast E) \in D^{[-1,0]}_{\Coh (X_L)}$, implying $\Hom_{D(X_L)}(\mathbb{D}(p^\ast E), \OO_x [-1])=0$ for all closed point $x \in X_K$.  Since
\begin{align*}
  \Hom_{D(X_L)}(\mathbb{D}(p^\ast E), \OO_x [-1]) &\cong \Hom_{D(X_L)}(\mathbb{D}(p^\ast E), \mathbb{D}(\OO_x)) \\
  &\cong \Hom_{D(X_L)}(\OO_x, p^\ast E),
  \end{align*}
we have shown that $p^\ast E$ cannot be PT-unstable.
\end{proof}

\paragraph{Remark.}  The proof above takes a different track from the sheaf case.  In the sheaf case \cite{Langton}, the argument is as such: suppose $L/K$ is a field extension over $k$.  Form the base extension
\[
\xymatrix{
  X_L \ar[r] \ar[d]^p & \Spec L \ar[d] \\
  X_K \ar[r] & \Spec K
}
\]
where $X_K = X \otimes_k K$ and $X_L = X \otimes_k L$.  Given a $\mu$-semistable sheaf $E \in \Coh (X_K)$, we show that $p^\ast E$ is $\mu$-semistable by considering the maximal destabilising subobject $E_0 \subseteq p^\ast E$.  Since the inclusion $E_0 \hookrightarrow p^\ast E$ is invariant under Galois automorphisms of $L/K$, we can use descent theory for sheaves to conclude that $E_0$  descends to $X_K$, i.e.\ is the pullback of a sheaf on $X_K$.  However, this argument relies on the uniqueness of HN filtrations for sheaf stability conditions.  For polynomial stabilities in the derived category, we only have uniquness of HN filtrations \textit{up to isomorphism}, which is not  enough in order to make use of descent theory for sheaves.

\section{Openness and Separatedness of PT-Semistability}

\subsection{Openness of PT-Semistability}

By Proposition \ref{pro-PTsscharacterisation}, given any PT-semistable object $E\in \Ap$ with nonzero rank, the following conditions are satisfied:
\begin{enumerate}
\item[A.] $H^{-1}(E)$ is torsion-free and semistable in $\Coh_{3,1}(X)$;
\item[B.] $H^0(E)$ is 0-dimensional;
\item[C.] $\Hom_{D(X)}(k_x,E)=0$ for all $x \in X$.
\end{enumerate}
(That $H^{-1}(E)$ is semistable in $\Coh_{3,1}(X)$, and not just $\mu$-semistable, is not hard to see.)

Let us show that these three conditions together form an open condition for flat families of objects in $\Ap$ in the derived category.  To prove this, we use the following fact: in a Zariski topological space, a set is open if and only if it is constructible and stable under generisation.

\begin{pro}\label{openness-valuativeversion}
For flat families of objects in $\Ap$ of nonzero rank, properties A, B and C together form an open condition.
\end{pro}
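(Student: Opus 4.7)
The plan is to apply the standard topological criterion stated just above the proposition: a subset of a Noetherian scheme is open if and only if it is constructible and stable under generisation. Accordingly, I will verify these two properties for the locus $U \subset S$ of points $s$ at which $E|_s$ satisfies all of A, B and C, where $E$ is a flat family of objects in $\Ap$ over a Noetherian base $S$. Toda's openness of the heart $\Ap$ (\cite[Lemma 3.14]{TodaLSOp}, already invoked in the excerpt) lets me assume throughout that every fibre lies in $\Ap$, so the remaining work concerns A, B and C as conditions inside the heart.

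For constructibility, I would use the boundedness of the stack of objects in $\Ap$ with fixed Chern character, established in \cite{Lo1}, together with Noetherian induction, to stratify $S$ into finitely many locally closed pieces on which the cohomology sheaves of $E$ have constant Hilbert polynomial and form flat families. On each stratum, A reduces to torsion-freeness of a flat family of coherent sheaves together with semistability in the Serre quotient $\Coh_{3,1}$, both of which are constructible by a direct adaptation of Langton's argument. Condition B is the upper-semicontinuity of the dimension of support of the coherent sheaf $H^0(E|_s)$. For condition C, I view the function $(x,s) \mapsto \dim_{\kappa(s)}\Hom_{D(X_s)}(k_x, E|_s)$ as a function on $X \times S$ and apply the semicontinuity theorem for the relative $R\HHom$-sheaf, from which constructibility of the locus where it vanishes identically along a fibre follows.

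For stability under generisation, I reduce to the case of a DVR $R$ with map $\Spec R \to S$ and treat the three conditions separately. C at the generic fibre follows from upper-semicontinuity of $\Hom$-dimensions, combined with the observation that a skyscraper-subobject of $E_\eta$ would specialise, after taking the scheme-theoretic closure of its support in $X_R$, to a skyscraper-subobject of $E_0$, contradicting C at the centre. B at the generic fibre follows from upper-semicontinuity of the dimension of support of the appropriate cohomology sheaf, combined with flatness of the family in $\Ap$ to identify $H^0(E_\eta)$ with $j^\ast H^0(E)$ up to a coherent twist. For A at the generic fibre, I would first extract an $R$-flat model of $H^{-1}$ of $E$ using the torsion-pair structure of $\Ap$ and the assumed torsion-freeness of $H^{-1}(E_0)$, and then invoke the classical openness of $\mu$-semistability and of semistability in $\Coh_{3,1}$ for flat sheaf families (Maruyama/Langton) to conclude that $H^{-1}(E_\eta)$ inherits torsion-freeness and $\Coh_{3,1}$-semistability from the centre.

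The main obstacle will be condition A in the stability-under-generisation step, because $H^{-1}$ does not in general commute with derived pullback: $H^{-1}(L\iota^\ast E)$ can differ from $\iota^\ast H^{-1}(E)$ by Tor terms, so one cannot just read off torsion-freeness or $\Coh_{3,1}$-semistability of $H^{-1}(E)$ from the hypothesis on $H^{-1}(E_0)$. The way around this is to argue, using structural lemmas about $\Ap$-flatness from \cite{Lo1}, that the hypotheses at the central fibre force $H^{-1}(E)$ to be $R$-flat; once this is in place, the sheaf-theoretic openness results apply directly and the proof of the proposition goes through.
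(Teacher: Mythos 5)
Your overall strategy --- openness as constructibility plus stability under generisation, with constructibility handled by a flattening stratification and the generisation step carried out over a DVR --- is exactly the paper's, and your treatments of conditions B and C and of constructibility match the paper's argument in substance. Where you genuinely diverge is condition A in the generisation step. The paper argues in the Langton style, by contradiction: a torsion or destabilising subsheaf $F \subset H^{-1}(j^\ast E)$ is extended to an $R$-flat family $F_R$ with controlled central fibre, the composite $F[1] \hookrightarrow j^\ast E$ is extended to a morphism $F_R[1] \to E$ over $X_R$, and the image of its central fibre yields a $2$-dimensional or $\Coh_{3,1}$-destabilising subobject of $L\iota^\ast E$, contradicting A at the centre. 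You instead reduce directly to the classical sheaf-theoretic statement by showing that $H^{-1}(E)$ is itself an $R$-flat family of sheaves; this is correct --- $\mathrm{Tor}_1^R(H^{-1}(E),k) = H^{-2}(L\iota^\ast E) = 0$ precisely because the family is flat in $\Ap$ --- and is arguably more economical, since it avoids redoing the extension-and-specialisation argument inside the tilted heart. The one step you must still supply is that the central fibre of this flat sheaf family actually satisfies A: $\iota^\ast H^{-1}(E)$ is only a subsheaf of $H^{-1}(L\iota^\ast E)$, with cokernel $\mathrm{Tor}_1^R(H^0(E),k)$. You should note that this cokernel is $0$-dimensional, since its support lies in $\Supp H^0(E) \cap X_k = \Supp H^0(L\iota^\ast E)$, which is $0$-dimensional by B; hence $\iota^\ast H^{-1}(E)$ is torsion-free (a subsheaf of a torsion-free sheaf) and coincides with $H^{-1}(L\iota^\ast E)$ in $\Coh_{3,1}$, so it is semistable there. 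With that line added, the appeal to openness of purity and of $\Coh_{3,1}$-semistability for flat families of sheaves (Maruyama, Langton, \cite[Section 2.3]{HL}) applied to $H^{-1}(E)$ over $\Spec R$ does give A at the generic fibre, and your proof goes through.
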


\begin{proof}
To start with, let us prove that properties A, B and C together are stable under generisation.  Let $(R,\pi)$ be a DVR, and let $E$ be a flat family of objects in $\Ap$ over $\Spec R$ such that the central fibre $L\iota^\ast E$ satisfies propeties A, B and C.

Suppose $H^{-1}(j^\ast E)$ is not torsion-free.  Then we can find a pure 2-dimensional subsheaf $F \subset j^\ast H^{-1}(E)$ with torsion-free cokernel, giving us an injection $F[1] \hookrightarrow j^\ast H^{-1}(E)[1]$ in $\Ap$.  We can extend $F$ to an $R$-flat family $F_R$ of pure 2-dimension sheaves, and extend the composition $F[1] \hookrightarrow j^\ast H^{-1}(E)[1] \hookrightarrow j^\ast E$ to a morphism $\al_R : F_R \to E$ whose derived restriction to $X_k$ is nonzero.  But then $\image (L\iota^\ast \al_R)$, a sheaf sitting at degree $-1$, would be a 2-dimensional subobject of $L\iota^\ast E$, contradicting $H^{-1}(L\iota^\ast E)$ being torsion-free.  Thus $H^{-1} (j^\ast E)$ must be torsion-free.  The proof that $\Hom_{D(X_k)} (k_x,L\iota^\ast E)=0$ for all $x \in X_k$ implies $\Hom_{D(X_K)} (k_x,j^\ast E)=0$ for all $x \in X_K$ is similar.

To see why $H^0(j^\ast E)$ is 0-dimensional, write $E$ as a complex $E^\bullet = [\cdots \to E^{-1} \to E^0]$ where each $E^i$ is a locally free sheaf sitting at degree $i$.  By assumption, $H^0(L\iota^\ast E) \cong \iota^\ast H^0(E^\bullet)$ is 0-dimensional.  By semicontinuity, $H^0(j^\ast E) \cong j^\ast H^0(E)$ is also 0-dimensional.

Next, suppose $H^{-1}(j^\ast E)$ is not semistable in $\Coh_{3,1}$.  Let $F \hookrightarrow j^\ast H^{-1}(E)$ be a torsion-free sheaf that represents the maximal destabilising subsheaf in $\Coh_{3,1}$.  Then we have the inequality $p_{3,1}(F) \succ p_{3,1} (j^\ast H^{-1}(E))$ for the reduced Hilbert polynomials.  We can then extend $F$ to a torsion-free sheaf $F_R$ on $X_R$ whose central fibre is also torsion-free and semistable in $\Coh_{3,1}$ \cite[Theorem 2.B.1]{HL}.  Then the composition $F[1] \hookrightarrow j^\ast H^{-1}(E)[1] \hookrightarrow j^\ast E$ extends to a morphism $F_R[1] \to E$ on $X_R$ whose central fibre is a nonzero morphism $\al_0 : i^\ast F_R[1] \to L\iota^\ast E$.  Note that $\image (\al_0)$ is a sheaf sitting at degree $-1$.

Now we have an exact sequence of coherent sheaves
\[
  i^\ast F_R \overset{\beta}{\to} \image (\al_0) [-1] \to H^0(\kernel \al_0) \to 0,
\]
and $p_{3,1}(i^\ast F_R) \preceq p_{3,1} (\image \beta) \preceq p_{3,1}(\image (\al_0) [-1])$.  On the other hand, we have $p_{3,1}(F) \succ p_{3,1} (j^\ast H^{-1}(E))=p_{3,1}(H^{-1}(L\iota^\ast E))$ (the last equality follows from the $R$-flatness of $E$, and both $H^0(L\iota^\ast E)$ and $H^0 (j^\ast E)$ being 0-dimensional).  Overall, we have $p_{3,1}(\image (\al_0)[-1]) \succ p_{3,1}(H^{-1}(L\iota^\ast E))$, making $\image (\al_0)[-1]$  a destabilising subsheaf of $H^{-1}(L\iota^\ast E)$ in $\Coh_{3,1}$, a contradiction.

It remains to show that, given a noetherian scheme $S$ over $k$, and a flat family $E$ of objects in $\Ap$ over $S$ with fibres $E_s$ satisfying $ch_0 \neq 0$, the locus in $S$ over which $E_s$ satisfies properties A, B and C is constructible.  Using a flattening stratification, we can assume that each of the cohomology sheaves $H^i(E)$ of $E$ is flat over $S$.  Since each of properties A, B and C is an open condition for flat families of sheaves, we are done.
\end{proof}

\begin{lemma}\label{lemma-mdsbounded}
Fix an integer $r>0$.  For any $d, \beta, n$, define the set of injections in $\Ap$
\begin{multline*}
\mathcal{S} := \{ E_0 \hookrightarrow E : E_0 \text{ is a maximal destabilising subobject of $E$ in $\Ap$},  \\
 \text{where $E$ has properties A, B and C and } ch(E)=(-r,-d,\beta,n) \}.
 \end{multline*}
  Then the set
  \[
 \mathcal{S}_{sub} := \{ E_0 : E_0 \hookrightarrow E \text{ is in }\mathcal{S}\}
 \]
is bounded.
\end{lemma}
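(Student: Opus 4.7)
The strategy is to pin the Chern character of $E_0$ to a finite set and then invoke the boundedness theorem for PT-semistable objects of fixed Chern character from \cite{Lo1}, using the elementary fact that a finite union of bounded families is bounded. To start, I would rule out low-dimensional $E_0$: property C directly excludes any $0$-dimensional subobject in $\Ap$, since any nonzero $0$-dimensional subsheaf splits off a copy of $\OO_x$; property A (torsion-freeness of $H^{-1}(E)$) excludes $2$-dimensional $E_0$, for such an $E_0$ would have $H^{-1}(E_0)$ a nonzero $2$-dimensional subsheaf of the torsion-free $H^{-1}(E)$; and the configuration $\phi(\rho_1) \prec \phi(-\rho_3)$ excludes $1$-dimensional $E_0$, whose asymptotic phase $\phi(\rho_1)$ would be strictly smaller than $\phi(-\rho_3)$, the asymptotic phase of $E$. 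Hence $E_0$ has $3$-dimensional support; being the first HN factor of $E$, it is PT-semistable, and \cite[Lemma 3.3]{Lo1} then gives that $H^{-1}(E_0)$ is torsion-free and $\mu$-semistable while $H^0(E_0) \in \Coh_{\leq 0}(X)$.

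Next, the long exact cohomology sequence of $0 \to E_0 \to E \to E/E_0 \to 0$ in $\Ap$, combined with property B and $H^{-1}(E/E_0) \in \Coh_{\geq 2}(X)$, yields a short exact sequence of coherent sheaves $0 \to H^{-1}(E_0) \to H^{-1}(E) \to H^{-1}(E/E_0) \to 0$ whose cokernel lies in $\Coh_{\geq 2}(X)$. Comparing the leading (degree-$3$) coefficient of the central charge in $\phi(E_0) \succ \phi(E)$ forces $\mu(H^{-1}(E_0)) \geq \mu(H^{-1}(E)) = d/r$, while the $\Coh_{3,1}$-semistability of $H^{-1}(E)$ from property A enforces the reverse inequality. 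Writing $ch(E_0) = (-r', -d', \beta', n')$, this $\mu$-slope equality pins $r' \in \{1, \dots, r\}$ and $d' = r'd/r$ to finitely many values.

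To bound $(\beta', n')$, I would combine the Bogomolov inequality applied to the $\mu$-semistable sheaf $H^{-1}(E_0)$, which bounds $ch_2(H^{-1}(E_0)) \cdot H$ from above in terms of the now-fixed data $(r', d', H)$, with the observation that the cokernel $H^{-1}(E)/H^{-1}(E_0) \in \Coh_{\geq 2}(X)$ has effective $2$-cycle class, so the identity $ch_2(H^{-1}(E_0)) = -\beta - ch_2(H^{-1}(E)/H^{-1}(E_0))$ produces a matching lower bound. These two-sided bounds, together with the $\Coh_{3,1}$-semistability of $H^{-1}(E)$ constraining the reduced Hilbert polynomial of the subsheaf $H^{-1}(E_0)$, squeeze $\beta' = -ch_2(H^{-1}(E_0))$ into finitely many numerical classes; a parallel argument at the next order of the polynomial phase, together with the length control on $H^0(E_0)$ coming from $ch_3(E_0)$ being fixed, pins $n'$ to finitely many values. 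The boundedness theorem of \cite{Lo1} applied to each admissible Chern character then supplies a bounded family of PT-semistable objects, and their finite union is bounded.

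The main obstacle is the third step: bounding $(\beta', n')$. The $\mu$-slope comparison handles $(r', d')$ cleanly from a single leading-term inequality, but controlling the higher Chern classes of the subobject requires carefully coordinating the Bogomolov inequality, the $\Coh_{3,1}$-semistability of $H^{-1}(E)$, and the effective nature of cokernels in $\Coh_{\geq 2}(X)$, while keeping track of how these interact with the polynomial-phase asymptotics governing $\phi(E_0) \succ \phi(E)$.
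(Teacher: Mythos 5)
Your opening moves match the paper's: the dimension analysis forcing $E_0$ to be $3$-dimensional (property C kills $0$-dimensional subobjects, torsion-freeness of $H^{-1}(E)$ kills $2$-dimensional ones, and $\phi(\rho_1)<\phi(-\rho_3)$ kills $1$-dimensional ones) is exactly the paper's first paragraph, and the slope comparison pinning down the rank and degree of $H^{-1}(E_0)$ is likewise correct (the paper in fact extracts the stronger equality $p_{3,1}(H^{-1}(E_0))=p_{3,1}(H^{-1}(E))$ from the $\Coh_{3,1}$-semistability in property A). The problem is your third step, which is where the real content of the lemma lies. First, the cokernel $H^{-1}(E)/H^{-1}(E_0)=H^{-1}(E/E_0)$ has rank $r-r'$, which is generally positive, so its $ch_2$ is \emph{not} an effective curve class; the claimed lower bound on $ch_2(H^{-1}(E_0))\cdot H$ via effectivity of the cokernel's $2$-cycle has no justification. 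Second, even a two-sided bound on $ch_2(H^{-1}(E_0))\cdot H$ only controls the $H$-degree of $\beta'$, not the class $\beta'$ itself in $A_1(X)$, so it does not by itself confine $ch(E_0)$ to finitely many values. Third, and most seriously, your treatment of $n'$ assumes ``$ch_3(E_0)$ being fixed,'' which is never established: the inequality $\phi(E_0)\succ\phi(E)$, together with the equality of all lower-order terms of the reduced Hilbert polynomials, yields only a \emph{lower} bound on $ch_3(E_0)$ (equivalently an upper bound on $ch_3(E/E_0)$). Producing the complementary bound is precisely the hard part.

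The paper closes this gap by bounding the quotients rather than attacking $ch(E_0)$ directly: since $H^{-1}(E/E_0)$ is a quotient of the $\Coh_{3,1}$-semistable sheaf $H^{-1}(E)$ with the same reduced Hilbert polynomial $p_{3,1}$, it is itself semistable in $\Coh_{3,1}$, in particular $\mu$-semistable, and its $ch_3$ is bounded below by the argument above; Maruyama's boundedness theorem \cite[Theorem 4.8]{MaruBFTFS} then makes $\{H^{-1}(E/E_0)\}$ a bounded family. Combined with the fact that $H^0(E/E_0)$ is a quotient of the $0$-dimensional sheaf $H^0(E)$, this bounds $\{E/E_0\}$, hence confines $ch(E_0)$ to a finite set, and \cite[Proposition 3.4]{Lo1} finishes. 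If you want to salvage your outline, you should replace the Bogomolov/effectivity step with an appeal to a boundedness theorem of this type applied to the $\mu$-semistable quotients $H^{-1}(E/E_0)$.
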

\begin{proof}
Take an injection $E_0 \hookrightarrow E$ in $\mathcal{S}$.  Then $E_0$ cannot be 1-dimensional, or else it would not be destabilising for $E$, which is 3-dimensional.  And $E_0$ cannot be 2-dimensional, for $H^{-1}(E)$ is torsion-free.  Since $E$ has property $C$, neither is $E_0$ 0-dimensional.  Hence $E_0$ has to be 3-dimensional.

Since $E_0$ is PT-semistable, we know $H^0(E_0)$ is 0-dimensional, as is $H^0(E)$.  Therefore, the semistability of $H^{-1}(E)$ in $\Coh_{3,1}$ implies that we have the equalities $p_{3,1}(H^{-1}(E_0)) = p_{3,1} (H^{-1}(E))= p_{3,1}(E)$.  And because $\phi (E_0) \succ \phi (E)$, there is a lower bound for $ch_3 (E_0)$, say $ch_3(E_0) \geq \al$.  Then $ch_3 (E/E_0)$ has an upper bound, namely $n-\al$.  Therefore, we have $ch_3 (H^{-1}(E/E_0)) = - (ch_3 (E/E_0)- ch_3 (H^0(E/E_0))) \geq \al - n$, i.e.\ there is a lower bound for $ch_3 (H^{-1}(E/E_0))$.

On the other hand, $H^{-1}(E/E_0)$ and $H^{-1}(E)$ have the same $p_{3,1}$, while we have a quotient $H^{-1}(E) \twoheadrightarrow H^{-1}(E/E_0)$ in $\Coh_{3,1}$.  By the semistability of $H^{-1}(E)$ in $\Coh_{3,1}$, $H^{-1}(E/E_0)$ is also semistable in $\Coh_{3,1}$.  In particular, $H^{-1}(E/E_0)$ is $\mu$-semistable.  Therefore, by \cite[Theorem 4.8]{MaruBFTFS}, the set
\[
\{ H^{-1}(E/E_0) : E_0 \hookrightarrow E \text{ is in }\mathcal{S} \}
\]
is bounded.  Therefore, $\{ ch_3 (H^{-1}(E/E_0)) : E_0 \hookrightarrow E \text{ in }\mathcal{S} \}$ is bounded, which implies $\{ ch_3 (H^0 (E/E_0)) : E_0 \hookrightarrow E \text{ in } \mathcal{S} \}$ is bounded from above (since $ch_3 (E/E_0)$ is so, from the previous paragraph), hence bounded.  However, for any $E_0 \hookrightarrow E$ in $\mathcal{S}$, the sheaf $H^0(E/E_0)$ is a quotient of the 0-dimensional sheaf $H^0(E)$.  Hence $\{ H^0 (E/E_0) : E_0 \hookrightarrow E \text{ in } \mathcal{S} \}$ is bounded, implying \[
\{E/E_0 : E_0 \hookrightarrow E \text{ in } \mathcal{S}\}
\]
 is bounded.  This is a little more than we need.  However, that $\{ ch (E/E_0) : E_0 \hookrightarrow E \text{ in }\mathcal{S}\}$ is bounded now implies $\{ ch (E_0) : E_0 \hookrightarrow E \text{ in }\mathcal{S}\}$ is bounded.  This, coupled with the  boundedness of PT-semistable objects \cite[Proposition 3.4]{Lo1}, implies that the set
 \[
 \mathcal{S}_{sub} = \{ E_0 : E_0 \hookrightarrow E \text{ in } \mathcal{S} \}
 \]
 itself is bounded.
\end{proof}

\begin{pro}[Openness of PT-semistability]\label{pro-openness}
Let $S$ be a Noetherian scheme over $k$, and $E \in D^b(X \times_{\Spec k} S)$ be a flat family of objects in $\Ap$ over $S$ with $ch_0 \neq 0$.  Suppose $s_0 \in S$ is a point such that $E_{s_0}$ is PT-semistable.  Then there is an open set $U \subseteq S$ containing $s_0$ such that for all points $s \in U$, the fibre $E_s$ is PT-semistable.
\end{pro}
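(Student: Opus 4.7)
The plan is to show that the PT-unstable locus $Z := \{s \in S : E_s \text{ is not PT-semistable}\}$ is closed in $S$, equivalently that the PT-semistable locus is open. A subset of a Noetherian scheme is closed if and only if it is constructible and stable under specialization, so I will verify both of these for $Z$. First, by Proposition \ref{openness-valuativeversion}, I may shrink $S$ to the open subscheme over which every fibre of $E$ satisfies properties A, B, C; over this new $S$, PT-semistability fails at $s$ only when $E_s$ admits a 3-dimensional subobject $F \hookrightarrow E_s$ in $\Ap$ with $\phi(F) \succ \phi(E_s)$, as in the proof of Lemma \ref{lemma-mdsbounded}.

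For constructibility, I invoke Lemma \ref{lemma-mdsbounded}: the set of maximal destabilizing subobjects arising from such fibres forms a bounded family in $\Ap$, so only finitely many Chern characters $v$ occur among them. For each such $v$, the pairs $(s, F \hookrightarrow E_s)$ with $F \in \Ap$ and $ch(F) = v$ are parametrized by a finite-type substack $T_v$ of a relative version over $S$ of Lieblich's stack of universally gluable complexes (with the destabilization condition $\phi(F) \succ \phi(E_s)$ being determined by $ch(F)$ and $ch(E_s)$ alone). Chevalley's theorem applied to the projection $T_v \to S$ gives that each image is constructible, and $Z$ is the finite union of these images, hence constructible.

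For stability under specialization, let $(R,\pi)$ be a DVR over $k$ with fraction field $K$, and let $\Spec R \to S$ be a morphism whose generic point maps into $Z$. Denote the pullback by $E_R$, so $E_\eta := j^\ast E_R$ is PT-unstable. Choose a maximal destabilizing subobject $F_\eta \hookrightarrow E_\eta$ in $\Ap_K$. Since $E_\eta$ satisfies properties A, B, C, the argument in Lemma \ref{lemma-mdsbounded} forces $F_\eta$ to be $3$-dimensional, so $ch_0(F_\eta) \neq 0$. By Theorem \ref{theorem-prelimcompletenessofPTss} applied to $F_\eta$, I extend it to a flat family $F_R \in D^b(X_R)$ of objects in $\Ap$ over $\Spec R$ whose central fibre $L\iota^\ast F_R$ is PT-semistable. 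The inclusion $F_\eta \hookrightarrow E_\eta$ lies in $\Hom_{X_R}(F_R, E_R) \otimes_R K$, so after clearing denominators some multiple $\pi^N \cdot (F_\eta \hookrightarrow E_\eta)$ lifts to a morphism $\alpha : F_R \to E_R$; dividing out by the largest power of $\pi$ dividing $\alpha$, I may assume that the restriction $L\iota^\ast \alpha : L\iota^\ast F_R \to E_0$ is nonzero in $\Ap_k$. Its image $G := \image(L\iota^\ast \alpha) \hookrightarrow E_0$ in $\Ap_k$ is a quotient of the PT-semistable object $L\iota^\ast F_R$, so $\phi(G) \succeq \phi(L\iota^\ast F_R) = \phi(F_\eta) \succ \phi(E_\eta) = \phi(E_0)$, where the middle equalities use local constancy of the Chern character. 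Hence $E_0$ is PT-unstable, and the closed point of $\Spec R$ also maps into $Z$.

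The main obstacle is the extension step in the specialization argument: realizing the generic destabilizing inclusion as the restriction of a morphism of families over $\Spec R$ whose central fibre is nonzero. This combines universal closedness (Theorem \ref{theorem-prelimcompletenessofPTss}) with careful bookkeeping of the functors $L\iota^\ast$ and $\iota_\ast$ developed in \cite{Lo1}; once the extension is in hand, local constancy of the Chern character mechanically propagates the strict phase inequality from the generic fibre to the central fibre, yielding the desired destabilization.
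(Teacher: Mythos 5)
Your overall strategy coincides with the paper's: shrink $S$ so that every fibre satisfies properties A, B, C (Proposition \ref{openness-valuativeversion}), deduce constructibility of the unstable locus from the boundedness in Lemma \ref{lemma-mdsbounded} together with a finite-type parametrization of destabilising subobjects and Chevalley's theorem, and then kill specializations into the unstable locus by extending a PT-semistable destabiliser across a DVR via Theorem \ref{theorem-prelimcompletenessofPTss} and comparing phases on the central fibre. (The paper packages the last two steps as the construction of a scheme $\pi : Q \to S$ following Toda and the claim $s_0 \notin \overline{\pi(Q)}$, which is the same thing.) There is, however, a genuine gap at the point where you apply Theorem \ref{theorem-prelimcompletenessofPTss} to $F_\eta$. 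That theorem requires $ch_0(F_\eta) \neq 0$, and your justification that $F_\eta$ is $3$-dimensional invokes the argument of Lemma \ref{lemma-mdsbounded}, which presupposes that $E_\eta$ satisfies properties A, B and C. But $E_\eta$ is the pullback of $E_s$ along the field extension $k(s) \hookrightarrow K$, and you only arranged A, B, C for the fibres over the residue fields $k(s)$. Neither the $\Coh_{3,1}$-semistability in property A nor property C (vanishing of $\Hom(\OO_x,-)$ at all closed points of $X_K$, including those not defined over $k(s)$) transfers for free under field extension; this is precisely the base-change subtlety the paper isolates in Lemma \ref{PTsspullbackinheart} and Proposition \ref{pro-PTssbasechange}, where naive descent of HN filtrations fails for polynomial stability and one resorts to the dualising functor $\mathbb{D}$. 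Without this input, $F_\eta$ could a priori be $0$-dimensional, and Theorem \ref{theorem-prelimcompletenessofPTss} would not apply. The paper's own proof sidesteps the issue by allowing the destabiliser over the generic point to be $0$-dimensional and handling that branch with properness of the Quot scheme; you should either add that branch or supply the base-change argument for A and C.

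A smaller issue of the same flavour occurs at the closed point: you conclude that the image point $s'$ lies in $Z$ from the instability of $E_0 = L\iota^\ast E_R$, but $E_0$ is the pullback of $E_{s'}$ along the residue field extension of $R$ over $k(s')$, which need not be trivial. Passing instability back down requires either choosing the DVR so that its residue field is $k(s')$ (the paper arranges this by mapping in a smooth curve over $k = \bar{k}$ through a $k$-point), or invoking Proposition \ref{pro-PTssbasechange}, which the paper only proves under a coprimality hypothesis on $ch_0, ch_1$.
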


\begin{proof}
Since Chern classes are locally constant for flat families of complexes, we may assume that $ch_0(E_s) \neq 0$ for all $s \in S$.   By Proposition \ref{openness-valuativeversion}, we can assume that the fibre $E_s$ satisfies properties A, B and C for every $s \in S$.  Since $\mathcal{S}_{sub}$ is bounded, using the same argument as in \cite[Proposition 3.16]{TodaLSOp} and \cite[Proposition 3.17]{TodaK3}, we know that there exists a scheme $\pi : Q \to S$ of finite type over $S$, a relatively perfect  complex $\mathscr{E}_0 \in D(X \times Q)$ (see \cite[Definition 2.1.1]{Lieblich}) that is a flat family of objects in $\Ap (X)$ over $Q$, and a morphism $\al : \mathscr{E}_0 \to E_Q$ such that, for every $q \in Q$, the fibre $\al_q : (\mathscr{E}_0)_q \to (E_Q)_q=E_{\pi(q)}$ is an injection in $\Ap (X)$ with $\phi (\mathscr{E}_0)_q \succ \phi (E_Q)_q$, and any maximal destabilising subobject $E_0 \hookrightarrow E_s$ for some $s \in S$ occurs as a fibre $\al_q$ of $\al$ where $q \in \pi^{-1} (s)$.

If we can show that $s_0 \notin \overline{\pi (Q)}$, then we can conclude that PT-semistability is an open condition.  Suppose $s_0 \in \overline{\pi (Q)}$.  Then we can find a smooth curve $C$, a closed point $p \in C$, and a map $\gamma : C \to S$ taking $p$ to $s_0$, making the diagram commute:
\[
\xymatrix{
  W \arinj[r] & C \setminus \{p\} \ar[r] \ar[d] & Q \ar[d]^\pi \\
  & C \ar[r]^\gamma & S
}
\]
 where $W$ denotes the generic point of $\Spec \hat{\OO}_{C, p}$.  Let $\overline{W}$ denote $\Spec \hat{\OO}_{C,p}$.  The restriction $\al |_W : \mathscr{E}_0 |_W \to E_{Q} |_W$ is then an injection in $\Ap (X \times W)$ with $\phi ( \mathscr{E}_0 |_W) \succ \phi (E_Q |_W)$.  Let $B_W$ denote a maximal destabilising subobject of $\mathscr{E}_0 |_W$ with respect to PT-semistability.  Since $H^{-1}(E_Q |_W)$ is torsion-free, either $B_W$ is 0-dimensional (hence a sheaf) or is 3-dimensional.  Then we have an injection $\al'_W : B_W \hookrightarrow E_Q |_W$ where $B_W$ is PT-semistable.  By the properness of quot scheme (if $B_W$ is 0-dimensional) or Theorem \ref{theorem-prelimcompletenessofPTss} (if $B_W$ is 3-dimensional, hence of nonzero rank), we can extend $B_W$ to a flat family $B$ of PT-semistable objects over $\overline{W}$.  By \cite[Lemma 3.18]{TodaLSOp}, we can extend $\al'_W$ to a morphism $\al' : B \to E_Q |_{\overline{W}}$ with nonzero central fibre, which gives a destabilising subobject of the central fibre of $E_Q |_{\overline{W}}$, namely $E_{s_0}$, contradicting the PT-semistability of $E_{s_0}$.  This shows $s_0 \notin \overline{\pi (Q)}$, thus proving the proposition.
\end{proof}

The property of being PT-stable is easily seen to be an open property, too:

\begin{pro}[Openness of PT-stability]\label{pro-opennessofPTstab}
Let $S$ be a Noetherian scheme over $k$, and $E \in D^b(X \times_{\Spec k} S)$ be a flat family of objects in $\Ap$ over $S$ with $ch_0\neq 0$.  Suppose $s_0 \in S$ is a point such that $E_{s_0}$ is PT-stable.  Then there is an open set $U \subseteq S$ containing $s_0$ such that for all points $s \in U$, the fibre $E_s$ is PT-stable.
\end{pro}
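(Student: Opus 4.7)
The approach mirrors that of Proposition \ref{pro-openness}, modified to treat PT-stability rather than PT-semistability. First, by Proposition \ref{pro-openness}, we may replace $S$ by the open neighbourhood of $s_0$ on which every fibre $E_s$ is PT-semistable. On this open set, $E_s$ fails to be PT-stable precisely when it admits a proper injection $F \hookrightarrow E_s$ in $\Ap$ with $\phi (F) = \phi (E_s)$ and $ch(F) \neq ch(E_s)$, and so it suffices to show that this strictly PT-semistable locus $Z \subseteq S$ is closed.

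Following the blueprint of Lemma \ref{lemma-mdsbounded}, one first checks that the set of such proper subobjects $F$, as $E$ ranges over PT-semistable objects of the fixed Chern character $(-r,-d,\beta,n)$ with $r > 0$ (which automatically satisfy properties A, B and C), is bounded. The dimension analysis is the same: the phase configuration $\phi (-\rho_2) > \phi (\rho_0) > \phi (-\rho_3) > \phi (\rho_1)$ together with torsion-freeness of $H^{-1}(E)$, $0$-dimensionality of $H^0(E)$ and property C forces $F$ to be $3$-dimensional; then $\phi (F) = \phi (E)$ forces $p_{3,1}(H^{-1}(F)) = p_{3,1}(H^{-1}(E))$ with $H^{-1}(F)$ torsion-free and $\mu$-semistable, whence boundedness follows from \cite[Theorem 4.8]{MaruBFTFS} and \cite[Proposition 3.4]{Lo1}.

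With this boundedness, one mimics the construction in the proof of Proposition \ref{pro-openness} (see also \cite[Proposition 3.16]{TodaLSOp}, \cite[Proposition 3.17]{TodaK3}) to produce a finite-type $S$-scheme $\pi : Q \to S$ and a universal morphism $\mathscr{E}_0 \hookrightarrow E_Q$ whose geometric fibres realise all such proper subobjects, stratified by the finitely many possible Chern characters. Then $Z = \pi (Q)$ is constructible in $S$, and it remains to verify $Z$ is stable under specialisation. Given $s_0 \in \overline{Z}$, a smooth curve $\gamma : C \to S$ with $\gamma (p) = s_0$ and $\gamma (C \setminus \{p\}) \subseteq Z$, and $\overline{W} = \Spec \hat{\OO}_{C,p}$ with generic point $W$, lifting through $Q \to S$ after a finite base change gives a proper subobject $F_W \hookrightarrow E|_W$ that is itself PT-semistable (e.g.\ by passing to a JH factor) of fixed Chern character $c \neq ch(E|_W)$. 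Since $F_W$ is $3$-dimensional and hence of nonzero rank, Theorem \ref{theorem-prelimcompletenessofPTss} extends $F_W$ to an $\overline{W}$-flat family $F$ of PT-semistable objects, and \cite[Lemma 3.18]{TodaLSOp} extends the inclusion to a morphism $\widetilde{\al} : F \to E|_{\overline{W}}$ with nonzero central fibre. The image $I$ of $L\iota^\ast \widetilde{\al}$ in $\Ap (X_k)$ is a subobject of $E_{s_0}$ with $\phi (I) = \phi (E_{s_0})$, so $s_0 \in Z$ provided $I \neq E_{s_0}$.

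The principal technical obstacle is guaranteeing that $I$ is a proper subobject of $E_{s_0}$. If instead $I = E_{s_0}$, then $L\iota^\ast F$ surjects onto $E_{s_0}$ with nonzero kernel $K$ of Chern character $c - ch(E_{s_0})$; a case analysis using semistability of $L\iota^\ast F$ and the phase configuration (which rules out $K$ being $0$-, $1$- or $2$-dimensional exactly as in the boundedness argument above) forces $K$ to be $3$-dimensional. In this situation one iterates by working with the quotient $E|_W \twoheadrightarrow G_W := E|_W/F_W$, which is PT-semistable of nonzero rank and so extends via Theorem \ref{theorem-prelimcompletenessofPTss} to an $\overline{W}$-flat family $G$; the dual application of \cite[Lemma 3.18]{TodaLSOp} yields a morphism $E|_{\overline{W}} \to G$ whose central-fibre kernel produces the required proper subobject of $E_{s_0}$. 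Careful bookkeeping of the Chern-character stratification of $Q$ and the interplay between sub- and quotient-extensions is the crux of the argument.
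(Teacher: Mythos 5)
Your first three paragraphs track the paper's own proof exactly: the paper reduces via Proposition \ref{pro-openness} to a family all of whose fibres are PT-semistable, observes that the set $\mathcal{S}'$ of equal-phase subobjects of the fibres is bounded by the argument of Lemma \ref{lemma-mdsbounded} (each such subobject being automatically PT-semistable), and then declares that ``exactly the same proof as in Proposition \ref{pro-openness}'' finishes the job. Up to that point you and the paper agree, and your version is the more detailed one.

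The problem is your final paragraph. You are right that, unlike in the semistable case where the strict inequality $\phi(\mathscr{E}_0|_W) \succ \phi(E_Q|_W)$ automatically prevents the limiting subobject from being all of $E_{s_0}$, here one must check that $I := \image(L\iota^\ast\widetilde{\al})$ is a \emph{proper} subobject of $E_{s_0}$; the paper elides this. But your handling of the case $I = E_{s_0}$ is wrong. If $\al_0 := L\iota^\ast\widetilde{\al}$ were surjective with kernel $K$, then $ch(K) = ch(F_W) - ch(E|_W) = -\,ch(E|_W/F_W)$; since $E|_W/F_W$ is a nonzero object of $\Ap(X_K)$, we get $Z(K)(m) \in -\mathbb{H}$ for $m \gg 0$, whereas $K$ would be a nonzero object of $\Ap(X_k)$ and hence would satisfy $Z(K)(m) \in \mathbb{H}$ --- a contradiction, so $I = E_{s_0}$ simply cannot occur and the proof closes there. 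Your case analysis instead concludes that $K$ ``is forced to be $3$-dimensional,'' which is false: $ch_0(K) = \rank H^{-1}(E|_W) - \rank H^{-1}(F_W) \geq 0$, while every object of $\Ap$ has $ch_0 \leq 0$, so $H^{-1}(K)$ has rank zero and, being a subsheaf of the torsion-free sheaf $H^{-1}(L\iota^\ast F)$, vanishes; thus $K$ would have to be a sheaf in $\Coh_{\leq 1}(X_k)$. The iteration you build on the false conclusion is moreover unavailable precisely where you invoke it: in that situation $\rank H^{-1}(F_W) = \rank H^{-1}(E|_W)$, so the quotient $G_W = E|_W/F_W$ has $ch_0(G_W) = 0$ and Theorem \ref{theorem-prelimcompletenessofPTss}, which requires nonzero rank, cannot be used to extend it over $\overline{W}$. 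Replace the last paragraph by the short Chern-character argument above and your proof is complete.
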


\begin{proof}
By Proposition \ref{pro-openness}, it suffices to assume that $E$ is a family where each fibre $E_s$ is a PT-semistable object.  Then we can consider the set
\begin{equation*}
\mathcal{S}' := \{ E_0 : E_0 \text{ is a subobject of } E_s \text{ for some }s \in S,  \text{ and }\phi (E_0) = \phi (E_s) \}
\end{equation*}
as in the proof of Lemma \ref{lemma-mdsbounded}.  Since $\{ ch(E_0) : E_0 \in \mathcal{S}'\}$ is bounded, and each $E_0 \in \mathcal{S}'$ is necessarily PT-semistable, the set $\mathcal{S}'$ itself is bounded.  Exactly the same proof as in Proposition \ref{pro-openness} would then yield the result.
\end{proof}

\subsection{Separatedness for PT-Stable Objects}

Note that, for objects $F, G \in D^b(X_R)$, the $R$-module $\Hom_{D^b(X_R)} (F,G)$ is finitely generated and
\begin{equation}\label{HomX_RX_K}
   \Hom_{D^b(X_R)}(F,G) \otimes_R K \cong \Hom_{D^b(X_K)} (F \otimes_R K, G\otimes_R K)
\end{equation}
as in \cite[Lemma 3.18]{TodaLSOp}.  Since $\Coh (Y)$ is a full subcategory of $D^b(Y)$ for $Y=X_K$ or $X_R$ (being the heart of the standard t-structure), when $F,G \in \Coh (X_R)$, the above isomorphism becomes
\begin{equation*}
   \Hom_{\Coh(X_R)}(F,G) \otimes_R K \cong \Hom_{\Coh(X_K)} (F \otimes_R K, G \otimes_R K).
\end{equation*}

\begin{pro}[Valuative criterion for separatedness]\label{pro-separatedness}
Let $X$ be a smooth projective three-fold over $k$, and $R$ an arbitrary DVR over $k$.
Let $E_1, E_2 \in D^b(X_R)$ be flat families of objects in $\Ap$ over $\Spec R$, and suppose $j^\ast E_1 \cong j^\ast E_2$ in $D^b(X_K)$.  Suppose both $L\iota^\ast E_1, L\iota^\ast E_2$ are PT-semistable objects in $\Ap (X_k)$ and at least one of them is PT-stable.  Then the isomorphism $j^\ast E_1 \cong j^\ast E_2$ extends to an isomorphism $E_1 \cong E_2$.
\end{pro}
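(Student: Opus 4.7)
The plan is to mimic the classical Langton valuative criterion for separatedness: lift the generic isomorphism to a morphism $\psi\colon E_1\to E_2$ over $X_R$ that is ``primitive'' (not divisible by $\pi$), then use PT-stability on the central fibre to show $L\iota^\ast\psi$ is itself an isomorphism, and finally deduce $\psi$ is an isomorphism by a Nakayama-type argument. The advantage of working inside Lieblich's category of universally gluable complexes is that the base-change identity \eqref{HomX_RX_K} still makes $\Hom_{D^b(X_R)}(E_1,E_2)$ into a finitely generated $R$-module whose $K$-localisation is $\Hom_{D^b(X_K)}(j^\ast E_1,j^\ast E_2)$.

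First I would fix the given isomorphism $\phi_K\colon j^\ast E_1\overset\sim\to j^\ast E_2$ and, using \eqref{HomX_RX_K}, write $\phi_K=\pi^{-n}\cdot (j^\ast\psi)$ for some $\psi\in\Hom_{D^b(X_R)}(E_1,E_2)$ with $n\geq 0$ chosen minimal; equivalently, $\psi$ is not in the image of multiplication by $\pi$ on $\Hom_{D^b(X_R)}(E_1,E_2)$. Since $\pi^n$ is a unit in $K$, $j^\ast\psi$ is an isomorphism. Next I would verify that $L\iota^\ast\psi\neq 0$: applying $\Hom(E_1,-)$ to the triangle $E_2\xrightarrow{\pi} E_2\to \iota_\ast L\iota^\ast E_2\to E_2[1]$ (which is valid since $E_2$ is $R$-flat) and using the adjunction $\Hom_{X_R}(E_1,\iota_\ast L\iota^\ast E_2)\cong\Hom_{X_k}(L\iota^\ast E_1,L\iota^\ast E_2)$ gives a long exact sequence in which the vanishing of $L\iota^\ast\psi$ would force $\psi\in \pi\cdot\Hom(E_1,E_2)$, contradicting the choice of $n$.

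The main step is to show that $L\iota^\ast\psi\colon L\iota^\ast E_1\to L\iota^\ast E_2$ is an isomorphism in $\Ap(X_k)$. The two central fibres share a Chern character (hence phase), are both PT-semistable, and one of them, say $L\iota^\ast E_2$, is PT-stable. Let $I=\image(L\iota^\ast\psi)$ in $\Ap$. Being a quotient of the semistable $L\iota^\ast E_1$ forces $\phi(I)\succeq \phi(L\iota^\ast E_1)$, while being a nonzero subobject of the stable $L\iota^\ast E_2$ of the same phase forces $I=L\iota^\ast E_2$. Hence $L\iota^\ast\psi$ is surjective in $\Ap$, and its kernel $K$ satisfies $ch(K)=ch(L\iota^\ast E_1)-ch(L\iota^\ast E_2)=0$. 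Using Proposition \ref{pro-PTsscharacterisation} (torsion-freeness of $H^{-1}(L\iota^\ast E_1)$ and the fact that $H^0(L\iota^\ast E_1)$ is 0-dimensional), rank zero forces $H^{-1}(K)=0$, and the remaining sheaf $K=H^0(K)$ in $\Coh_{\leq 1}(X_k)$ with $ch_2=ch_3=0$ must vanish. The symmetric argument (using $\phi\succ$ on proper quotients of the stable object) handles the case where $L\iota^\ast E_1$ is the stable one.

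To finish, set $C:=\cone(\psi)\in D^b(X_R)$. From $j^\ast\psi$ and $L\iota^\ast\psi$ being isomorphisms, $j^\ast C=0$ and $L\iota^\ast C=0$. I would then conclude $C=0$ by a standard Nakayama argument on the lowest non-vanishing cohomology sheaf: if $i$ is minimal with $H^i(C)\neq 0$, the Tor spectral sequence for $C\otimes^L_R k$ gives $H^i(C)\otimes_R k=0$, i.e.\ $\pi H^i(C)=H^i(C)$, while $j^\ast C=0$ makes $H^i(C)$ a $\pi$-power-torsion coherent sheaf on $X_R$; these two conditions together force $H^i(C)=0$, a contradiction. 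Hence $\psi\colon E_1\to E_2$ is an isomorphism. The most delicate point is the third paragraph, i.e.\ ruling out a nontrivial kernel/cokernel on the central fibre; everything else is formal manipulation of the standard triangles. Note that completeness of $R$ is never used, so the argument applies to an arbitrary DVR.
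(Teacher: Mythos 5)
Your proposal is correct and follows essentially the same route as the paper: lift the generic isomorphism to a morphism over $X_R$ that does not vanish on the central fibre, use the phase comparison between the image of the central-fibre map and the (semi)stable fibres of equal Chern character to conclude that map is an isomorphism in $\Ap(X_k)$, and then kill the cone by a Nakayama-type argument (the paper cites \cite[Lemma 2.1.4]{Lieblich} and \cite[Lemma 3.18]{TodaLSOp} for the steps you prove by hand). The only cosmetic difference is that the paper concludes directly that an injection (resp.\ surjection) in $\Ap$ between objects of equal Chern character is an isomorphism, whereas you pass through the vanishing of the kernel via its Chern character; both are fine.
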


\begin{proof}
Let $f : j^\ast E_1 \to j^\ast E_2$ be the given isomorphism in $D^b(X_K)$.  Then there exists an integer $m$ such that $\pi^mf$ extends to a morphism $\pi^mf : E_1 \to E_2$ on $X_R$, and $L\iota^\ast (\pi^mf) \neq 0$ by \cite[Lemma 3.18]{TodaLSOp}.  That is, $G:=\image (L\iota^\ast (\pi^mf)) \neq 0$.

If $L\iota^\ast E_1$ is stable, and $\kernel (L\iota^\ast (\pi^mf)) \neq 0$, then $\phi (L\iota^\ast E_1) \prec \phi (G)$ but $\phi (G) \preceq \phi (L\iota^\ast E_2)$, which is impossible since $\phi (L\iota^\ast E_1)=\phi (L\iota^\ast E_2)$ (since $L\iota^\ast E_1, L\iota^\ast E_2$ have the same Chern character).  So $\kernel (L\iota^\ast (\pi^mf))=0$, meaning $L\iota^\ast (\pi^mf)$ is an injection between objects in $\Ap$ of the same Chern character on $X_k$, and is necessarily an isomorphism.

If $L\iota^\ast E_2$ is stable, and $\cokernel (L\iota^\ast (\pi^mf)) \neq 0$, then  $\phi (L\iota^\ast E_1) \preceq \phi (G)$ while $\phi (G) \prec \phi (L\iota^\ast E_2)$, again a contradiction.  So $\cokernel (L\iota^\ast (\pi^mf))$ must be 0, i.e.\ $L\iota^\ast (\pi^mf)$ is a surjection between objects in $\Ap$ of the same Chern character, and is necessarily an isomorphism.

In any case, $\pi^m f : E_1 \to E_2$ restricts to an isomorphism on $X_k$.  Let $M$ be the homotopy kernel of $\pi^m f$, so we have an exact triangle in $D^b(X_R)$
\[
  M \to E_1 \overset{\pi^m f}{\to} E_2 \to M[1].
\]
That $L\iota^\ast (\pi^m f)$ is an isomorphism means $L\iota^\ast M = 0$.  And this implies $M$ itself is zero by \cite[Lemma 2.1.4]{Lieblich}.  That is, $\pi^m f$ is an isomorphism.
\end{proof}

\section{Moduli Spaces of PT-Semistable Objects}\label{section-modulidef}


By Lieblich's result \cite[Corollary A.4]{BSMSKTS}, there is an Artin stack locally of finite presentation
\[
\mathscr{D}_{\Ap} (X) \to \Spec k
\]
such that, for any scheme $B$ over $\Spec k$, the fibre of the stack over $B$ is the category of complexes $E \in D^b(X_B)$ such that for each  point $b \hookrightarrow B$, the fibre $E|_b$ obtained by derived pullback lies in the heart $\Ap (X_b)$.  If we fix a Chern character $ch=(-r,-d,\beta,n)$, then we have a closed substack $\mathscr{D}_{\Ap}^{ch} (X)$ whose fibres are categories of families of complexes in $\Ap$ with the prescribed Chern character.  Then, by openness of PT-semistability (Proposition \ref{pro-openness}) and openness of PT-stability (Proposition \ref{pro-opennessofPTstab}), we have open substacks
\begin{equation*}
\mathscr{D}_{\Ap}^{ch, PTs} (X) \subset \mathscr{D}_{\Ap}^{ch, PTss} (X)  \subset  \mathscr{D}_{\Ap}^{ch} (X)
\end{equation*}
whose fibres are categories of families of PT-stable and PT-semistable objects, respectively, with Chern character $ch$.

By boundedness of PT-semistable objects \cite[Proposition 3.4]{Lo1}, both the moduli stacks $\mathscr{D}_{\Ap}^{ch, PTs}(X)$ and $\mathscr{D}_{\Ap}^{ch, PTss} (X)$ are Artin stacks of finite type.  Theorem \ref{theorem-prelimcompletenessofPTss} says that the stack $\mathscr{D}_{\Ap}^{ch, PTss} (X)$ is universally closed, while $\mathscr{D}_{\Ap}^{ch, PTs}(X)$ is separated by Proposition \ref{pro-separatedness}.

 When there are no strictly semistable objects, such as when $r,d$ are coprime, we have that $\mathscr{D}_{\Ap}^{ch, PTss} (X)=\mathscr{D}_{\Ap}^{ch, PTs} (X)$ is a proper Artin stack of finite type.  Alternatively,  as in \cite{TodaLSOp}, we can consider the moduli functor from schemes over $\Spec k$ to sets
\[
\MM : (\text{Sch}/\Spec k) \to (\text{Sets})
\]
that takes a scheme $B$ over $\Spec k$ to the set of complexes $E \in D^b(X_B)$ such that, for each  point $b \in B$, the fibre $E|_b$ obtained by derived pullback satisfies $\Hom (E|_b,E|_b)=k$ and $\Ext^{-1}(E|_b, E|_b) = 0$.  Inaba \cite[Theorem 0.2]{Inaba} showed that the \'{e}tale sheafification of this functor is represented by a locally separated algebraic space, in the sense of \cite{Knutson}.  Fixing a Chern character $ch$, we obtain a subfunctor $\MM^{\text{\'{e}t}}_{ch,PTs}$ that takes a scheme $B$ to the set of families of PT-stable objects over $B$ with Chern character $ch$.  As in the previous paragraph, by our results on boundedness, openness, separatedness (Proposition \ref{pro-separatedness}) and universal closedness, $\MM^{\text{\'{e}t}}_{ch,PTs}$ is a proper algebraic space of finite type.

This completes the proof of Theorem \ref{theorem-main}.


\begin{thebibliography}{99}
\bibitem[AP]{AP} D. ~Abramovich and A ~Polishchuk, \emph{Sheaves of t-structures and valuative criteria for stable complexes}, J. Reine Angew. Math., Vol. 590, pp. 89-130, 2006.

\bibitem[ABL]{BSMSKTS} D. ~Arcara, A. ~Bertram and M. ~Lieblich, \emph{Bridgeland-stable moduli spaces for K-trivial surfaces}, 	 arXiv:0708.2247v1  [math.AG].

\bibitem[Bay]{BayerPBSC} A. ~Bayer, \emph{Polynomial Bridgeland stability conditions and the large volume limit}, Geom. Topol., Vol. 13, pp. 2389-2425, 2009.

\bibitem[BV]{BV} A. I. ~Bondal and M. Van den Bergh, \emph{Generators and representability of functors in commutative and noncommutative geometry}, Mosc. Math. J., Vol. 3 (1), pp. 1-36, 2003.

\bibitem[GM]{GM} S. I. ~Gelfand and Y. I. ~Manin, \emph{Methods of Homological Algebra}, Springer, 1989.

\bibitem[HRS]{TACQA} D. ~Happel, I. ~Reiten and S. ~Smal\o , \emph{Tilting in abelian categories and quasitilted algebras}, Mem. Amer. Math. Soc., Vol. 120, 1996.



\bibitem[Huy]{FMTAG} D. ~Huybrechts, \emph{Fourier-Mukai Transforms in Algebraic Geometry}, Oxford Math. Monographs, Oxford Univ. Press, 2006.

\bibitem[HL]{HL} D. ~Huybrechts and M. ~Lehn, \emph{The Geometry of Moduli Spaces of Sheaves}, Aspects of Mathematics, Vol. E31, Vieweg, Braunschweig, 1997.

\bibitem[Ina]{Inaba} M. ~Inaba, \emph{Toward a definition of moduli of complexes of coherent sheaves on a projective scheme}, J. Math. Kyoto Univ., Vol. 42, pp. 317-329, 2002.

\bibitem[KS1]{CS} M. ~Kashiwara and P. ~Schapira, \emph{Categories and Sheaves},  Grundlehren der Mathematischen Wissenschaften, Vol. 332, 2006.

\bibitem[KS2]{SM} M. ~Kashiwara and P. ~Schapira, \emph{Sheaves on Manifolds}, Grundlehren der Mathematischen Wissenschaften, Vol. 292, 1st edit., 1990.

\bibitem[Knu]{Knutson} D. ~Knutson, \emph{Algebraic Spaces}, Lecture Notes in Mathematics, Vol. 203, Springer-Verlag, 1971.

\bibitem[Lan]{Langton} S. ~Langton, \emph{Valuative criteria for families of vector bundles on algebraic varieties}, Ann. of Math., Vol. 101 (2), pp. 88-110, 1975.

\bibitem[Lo]{Lo1} J. ~Lo, \emph{Moduli of PT-Semistable Objects I},	 arXiv:1011.5688v1 [math.AG].

\bibitem[LMB]{LMB} G\'{e}rard Laumon and Laurent Moret-Bailly, \emph{Champs alg\'{e}briques}, Ergebnisse der Mathematik und ihrer Grenzgebiete, Volume 39, Springer-Verlag, Berlin, 2000.

\bibitem[Lie]{Lieblich} M. ~Lieblich, \emph{Moduli of complexes on a proper morphism}, J. Algebraic Geom., Vol. 15, pp. 175-206, 2006.

\bibitem[Mar]{MaruBFTFS} M. ~Maruyama, \emph{On boundedness of families of torsion free sheaves}, J. Math. Kyoto Univ., Vol. 21, pp. 673-701, 1981.

\bibitem[Orl]{Orlov} D. Orlov, \emph{Equivalences of derived categories and K3 surfaces}, 	 arXiv:alg-geom/9606006v5.

\bibitem[PT]{PT} R. ~Pandharipande and R. P. ~Thomas, \emph{Curve counting via stable pairs in the derived category}, Invent. Math., Vol. 178, pp. 407-447, 2009.

\bibitem[Pol]{Polishchuk} A. ~Polishchuk, \emph{Constant families of t-structures on derived categories of coherent sheaves}, Mosc. Math. J., Vol. 7, pp. 109-134, 2007.

\bibitem[Pot]{LP} J. ~Le Potier, \emph{Faisceaux semi-stables et syst\`{e}mes coh\'{e}rents}, Proceedings de la Conference de Durham, Cambridge Univ. Press, pp. 179-239, 1995.



\bibitem[Tod1]{TodaLSOp} Y. ~Toda, \emph{Limit stable objects on Calabi-Yau 3-folds}, Duke Math. J.,  Vol. 149, pp. 157-208, 2009.

\bibitem[Tod2]{TodaK3} Y. ~Toda, \emph{Moduli stacks and invariants of semistable objects on K3 surfaces}, Adv. Math., Vol. 217, pp. 2736-2781, 2008.

\end{thebibliography}
\end{document}